\documentclass[11pt,letterpaper]{amsart}
\usepackage{amssymb, latexsym, graphicx, mathrsfs, enumerate, amsmath, amsthm, textcomp, url, tikz-cd, bbm, mathtools, multirow, multicol, bbm, etoolbox}
\usepackage[T1]{fontenc} 
\usepackage{mathdots}
\usepackage{comment}
\usepackage[toc,page]{appendix}
\usepackage[breaklinks=true]{hyperref}
\hypersetup{
    colorlinks=true,
    linkcolor=blue,
    filecolor=magenta,      
    urlcolor=cyan,
}

\tikzset{
    labl/.style={anchor=south, rotate=90, inner sep=.5mm}
}

\usepackage{color}

\input xy
\xyoption{all}

\allowdisplaybreaks
\allowdisplaybreaks

\numberwithin{equation}{section}
\newtheorem{theorem}{Theorem}[section]
\newtheorem{proposition}[theorem]{Proposition}

\newtheorem{lemma}[theorem]{Lemma}
\newtheorem{conjecture}[theorem]{Conjecture}

\newtheorem*{remark*}{Remark}

\theoremstyle{definition}
\newtheorem{definition}[theorem]{Definition}
\newtheorem{example}[theorem]{Example}
\newtheorem{remark}[theorem]{Remark}

\newcommand{\Q}{\mathbb{Q}}

\newcommand{\Z}{\mathbb{Z}}

\newcommand{\Mfo}{\mathcal{O}}

\makeatletter
\def\ops@declare#1{\expandafter\DeclareMathOperator\csname #1\endcsname{#1}}
\def\ops@scan#1,{\ifx#1\relax\let\ops@next\relax\else\ops@declare{#1}\let\ops@next\ops@scan\fi\ops@next}
\newcommand{\DeclareMathOperators}[1]{\ops@scan#1,\relax,}
\makeatother
\DeclareMathOperators{Hom,Nat,Tor,Ann,Aff,Aut,GL,SL,SO,Spec,lt,lp,lc,im,lcm,id,Set,Ab,Grp,Ring,Mod,Top,Vect,Ind,res,Gal,coker,sh,Supp, diag,Stab,inv,Nm,ord,End}
\def\makebb#1{\expandafter\def\csname b#1\endcsname{{\mathbb{#1}}}\ignorespaces}
\def\makebf#1{\expandafter\def\csname bf#1\endcsname{{\mathbf{#1}}}\ignorespaces}
\def\makegr#1{\expandafter\def\csname f#1\endcsname{{\mathfrak{#1}}}\ignorespaces}
\def\makescr#1{\expandafter\def\csname s#1\endcsname{{\mathscr{#1}}}\ignorespaces}
\def\makec#1{\expandafter\def\csname c#1\endcsname{{\mathcal{#1}}}\ignorespaces}
\def\makecal#1{\expandafter\def\csname cal#1\endcsname{{\mathcal{#1}}}\ignorespaces}
\def\doLetters#1{#1A #1B #1C #1D #1E #1F #1G #1H #1I #1J #1K #1L #1M
	#1N #1O #1P #1Q #1R #1S #1T #1U #1V #1W #1X #1Y #1Z}
\def\doletters#1{#1a #1b #1c #1d #1e #1f #1g #1h #1j #1k #1l #1m
	#1n #1o #1p #1q #1r #1s #1t #1u #1v #1w #1x #1y #1z}
\doLetters\makebb
\doLetters\makecal
\doLetters\makec
\doLetters\makescr
\doLetters\makebf
\doletters\makebf
\doletters\makegr

\begin{document}

\title[Functional equation for the $L$-function of a cubic order]{Beyond Endoscopy for $\mathrm{GL}_3(\mathbb{Q})$: Functional equation for the $L$-function of a cubic order}

\keywords{}

\subjclass[2020]{
11F66, 
11F72, 
11R54
}

\author[Yuchan Lee]{Yuchan Lee}
\thanks{This work was supported by the National Research Foundation of Korea (NRF) grant funded by the Korea government (MSIT) (No. RS-2026-25508638)}

\address{Yuchan Lee \\  Department of Mathematics, POSTECH, 77, Cheongam-ro, Nam-gu, Pohang-si, Gyeongsangbuk-do, 37673, KOREA}

\email{yuchanlee329@gmail.com}

\begin{abstract}
The Beyond Endoscopy strategy, proposed by Langlands, aims to establish the principle of functoriality by analyzing the trace formula. 
Recently, Deng and Espinosa advanced this program for $\mathrm{GL}_3(\mathbb{Q})$ by isolating the contribution of the trivial representation from the elliptic regular part. 
Their work relies on a conjectural factorization formula for the  $L$-function associated with a cubic order, which yields the  functional equation for the completed $L$-function. 
In this paper, we provide an unconditional proof of this functional equation for every Gorenstein order in a cubic number field. 
As a consequence, their isolation of the trivial representation for $\mathrm{GL}_3(\mathbb{Q})$ becomes fully unconditional.
\end{abstract}
\maketitle
\tableofcontents

\section{Introduction}
Beyond Endoscopy, proposed by Langlands \cite{Lang04}, is a strategy for establishing the general principle of functoriality. 
It consists of two primary steps. 
The first step, by means of the Arthur-Selberg trace formula, is to isolate packets of cuspidal automorphic representations according to the order of the pole of the associated $L$-functions.
The second step compares these isolated spectral contributions across different reductive groups to establish functorial transfers. 

This strategy was first carried out successfully in the series of papers \cite{Alt1}, \cite{Alt2}, and \cite{Alt3} of Altu\u{g} for $\mathrm{GL}_2(\mathbb{Q})$.
In particular, \cite{Alt1} concerns the first step for $\mathrm{GL}_2(\Q)$. 
Applying the \textit{approximate functional equation} and \textit{Poisson summation}, Altu\u{g} explicitly isolated the contribution of the trivial representation from the elliptic regular part of the trace formula.
Recently, Deng and Espinosa \cite{DE} generalized the results of \cite{Alt1} to $\mathrm{GL}_3(\mathbb{Q})$, conditional on a specific conjecture associated with the $L$-function of a cubic order (see Conjecture \ref{conj of DE}).
This conjecture is needed to yield the necessary functional equation, stated in Theorem \ref{intro:main thm}.

In the present paper, we observe that the full strength of this conjecture is not necessary for their deduction. 
Instead, we provide an unconditional proof of the functional equation for the $L$-function of a cubic order. 
Consequently, the isolation of the contribution of the trivial representation from the elliptic regular part of the trace formula for $\mathrm{GL}_3(\mathbb{Q})$ achieved in \cite{DE} now holds unconditionally.

\subsection{A brief review of Deng and Espinosa's work}
As explained in \cite[Section 1.1]{DE} (see also \cite[Section 1.2]{Alt1}), an essential prerequisite for the first step of Beyond Endoscopy is to analytically manipulate the elliptic regular part to separate and cancel the contribution of the trivial representation. 
More precisely, one seeks a concrete expansion for the following difference:
\begin{equation}\label{eq:LHS for beyond endoscopy}
\mathrm{I}_{\mathrm{ell}}(f)-\mathrm{Tr}(\mathbf{1}(f)),
\end{equation}
for a suitable test function $f$ as given in \cite[Section 2.0.1]{DE}.
Here, $\mathrm{I}_{\mathrm{ell}}(f)$ denotes the contribution from elliptic regular conjugacy classes on the geometric side of the Arthur-Selberg trace formula, 
\[
I_{\mathrm{ell}}(f)
=
\sum_{[\gamma]}\mathrm{vol}(\gamma)\mathcal O_\gamma(f),
\]
where \(\mathcal O_\gamma(f)\) is the orbital integral associated with the conjugacy class $[\gamma]$, and $\mathrm{vol}(\gamma)$ is the corresponding volume factor.
The term $\mathrm{Tr}(\mathbf{1}(f))$ is the contribution of the trivial representation on the spectral side.
We refer to \cite[Section 1.1]{DE} for the precise definitions.

Although a brief strategy for obtaining the concrete expansion of \eqref{eq:LHS for beyond endoscopy} is provided in \cite[Section 1.2]{DE}, we briefly outline the derivation here to clarify the precise role of our main result.
The chosen test function $f$, which depends on a fixed prime $p$ and a positive integer $k$, restricts the summation to conjugacy classes with characteristic polynomials of the form $X^3-aX^2+bX\pm p^k$.
Consequently, this yields the following expression for $\mathrm{I}_{\mathrm{ell}}(f)$:
\[
\mathrm{I}_{\mathrm{ell}}(f)=\sum_{\pm}\sum_{(a,b)\in V_3(\pm)} \mathrm{vol}(\gamma(a,b))\mathcal{O}_{\gamma(a,b)}(f)
\]
where $V_3(\pm)$ is the set of integral pairs $(a,b)$ such that $X^3-aX^2+bX\mp p^k$ is irreducible, and $\gamma(a,b)$ denotes the conjugacy class associated with this cubic polynomial.



For $\mathrm{GL}_2(\Q)$, Altu{\u{g}} reformulated the elliptic regular contribution $\mathrm{I}_{\mathrm{ell}}(f)$ in terms of the arithmetic function \[
L(s,\delta)=\sum_{d^2|\delta}'\frac{1}{d^{2s-1}} \sum_{l=1}^\infty\frac{1}{l^s}\left(\frac{\delta/d^2}{l}\right),
\]
where the $'$ in the sum indicates that it is restricted to $d$ such that $\delta/d^2\equiv 0\textit{ or }1\ (\mathrm{mod}\ 4)$.
Since its completed $L$-function satisfies a functional equation (see \cite[Proposition 3.1]{Alt1}), the \textit{approximate functional equation} becomes applicable.
This, together with the explicit Dirichlet series expansion of $L(s,\delta)$, allows the relevant summation to be extended to the full lattice, making Poisson summation applicable. The resulting identity yields an explicit expansion of $\mathrm{I}_{\mathrm{ell}}(f)-\mathrm{Tr}(\mathbf1(f))$.

To extend Altu\u{g}'s strategy, it is essential to obtain a suitable generalization of $L(s,\delta)$ together with its functional equation.
Arthur \cite{Art18} addressed this problem and proposed that the Dedekind zeta function $J_R(s)$ (see Conjecture \ref{conj of DE}) of an order $R$ introduced in \cite{Yun13} would provide the desired generalization.
This was verified for $\mathrm{GL}_2(\mathbb{Q})$ in \cite{Es23} and \cite{Es26}.
As a consequence, one obtains that $L(s,\mathrm{Tr}(\gamma)^2-4\det(\gamma))$ coincides with $J_{\Z[\gamma]}(s)/\zeta_\mathbb{Q}(s)$.
Then, the functional equation for the completion of $L(s,\mathrm{Tr}(\gamma)^2-4\det(\gamma))$ follows from the functional equation for the completed zeta function associated with $J_{\Z[\gamma]}(s)$ proved in \cite[Theorem 1.2.(1)]{Yun13}.

Motivated by this perspective, Deng and Espinosa introduced the following cubic analogue of $L(s,\delta)$:
\begin{definition}[{\cite[Definition 12]{DE}}]\label{intro:def:L_function_of_order}
Let $R$ be an order in a cubic number field $E$. We define
\begin{equation*}
    L(s, R)=\sum_{R\subset \mathcal{O}\subset\mathcal{O}_E} h_{\mathcal{O}}(s)\, L_\mathcal{O}(s)[\mathcal{O}: R]^{1-2s},
\end{equation*}
where the summation runs over all overorders $\mathcal{O}$ of $R$, and the terms $L_\mathcal{O}(s)$ and $h_\mathcal{O}(s)$ are defined in Definitions \ref{def:term L}-\ref{def:term h}, respectively.
We define its completion as
\begin{equation}\label{intro:definition of complete Lfunction for R}
    \Lambda(s, R)=\frac{\Gamma_{R,\infty}(s)}{\pi^{-s/2}\Gamma(s/2)}L(s,R),
\end{equation}
where $D_R$ is the absolute discriminant of $R$ and
\begin{equation*}
    \Gamma_{R,\infty}(s)=D_{R}^{s/2}(\pi^{-s/2}\Gamma(s/2))^{r_1}((2\pi)^{1-s}\Gamma(s))^{r_2}.
\end{equation*}
\end{definition}
This definition is designed to retain an explicit Dirichlet series while providing the expected higher-rank analogue of the relationship between $L(s,\delta)$ and $J_{\mathbb{Z}[\gamma]}(s)$.
This expectation is made precise in the following conjecture.
\begin{conjecture}[{\cite[Conjecture A]{DE}}]\label{conj of DE}
    Let $R$ be a Gorenstein order over $\Z$. Let $R^\vee=\mathrm{Hom}(R,\Z)$, and let\[
J_R(s)=\sum_{M\subset R^\vee} [R^\vee:M]^{-s}\]be Yun's Dedekind zeta function in \cite{Yun13}.    
    Then we have
    \[
    J_R(s)=L(s,R)\zeta_\Q(s).
    \]
\end{conjecture}

Assuming Conjecture \ref{conj of DE}, they proved the functional equation for the completed $L$-function $\Lambda(s,R)$ for each $R=\mathbb{Z}[\gamma(a,b)]$, making it possible to invoke the \textit{approximate functional equation}.
Using 
the explicit Dirichlet series expansion of $L(s,R)$, they 
extend the $(a,b)$-sum in $\mathrm{I}_{\mathrm{ell}}(f)$ to the full lattice $\mathbb{Z}^2$, making it possible to apply \textit{Poisson summation}.
The isolation of the contribution of the trivial representation then follows from an explicit evaluation of the resulting Kloosterman sums.

\subsection{Main Theorem: the functional equation for $L(s,R)$}\label{sec:intro:main thm}
It is worth emphasizing that the primary role of Conjecture \ref{conj of DE} in \cite{DE} is to obtain the functional equation for $L(s,R)$. 
Indeed, the conjecture is used only in \cite[Propositions 19-20]{DE}, where Proposition 20 depends solely on the fact that $L(1,R)$  coincides with $\left.\frac{J_R(s)}{\zeta_\Q(s)}\right|_{s=1}$ - this is verified in \cite[Proposition 18]{DE} (note that this is proven without assuming Conjecture \ref{conj of DE}).
Moreover, the local factor of $J_R(s)$ is expressed as a weighted sum of local integrals over $(R\otimes_\mathbb{Z}\mathbb{Q}_p)^\times$ (see \cite[Lemma 2.10]{Yun13}), making a direct verification of the conjecture technically involved. 
Our approach avoids this difficulty entirely.
Instead, following \cite{CHL2}, we exploit the ideal class monoid structure of $R$ to enumerate the local factor of $L(s,R)$ at each $p$ explicitly as a polynomial in $p^{-s}$.

The main theorem of this paper is the following unconditional functional equation for $\Lambda(s,R)$, corresponding to \cite[Proposition 19]{DE}:
\begin{theorem}[{Theorem \ref{thm:main thm}}]\label{intro:main thm}
    For a Gorenstein order $R$ of a cubic number field $E$, we have
    \[
    \Lambda(s,R)=\Lambda(1-s,R).
    \]
\end{theorem}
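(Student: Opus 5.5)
We reduce the functional equation to a local statement at each prime. Write $L(s,R)=\prod_p L_p(s,R)$ as an Euler product. This is possible because overorders $\mathcal{O}$ of $R$ are determined by their completions $\mathcal{O}_p\supset R_p$, the index $[\mathcal{O}:R]$ factors as $\prod_p[\mathcal{O}_p:R_p]$, and (by Definitions \ref{def:term L}--\ref{def:term h}) $L_\mathcal{O}(s)$ and $h_\mathcal{O}(s)$ are products of local factors. Let $L_{E}(s)=\zeta_E(s)/\zeta_\Q(s)$. For the maximal order the archimedean factor in \eqref{intro:definition of complete Lfunction for R} gives $\Lambda(s,\mathcal{O}_E)=D_E^{s/2}\Gamma_{E,\infty}(s)L_E(s)/\Gamma_\Q(s)$, and this equals $\Lambda(1-s,\mathcal{O}_E)$ by the classical functional equations of $\zeta_E$ and $\zeta_\Q$. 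Since $D_R=D_E[\mathcal{O}_E:R]^2=D_E\prod_p p^{2c_p}$, it suffices to show that for each prime $p$ the normalized local factor
\[
P_p(s):=p^{c_p s}\,\frac{L_p(s,R)}{L_{E,p}(s)}
\]
is a polynomial in $p^{-s},p^{s}$ that is invariant under $s\mapsto 1-s$. Equivalently, writing $X=p^{-s}$, we need the palindromic identity $Q_p(X)=(pX^2)^{c_p}Q_p(1/(pX))$ for $Q_p(X)=L_p(s,R)/L_{E,p}(s)$. At primes where $R_p$ is maximal, $Q_p=1$ and there is nothing to prove.

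For the remaining finitely many primes we follow \cite{CHL2}. First we express $h_\mathcal{O}(s)L_\mathcal{O}(s)/L_{E,p}(s)$ locally as a sum over the ideal class monoid of $R_p$. The idea is that each fractional $R_p$-ideal $I$ (up to $E_p^\times$) has a multiplier ring $\mathcal{O}=\{x: xI\subset I\}$, so the double sum over overorders and classes becomes a sum over isomorphism classes of lattices. The terms $[\mathcal{O}:R]^{1-2s}$ then become index weights. This should identify $Q_p(X)$ with a generating function of the shape $\sum_{[I]} X^{a(I)}p^{b(I)}$ over $R_p$-ideal classes $I$ normalized by $R_p\subset I\subset \mathcal{O}_{E_p}$. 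Here $a(I)$ and $b(I)$ are explicit index invariants such as $[I:R_p]$ and $[\mathcal{O}(I):R_p]$.

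The symmetry then comes from duality. Because $R_p$ is Gorenstein, $R_p^\vee$ is invertible, and $I\mapsto I^\ast=(R_p:I)$ is an involution on ideal classes preserving multiplier rings. Moreover $[R_p^\vee: I^\vee]=[I:R_p]$-type index relations hold, with $I^\vee = I^\ast R_p^\vee$. Under this involution we expect $a(I^\ast)=2c_p-a(I)$ with the compensating power of $p$, which gives the palindromic identity term by term. As a cross-check, and for the cases the involution does not make transparent, we would enumerate the Gorenstein cubic $R_p$ by splitting type of $E_p$ (three unramified types, partially ramified, totally ramified) and by the conductor/index $c_p$. We would then compute $Q_p$ explicitly as a polynomial in $p^{-s}$, using the classification of overorders of a cubic local order, which form a short chain or small lattice indexed by $c_p$ for Gorenstein orders.

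The main obstacle is the second step: correctly matching the weights $h_\mathcal{O}(s)$ from Definition \ref{def:term h} with counts of ideal classes having given multiplier ring, including the unit-index corrections. Once $Q_p$ is written as a lattice generating function with the correct weights, the Gorenstein duality argument is formal. I would first test the identity on the monogenic orders $\Z_p[\gamma]$ with small $c_p$ to calibrate the weights before attempting the general bijective argument.
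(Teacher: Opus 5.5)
Your reduction is correct and is the same as the paper's. Your $P_p(s)$ is exactly $\tilde L_p(s,R)$ of Definition \ref{def:local factor of L function}, and the passage from $\Lambda(s,R)$ to local identities is Lemma \ref{lem:definition of tilde L}, Proposition \ref{prop:euler_product} and Remark \ref{rmk:finite alive for local factors}.

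The gap is the local functional equation, which is the whole content of the theorem, and the mechanism you propose for it does not work. Take $\sigma_p(E)=(3)$ and $R_p=\mathbb{Z}_p[px]$.
\begin{itemize}
\item Its overorders are $R_p$, $\mathcal{O}'=\mathbb{Z}_p+p\mathcal{O}_{E_p}$ (not Gorenstein) and $\mathcal{O}_{E_p}$.
\item It has exactly four ideal classes: $R_p$, $\mathcal{O}'$, $(\mathcal{O}')^\vee$ and $\mathcal{O}_{E_p}$.
\item Yet
\[
Q_p(X)=(1+X+X^2)(1+pX^2+p^2X^3)+p^3X^6
\]
has ten distinct monomials, so it is not a one-monomial-per-class sum $\sum_{[I]}X^{a(I)}p^{b(I)}$.
\end{itemize}
More fundamentally, the symmetry does not respect multiplier rings. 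In $\tilde L_p(s,R)$ the contribution of the overorder $R_p$ is $p^{3s}+p^{2s}+p^{s}$. Under $s\mapsto 1-s$ it goes to $p^{3-3s}+p^{2-2s}+p^{1-s}$, whose monomials come from $\mathcal{O}_{E_p}$ and $\mathcal{O}'$. Since $I\mapsto (R_p:I)$ preserves multiplier rings, no term-by-term pairing through it can produce the identity.

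Gorenstein duality does give Yun's functional equation for $J_{R_p}$. There, however, each class contributes a Tate-type local zeta integral whose symmetry is Fourier-analytic, not a monomial bijection. Identifying that sum with $L_p(s,R)\zeta_{\mathbb{Q}_p}(s)$ is precisely the local form of Conjecture \ref{conj of DE}, which the paper is built to bypass. You flag the weight-matching as ``the main obstacle''; it is not a calibration issue but the entire missing argument.

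What remains is your fallback, explicit enumeration by splitting type, and that is where the proof actually lives. You do not carry it out, and your description of it is inaccurate. The overorders of a Gorenstein cubic $R_p$ are not a short chain or small lattice: they are a non-totally-ordered family parametrized by $c$ modulo powers of $p$, with many non-Gorenstein members carrying $h=1+p^{1-s}$, and with residue fields depending on the overorder.

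The paper proceeds in three steps.
\begin{enumerate}
\item It takes the classification from \cite{CHL2} for each splitting type, with the subcases $S(R_p)\equiv 0,1 \pmod 3$ for $(1^3)$ and $a(R_p)$ even or odd for $(1\ 1^2)$.
\item It rewrites $\tilde L_p(s,R)$ in structured form by matching the exponent multisets $(A,B)$ of monomials $p^{A(1-2s)+B(2-3s)}$.
\item It proves the symmetry using the double-sum identity of Lemma \ref{lem:double sum formula for nonsplit case} when $E$ is irreducible at $p$. When $E$ splits at $p$, it uses the Key Equation \eqref{eq:Key_equation}, with algebraic reductions for $(1\ 2)$ and $(1\ 1\ 1)$ and a separate computation for $(1\ 1^2)$ with odd $a(R_p)$.
\end{enumerate}
Without these computations, your proposal establishes only the reduction to local factors.
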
\noindent
Therefore, our result makes the main result of \cite{DE}, namely the isolation of $\mathrm{Tr}(\mathbf{1}(f))$ from $\mathrm{I}_{\mathrm{ell}}(f)$, unconditional by removing the assumption that Conjecture \ref{conj of DE} holds.

\subsection{The strategy of the proof}
We first reduce the proof of the functional equation for $\Lambda(s,R)$ to that for a more manageable function $\tilde{L}(s,R)$, introduced in Lemma \ref{lem:definition of tilde L}.
By Proposition \ref{prop:euler_product}, $\tilde{L}(s,R)$ admits the Euler product
\[
\tilde{L}(s,R)=\prod_p \tilde{L}_p(s,R).
\]
Moreover, by Remark \ref{rmk:finite alive for local factors}, we have $\tilde{L}_p(s,R)=1$ for all but finitely many primes $p$.
Therefore, it suffices to prove the local functional equation for each local factor $\tilde{L}_p(s,R)$, which depends only on the localized order $R_p=R\otimes_\mathbb{Z}\mathbb{Z}_p$.

In the case that $R_p$ is a quadratic $\mathbb{Z}_p$-order, then $R_p$ is a Bass order, i.e., every overorder of $R_p$ is Gorenstein, by \cite{CHL}.
Moreover, Example \ref{eg:when n=2} shows that the set of overorders of $R_p$ is totally ordered and 
\begin{equation}\label{eq:L function in 2}
\tilde{L}_p(s,R)= p^{S(R_p)s}\Bigg(p^{S(R_p)(1-2s)}+\sum_{i=1}^{S(R_p)}\left(1-\frac{\chi_E(p)}{p^s}\right)p^{(1-2s)(S(R_p)-i)}\Bigg)
\end{equation}
where $\chi_E(p)=\left\{\begin{array}{l l}
    -1 & p\textit{ is inert in $E$};  \\
    0& p\textit{ ramifies in $E$}; \\
    1  & p\textit{ splits in $E$}.
\end{array}\right.$
On the other hand, when $R_p$ is a cubic order, the set of overorders is no longer totally ordered (see \cite[Examples 4.8 and 4.14]{CHL2}) and not every overorder of $R_p$ is Gorenstein.
Consequently, this makes the enumeration of $\tilde{L}_p(s,R)$ complicated.

To overcome this difficulty, we propose candidate expressions motivated from the right-hand side of \eqref{eq:L function in 2}.
Theorems \ref{thm:reformulation_for_unram}, \ref{thm:reformulation_for_ram}, \ref{thm:reformulation_for_unramlin}, \ref{thm:reformulation_for_ramlin}, and \ref{thm:reformulation_for_split} formulate these candidates and prove that they coincide with the corresponding local factors, according to the splitting type of $E$ at $p$ (see \eqref{eq:splittingtype}).
The proof proceeds by rewriting both $\tilde{L}_p(s,R)$ and the corresponding candidate expression as sums of monomials of the form $p^{A(1-2s)+B(2-3s)}$,
and then showing that the associated multisets of exponent pairs $(A,B)$ coincide exactly.

With these explicit formulas in hand, we obtain the local functional equations depending on the splitting type of $E$ at $p$. For primes where $E$ is irreducible, the functional equation is proved by applying a combinatorial identity for finite double sums (see Lemma \ref{lem:double sum formula for nonsplit case}). 
For primes where $E$ splits, we first construct a central algebraic identity for the base splitting type $(1\ 1^2)$ (see \eqref{eq:Key_equation}), and then deduce the functional equations for the remaining split cases simultaneously through  algebraic reductions.



\vspace{0.5em}\noindent
\textbf{Acknowledgment.} 
The author would like to express gratitude to Sug Woo Shin for introducing the concept of Beyond Endoscopy and inspiring interest in related topics. 
The author would also like to thank Sungmun Cho and Jungtaek Hong for various discussions.

\section{Notation and Preliminaries}

Let $Z=\mathbb{Z}$ or $\mathbb{Z}_p$, and let $Q$ be its field of fractions, so that $Q=\mathbb{Q}$ or $\mathbb{Q}_p$, respectively.
Let $K$ be an \'etale $Q$-algebra.
\begin{itemize}
\item An order of $K$ is a subring $\Mfo$ of $K$ such that $\Mfo$ is a finitely generated $Z$-module containing $Z$ and such that $\Mfo\otimes_ZQ\cong K$. 
     An order $\Mfo'$ of $K$ is called an overorder of $\Mfo$ if $\Mfo \subset \Mfo'$. 

\item The maximal order of $K$ is denoted by $\mathcal{O}_K$. 
The existence and uniqueness of $\Mfo_K$ are explained in \cite[the first paragraph of Section 2]{Ma20} or \cite[the third paragraph of Section 2.2]{Ma24}.
Note that if $K$ is a field, then its maximal order $\mathcal{O}_K$ is the ring of integers of $K$.
More generally, since $K$ is an \'etale $\mathbb{Q}$-algebra, it decomposes as a finite product of fields
$K=\prod_i K_i$, and accordingly its maximal order decomposes as
$\mathcal{O}_K=\prod_i\mathcal{O}_{K_i}$.
     
\item    
 A fractional $\Mfo$-ideal $I$ is a finitely generated $\Mfo$-submodule of $K$ such that $I\otimes_ZQ\cong K$. 
 \item 
 The ideal class monoid of $\mathcal{O}$ is defined to be the monoid of equivalence classes of fractional $\mathcal{O}$-ideals up to multiplication by an element of $K^\times$ (see \cite{CHL} or \cite{CHL2}).
Note that fractional ideals of $\mathcal{O}$ need not be invertible unless $\mathcal{O}$ is maximal, which naturally yields a monoid structure rather than a group.

\item The conductor $\mathfrak{f}(\Mfo)$ of an order $\Mfo$ is the largest ideal of $\Mfo_K$ which is contained in $\Mfo$. In other words, $\mathfrak{f}(\Mfo)=\{a\in \Mfo_K\mid a\Mfo_K\subset \Mfo\}$.

\item For $e_1,\cdots, e_n\in \mathcal{O}_K$, we denote by $Z\langle e_1,\cdots ,e_n\rangle$  the $Z$-linear span of $\{e_1,\cdots, e_n\}$ in $\mathcal{O}_K$.

\item For orders $\mathcal{O}_1$ and $\mathcal{O}_2$ in $K$ such that $\mathcal{O}_2\subset \mathcal{O}_1$,
we denote their index by $[\mathcal{O}_1:\mathcal{O}_2]=\#(\mathcal{O}_1/\mathcal{O}_2)$.
\end{itemize}

\vspace{0.5em}\noindent
In particular, for a number field $E$ and an order $R$ of $E$, we define the following notions.
\begin{itemize}
    \item 
Let $R_p\cong R\otimes_\Z\Z_p$ be the $p$-adic completion of $R$, and let $E_p\cong E\otimes_{\Q}\Q_p$ (or equivalently, $E_p\cong R_p\otimes_{\mathbb{Z}_p}\Q_p$) be the ring of fractions of $R_p$.
\item For an overorder $\mathcal{O}$ of $R$, note that $[\mathcal{O}:R]=\prod_p[\mathcal{O}_p:R_p]$ (so that $[\mathcal{O}_p:R_p]=1$ for all but finitely many primes $p$).
We denote by $S(R_p)$ the exponent of $p$ in $[\mathcal{O}_{E_p}:R_p]$, so that $[\mathcal{O}_E:R]=\prod_p p^{S(R_p)}$. 
We refer to this relation as the local-global index formula.
\item
The trace pairing $(x,y)\mapsto \mathrm{Tr}_{E/\Q}(xy)$ allows us to identify $E$ and the $\Q$-linear dual of $E$. 
In particular, the dual $R^\vee=\mathrm{Hom}_\Z(R,\Z)$ can also be viewed as a fractional ideal in $E$ which contains $R$.
The absolute discriminant $D_R$ of $R$ is defined to be $[R^\vee:R]$ (see \cite[Section 3.2]{Yun13}).
\end{itemize}

\section{$L$-function for an order}
In this section, we introduce the notion of the $L$-function for an  order.
While this framework applies generally, we focus primarily on the case of cubic orders, adapting the formulation established in \cite{DE}.
\begin{definition}\label{def:term L}
Let $p$ be a prime, and let $\mathcal{O}$ be an order in an \'{e}tale algebra over $\Q$. 
Let $\{\mathfrak{m}_i\}_{i=1}^r$ be the maximal ideals of $\mathcal{O}_{p}$, and let $k_i=\mathcal{O}_p/\mathfrak{m}_i$ be the corresponding residue fields. 
\begin{enumerate}
    \item (\cite[Definition 8]{DE}) We define
\[\zeta_{\mathcal{O}_p}(s)\;=\;\prod_{i=1}^r \frac{1}{1-|k_i|^{-s}},\text{ and }
\zeta_{\mathcal{O}}(s)=\prod_{p}\zeta_{\mathcal{O}_p}(s).\]
\item(\cite[Definition 9]{DE}) We define
\[
L_{\mathcal{O}_p}(s) =\frac{\zeta_{\mathcal{O}_p}(s)}{\zeta_{\Q_p}(s)},\text{ and }
L_{\mathcal{O}}(s)=\prod_{p}L_{\mathcal{O}_p}(s).
\]
\end{enumerate}
\end{definition}

\begin{definition}[{\cite[Definition 10]{DE}}]\label{def:term h}
Let $p$ be a prime. Given an order $\mathcal{O}$ in an \'{e}tale algebra over $\Q$, we define 
\[
h_{\mathcal{O}_p}(s)=\begin{cases}
1, &\text{\, if $\mathcal{O}_p$ is Gorenstein. }\\
1+p^{1-s},  &\text{\, otherwise. }\\
\end{cases}
\]  
Globally, we define the multiplicative function 
\begin{equation*}
    h_{\mathcal{O}}(s)=\prod_{p}h_{\mathcal{O}_p}(s).
\end{equation*}
\end{definition}
Based on the above two definitions, we define the $L$-function of an order $R$ in a number field $E$.
Here, we recall that the maximal order $\mathcal{O}_E$ in $E$ coincides with the ring of integers of $E$. 
\begin{definition}[{\cite[Definition 12]{DE}}]\label{def:L_function_of_order}
Let $R$ be a $\Z$-order inside the maximal order $\mathcal{O}_E$. We define
\begin{equation*}
    L(s, R)=\sum_{R\subset \mathcal{O}\subset\mathcal{O}_E} h_{\mathcal{O}}(s)\, L_\mathcal{O}(s)[\mathcal{O}: R]^{1-2s},
\end{equation*}
where the summation runs over all overorders $\mathcal{O}$ of $R$.
We define its completion as
\begin{equation}\label{eq:definition of complete Lfunction for R}
    \Lambda(s, R)=\frac{\Gamma_{R,\infty}(s)}{\pi^{-s/2}\Gamma(s/2)}L(s,R),
\end{equation}
where $D_R$ is the absolute discriminant of $R$ and
\begin{equation}\label{eq:Gamma factor for R}
    \Gamma_{R,\infty}(s)=D_{R}^{s/2}(\pi^{-s/2}\Gamma(s/2))^{r_1}((2\pi)^{1-s}\Gamma(s))^{r_2}.
\end{equation}
\end{definition}

The main goal of this paper is to unconditionally obtain the functional equation for $\Lambda(s,R)$ where $R$ is a cubic order:
\[
\Lambda(s,R)=\Lambda(1-s,R).
\]
In practice, we will work with a more manageable function. The following lemma justifies this reduction by allowing us to study this modified function instead of $\Lambda(s,R)$.

\begin{lemma}\label{lem:definition of tilde L}
    To prove the functional equation for $\Lambda(s,R)$, it suffices to show that
    \[
    \tilde{L}(s,R)=\tilde{L}(1-s,R)
    \]where $\tilde{L}(s,R)= [\mathcal{O}_E:R]^sL(s,R)\frac{\zeta_\mathbb{Q}(s)
    }{\zeta_E(s)}$. 
\end{lemma}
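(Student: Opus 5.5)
Write $\Lambda(s,R)$ in terms of $\tilde L(s,R)$ and check that all the remaining factors form something already known to be invariant under $s\mapsto 1-s$. By definition $L(s,R)=\tilde L(s,R)\,[\mathcal{O}_E:R]^{-s}\,\zeta_E(s)/\zeta_\Q(s)$, so
\[
\Lambda(s,R)=\tilde L(s,R)\cdot\frac{D_R^{s/2}[\mathcal{O}_E:R]^{-s}\,(\pi^{-s/2}\Gamma(s/2))^{r_1}((2\pi)^{1-s}\Gamma(s))^{r_2}\zeta_E(s)}{\pi^{-s/2}\Gamma(s/2)\zeta_\Q(s)}.
\]

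The first step is the discriminant identity $D_R=[\mathcal{O}_E:R]^2D_E$, where $D_E=[\mathcal{O}_E^\vee:\mathcal{O}_E]$ is the absolute value of the discriminant of $E$. It follows from the chain $R\subset\mathcal{O}_E\subset\mathcal{O}_E^\vee\subset R^\vee$ together with $[R^\vee:\mathcal{O}_E^\vee]=[\mathcal{O}_E:R]$. This last equality holds because duality with respect to the trace pairing reverses inclusions and preserves indices: $\mathcal{O}_E^\vee/R^\vee$ is the Pontryagin-type dual of $\mathcal{O}_E/R$. Substituting, we get $D_R^{s/2}[\mathcal{O}_E:R]^{-s}=D_E^{s/2}$. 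Hence the factor multiplying $\tilde L(s,R)$ is exactly
\[
\frac{\Lambda_E(s)}{\Lambda_\Q(s)},\qquad \Lambda_E(s)=D_E^{s/2}(\pi^{-s/2}\Gamma(s/2))^{r_1}((2\pi)^{1-s}\Gamma(s))^{r_2}\zeta_E(s),\quad \Lambda_\Q(s)=\pi^{-s/2}\Gamma(s/2)\zeta_\Q(s).
\]

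The second step is to invoke the classical Hecke functional equations $\Lambda_E(s)=\Lambda_E(1-s)$ and $\Lambda_\Q(s)=\Lambda_\Q(1-s)$. The complex gamma factor must be normalized so that it is self-dual. The factor $(2\pi)^{1-s}\Gamma(s)$ differs from the standard $\Gamma_\mathbb{C}(s)=2(2\pi)^{-s}\Gamma(s)$ by the constant $\pi$, which is harmless since constants are symmetric. Thus $\Lambda_E(s)$ is a constant multiple of the standard completed Dedekind zeta function and satisfies $\Lambda_E(s)=\Lambda_E(1-s)$. The ratio $\Lambda_E/\Lambda_\Q$ is therefore invariant under $s\mapsto1-s$, and $\tilde L(s,R)=\tilde L(1-s,R)$ gives $\Lambda(s,R)=\Lambda(1-s,R)$. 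Both sides are meromorphic, so multiplying the identities is legitimate wherever the functions are defined.

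The only point needing care is the index identity $[R^\vee:\mathcal{O}_E^\vee]=[\mathcal{O}_E:R]$. To prove it, choose $\Z$-bases adapted to $R\subset\mathcal{O}_E$ via Smith normal form, so that $R=\bigoplus d_i\Z e_i$ and $\mathcal{O}_E=\bigoplus\Z e_i$. Then the dual bases satisfy $\mathcal{O}_E^\vee=\bigoplus\Z e_i^*$ and $R^\vee=\bigoplus d_i^{-1}\Z e_i^*$, and both indices equal $\prod_i d_i$. The Gorenstein hypothesis is not needed for this reduction.
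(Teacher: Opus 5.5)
Your argument is correct and is essentially the paper's: both use $D_R=[\mathcal{O}_E:R]^2|\Delta_E|$ to identify the factor relating $\Lambda(s,R)$ and $\tilde L(s,R)$ as $\Lambda_E(s)/\Lambda_\Q(s)$, and then invoke $\Lambda_E(s)=\Lambda_E(1-s)$ and $\Lambda_\Q(s)=\Lambda_\Q(1-s)$. The only difference is that you prove the discriminant relation yourself, via the dual chain and Smith normal form, where the paper cites Yun; that proof is fine, though the quotient you describe should be $R^\vee/\mathcal{O}_E^\vee$, not $\mathcal{O}_E^\vee/R^\vee$.
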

\begin{proof}  
Note that the completed Dedekind zeta function $\Lambda_E(s)$ of the number field $E$ is given by
\[
\Lambda_E(s)=|\Delta_E|^{s/2}(\pi^{-s/2}\Gamma(s/2))^{r_1}((2\pi)^{1-s}\Gamma(s))^{r_2}\zeta_E(s)=\frac{\Gamma_{R,\infty}(s)\zeta_E(s)}{[\mathcal{O}_E:R]^s}
\]where $\Delta_E$ denotes the discriminant of $E$.
Here, the second equality follows from the relation $D_{R}=[\mathcal{O}_E: R]^2 |\Delta_E|$ by \cite[Section 3.2]{Yun13}.
We then have
    \[\tilde{L}(s,R)=\frac{[\mathcal{O}_E:R]^s}{\zeta_E(s)}L(s,R)\zeta_\mathbb{Q}(s)=
    \frac{\Gamma_{R,\infty}(s)L(s,R)\zeta_\mathbb{Q}(s)}{ \Lambda_E(s)}.
    \]
    Suppose that the equation $\tilde{L}(s,R)=\tilde{L}(1-s,R)$ holds. 
    Since the completed Dedekind zeta function satisfies $\Lambda_E(s)=\Lambda_E(1-s)$, this directly yields
    \[
     \Gamma_{R,\infty}(s)L(s,R)\zeta_\mathbb{Q}(s)=\Gamma_{R,\infty}(1-s)L(1-s,R)\zeta_{\mathbb{Q}}(1-s).
    \]
    Dividing both sides by the completed Riemann zeta function $\Lambda_\mathbb{Q}(s)=\pi^{-s/2}\Gamma(s/2)\zeta_\Q(s)$, which satisfies $\Lambda_\Q(s)=\Lambda_\Q(1-s)$, we immediately recover the desired functional equation for $\Lambda(s,R)$.
\end{proof}
\begin{proposition}\label{prop:euler_product}
We have the following Euler product expansion:
    \[
    \tilde{L}(s,R)=\prod_{p}p^{S(R_p)s}\sum_{R_p\subset \mathcal{O}_p\subset \mathcal{O}_{E_p}} h_{\mathcal{O}_p}(s)\frac{\zeta_{\mathcal{O}_p}(s)}{\zeta_{E_p}(s)} [\mathcal{O}_p:R_p] ^{1-2s}.
    \]
\end{proposition}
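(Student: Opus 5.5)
The plan is to use the local--global correspondence for overorders: the sum over overorders of $R$ factors as a product over primes of sums over local overorders of $R_p$. Every factor in the summand is then multiplicative across primes.

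\emph{Step 1 (bijection).} The map $\mathcal{O}\mapsto(\mathcal{O}_p)_p$ is a bijection from the overorders $R\subset\mathcal{O}\subset\mathcal{O}_E$ onto families $(\mathcal{O}_p)_p$ with $R_p\subset\mathcal{O}_p\subset\mathcal{O}_{E_p}$ and $\mathcal{O}_p=R_p$ for almost all $p$. Indeed $\mathcal{O}_E/R$ is a finite abelian group, and it decomposes as $\bigoplus_p(\mathcal{O}_E/R)\otimes\Z_p\cong\bigoplus_p\mathcal{O}_{E_p}/R_p$. Intermediate subrings correspond to subgroups of $\mathcal{O}_E/R$ closed under multiplication. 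Every subgroup is the direct sum of its $p$-primary parts, and multiplicative closure can be checked one prime at a time: $\mathcal{O}=\bigcap_p(\mathcal{O}_p\cap E)$, and $\mathcal{O}$ is a ring iff each $\mathcal{O}_p$ is. This gives the bijection. Moreover $\mathcal{O}_p=R_p$ automatically whenever $p\nmid[\mathcal{O}_E:R]$, since then $R_p=\mathcal{O}_{E_p}$.

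\emph{Step 2 (multiplicativity of the summand).} Under this bijection:
\begin{itemize}
\item $[\mathcal{O}:R]=\prod_p[\mathcal{O}_p:R_p]$ by the local-global index formula;
\item $h_{\mathcal{O}}(s)=\prod_p h_{\mathcal{O}_p}(s)$ by Definition \ref{def:term h};
\item $L_{\mathcal{O}}(s)=\prod_p\zeta_{\mathcal{O}_p}(s)/\zeta_{\Q_p}(s)$ by Definition \ref{def:term L}.
\end{itemize}
On the remaining factors, $[\mathcal{O}_E:R]^s=\prod_p p^{S(R_p)s}$, and the Euler products $\zeta_E(s)=\prod_p\zeta_{E_p}(s)$ with $\zeta_{E_p}(s)=\zeta_{\mathcal{O}_{E_p}}(s)$ and $\zeta_\Q(s)=\prod_p\zeta_{\Q_p}(s)$ hold for $\mathrm{Re}(s)>1$. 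The factors $\zeta_{\Q_p}(s)$ then cancel: $\zeta_\Q(s)$ cancels against the denominators of the $L_{\mathcal{O}}$, leaving $\zeta_{\mathcal{O}_p}(s)/\zeta_{E_p}(s)$ in each local factor.

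\emph{Step 3 (interchange sum and product).} Since $\mathcal{O}$ ranges over tuples with independent local coordinates, the global sum over overorders is the product over $p$ of the local sums. Each local sum is finite, and equals $1$ when $R_p=\mathcal{O}_{E_p}$: the only term is then $\mathcal{O}_p=\mathcal{O}_{E_p}$, which is Gorenstein (so $h=1$), has index $1$, and has zeta ratio $1$. Hence the product is effectively finite and no convergence issues arise beyond $\mathrm{Re}(s)>1$. The resulting identity of meromorphic functions then extends by analytic continuation, or is simply read as an identity of finite products times $\zeta$-ratios.

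\emph{Main obstacle.} The only substantive point is Step 1, in particular the correct treatment of $h$ and the Gorenstein property. Here I would check that $\mathcal{O}$ being Gorenstein at $p$ means exactly that $\mathcal{O}_p$ is Gorenstein, which is how Definition \ref{def:term h} is phrased. I would also check that the residue fields of $\mathcal{O}$ above $p$ are the residue fields of $\mathcal{O}_p$, which holds because the maximal ideals of $\mathcal{O}$ over $p$ correspond to the maximal ideals of $\mathcal{O}_p$.
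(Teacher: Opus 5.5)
Your proposal is correct. Its skeleton matches the paper's: a bijection of overorders, local multiplicativity of each factor, then interchanging the finite sum with the product.

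The difference lies in how you justify the bijection $\mathcal{O}\mapsto(\mathcal{O}_p)_p$.
\begin{itemize}
\item The paper passes through the finer decomposition $R_p\cong\bigoplus_{v\mid p}R_v$ over the maximal ideals $v$ of $R$, citing \cite[Remark B.(1)]{CHL}. It then invokes \cite[Corollary 5.5.(1)]{CHL}, which identifies overorders of $R$ with tuples of overorders of the $R_v$.
\item You prove the bijection directly. You use the primary decomposition $\mathcal{O}_E/R\cong\bigoplus_p\mathcal{O}_{E_p}/R_p$ of a finite abelian group. You combine it with the fact that a lattice equals the intersection over $p$ of its $p$-adic completions with $E$, so ring-closure is a local condition.
\end{itemize}
Your route is self-contained and avoids the $v$-level decomposition, which the statement never needs. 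The paper's route is shorter on the page but outsources the content to citations.

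Three small remarks:
\begin{itemize}
\item $\mathcal{O}_E/R$ carries no multiplication, since $R$ is not an ideal of $\mathcal{O}_E$. Phrase the criterion as closure under multiplication of the preimage in $\mathcal{O}_E$, which is what you actually go on to check.
\item Your Step 3 explicitly addresses convergence: after multiplying by $\zeta_{\mathbb{Q}}(s)/\zeta_E(s)$, each global summand is a finite product of local terms. The paper passes over this point silently.
\item The checks you list under ``main obstacle'' are unnecessary. $h_{\mathcal{O}}$ and $\zeta_{\mathcal{O}}$ are defined directly as products of quantities attached to the $\mathcal{O}_p$, so there is nothing to verify there.
\end{itemize}
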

\begin{proof}
    By substituting the definitions of $h_\mathcal{O}(s)$ and $L_\mathcal{O}(s)$ for an overorder $\mathcal{O}$ of $R$, we have
    \[
    L(s,R)=\sum_{R\subset \mathcal{O}\subset \mathcal{O}_E}\prod_{p}h_{\mathcal{O}_p}(s)\frac{\zeta_{\mathcal{O}_{p}}(s)}{\zeta_{\mathbb{Q}_p}(s)}[\mathcal{O}_p:R_p]^{1-2s}.
    \]
    Recalling that $\tilde{L}(s,R)=[\mathcal{O}_E:R]^s L(s,R)\prod_{p}\frac{\zeta_{\mathbb{Q}_p}(s)}{\zeta_{E_p}(s)}$, this yields
    \[
    \tilde{L}(s,R)=[\mathcal{O}_E:R]^s\sum_{R\subset \mathcal{O}\subset \mathcal{O}_E}\prod_{p}h_{\mathcal{O}_p}(s)\frac{\zeta_{\mathcal{O}_p}(s)}{\zeta_{E_p}(s)}[\mathcal{O}_p:R_p]^{1-2s}.
    \]
    For any overorder $\mathcal{O}$ of $R$, we know that $[\mathcal{O}_p:R_p]=1$ for all but finitely many primes $p$.
    Moreover, the local-global index formula yields  $[\mathcal{O}_E:R]=\prod_p p^{S(R_p)}$.
    Therefore, to express the summation over global overorders as an Euler product over $p$, it suffices to show that the following map is bijective:
    \begin{equation}\label{eq:bijection in R}
    \{\textit{overorders of }R\} \rightarrow \prod_{p}\{\textit{overorders of }R_p\},\ \mathcal{O}\mapsto (\mathcal{O}_p)_{p}.
    \end{equation}
    By \cite[Remark B.(1)]{CHL}, we have the algebra decomposition $R_p\cong \bigoplus_{v|p, v\in |R|}R_v$ where $|R|$ denotes the set of maximal ideals of $R$ and $R_v$ is the $v$-adic completion of $R$.
    Since the overorders of a finite direct sum of rings naturally correspond to the direct product of the overorders of its components, this induces a canonical bijection 
    \[
    \prod_p\{\textit{overorders of }R_p\}\cong \prod_p\prod_{v|p, v\in|R|}\{\textit{overorders of }R_v\}=\prod_{v\in |R|}\{\textit{overorders of $R_v$}\},  
    \]where $(\mathcal{O}_p)_p$ maps to  $((\mathcal{O}_p\otimes_{R_p}R_v)_{v|p, v\in|R|})_p$.  
    Finally, \cite[Corollary 5.5.(1)]{CHL} states that the map $\mathcal{O}\mapsto (\mathcal{O}\otimes_R R_v)_{v\in |R|}$ gives a bijection between $\{\textit{overorders of $R$}\}$ and $\prod_{v\in|R|}\{\textit{overorders of $R_v$}\}$.
    Since we have \[\mathcal{O}\otimes_R R_v\cong \mathcal{O}\otimes_R(R_p\otimes_{R_p}R_v)\cong  (\mathcal{O}\otimes_R R_p)\otimes_{R_p}R_v
    \cong \mathcal{O}_p\otimes_{R_p}R_v\] for each $v\in|R|$,
    the map \eqref{eq:bijection in R} is indeed bijective which completes the proof.
\end{proof}

For ease of exposition and subsequent references, we define the local factor of $\tilde{L}_p(s,R)$ as follows.
\begin{definition}\label{def:local factor of L function}
    We define the local factor $\tilde{L}_p(s,R)$ of $\tilde{L}(s,R)$ at a prime $p$ as follows
    \[
    \tilde{L}_p(s,R)=p^{S(R_p)s}\sum_{R_p\subset \mathcal{O}_p\subset \mathcal{O}_{E_p}} h_{\mathcal{O}_p}(s)\frac{\zeta_{\mathcal{O}_p}(s)}{\zeta_{E_p}(s)}[\mathcal{O}_p:R_p]^{1-2s},
    \]so that $\tilde{L}(s,R)=\prod_p \tilde{L}_p(s,R)$.
\end{definition}
\begin{remark}\label{rmk:finite alive for local factors}
    For all but finitely many primes $p$, we have $S(R_p) = 0$, or equivalently, $R_p = \mathcal{O}_{E_p}$. 
    This implies that $\tilde{L}_p(s,R) = 1$. 
Therefore, to verify the functional equation for the global function $\tilde{L}(s,R)$, it suffices to show that the local functional equation $\tilde{L}_p(s,R)=\tilde{L}_p(1-s,R)$ holds for each prime $p$.
\end{remark}
\begin{example}\label{eg:when n=2}
    Suppose that $E$ is a quadratic field.
    Then, any order $R$ in $E$ is a Bass order (i.e., every overorder of $R$ is Gorenstein) by \cite{CHL}.
    Moreover, the set of overorders of $R_p$ is totally ordered:
    \begin{itemize}
        \item
        Suppose that $p$ does not split in $E$. 
        Then $R_p=\mathbb{Z}_p[p^{S(R_p)}x]$ for some element $x\in\mathcal{O}_{E_p}$ such that $\mathcal{O}_{E_p}=\mathbb{Z}_p[x]$. 
        In particular, if $p$ ramifies in $E$, then the element $x$ must be chosen to be a uniformizer of $E_p$. 
        By \cite[Theorem 3.11 and Proposition 3.16]{CHL}, the overorders of $R_p$ are given by the chain:
\[
R_p=\mathbb{Z}_p[p^{S(R_p)}x]
\subset
\mathbb{Z}_p[p^{S(R_p)-1}x]
\subset
\cdots
\subset
\mathbb{Z}_p[x]
=
\mathcal{O}_{E_p}.
\]
        \item Suppose that $p$ splits in $E$. We identify $\mathcal{O}_{E_p}$ with $\mathbb{Z}_p\times \mathbb{Z}_p$ via the isomorphism $E_p\cong \mathbb{Q}_p\times \mathbb{Q}_p$. 
        Under this identification, $R_p$ corresponds to $\mathbb{Z}_p\langle (1,1),(p^{S(R_p)},0) \rangle$.
        By \cite[Theorem 6.11]{CHL}, the overorders of $R_p$ form the chain:
        \[
        R_p=\mathbb{Z}_p\langle (1,1),(p^{S(R_p)},0) \rangle\subset \mathbb{Z}_p\langle (1,1),(p^{S(R_p)-1},0) \rangle\subset  \cdots \subset \mathbb{Z}_p\langle (1,1),(1,0) \rangle=\mathcal{O}_{E_p}.\]
    \end{itemize}
    In either case, $\tilde{L}_p(s,R)$ can be explicitly evaluated as the following polynomial in $p^{-s}$:
    \begin{equation}\label{eq:n=2 L-function in eg}
        \tilde{L}_p(s,R)= p^{S(R_p)s}\Bigg(p^{S(R_p)(1-2s)}+\sum_{i=1}^{S(R_p)}\left(1-\frac{\chi_E(p)}{p^s}\right)p^{(1-2s)(S(R_p)-i)}\Bigg)
\end{equation}
where $\chi_E(p)=\left\{\begin{array}{l l}
    -1 & p\textit{ is inert in $E$};  \\
    0& p\textit{ ramifies in $E$};\\
     1  & p\textit{ splits in $E$}.
\end{array}\right.$
\end{example}
\section{Reformulation of local factor $\tilde{L}_p(s,R)$}\label{sec:reformulation of local factors}
For the remainder of this paper, we fix a cubic number field $E$ and a Gorenstein order $R$ in $E$. 
As we mentioned in the introduction, unlike the case of a quadratic order in Example \ref{eg:when n=2}, the set of overorders is not totally ordered and not every overorder is Gorenstein.
In this section, we reformulate the local function $\tilde{L}_p(s,R)$ of Definition \ref{def:local factor of L function} into an explicit polynomial in $p^{-s}$, which naturally extends the right-hand side of \eqref{eq:n=2 L-function in eg}.
Our enumeration is based on the ideal class monoid structure of $R$ previously investigated by the author in \cite{CHL2}. 

We denote the splitting type $\sigma_p(E)$ of $E$ at prime $p$ as follows:
\begin{equation}\label{eq:splittingtype}
\sigma_p(E)=\left\{
\begin{array}{l l}
(3) &\text{if $E_p/ \mathbb{Q}_p$ is an unramified field extension,  }\\
(1^3) &\text{if $E_p/\mathbb{Q}_p$ is a ramified field extension,}\\
(1\ 2) &\text{if $E_p\cong \mathbb{Q}_p\times E'_p$ with $E'_p/\mathbb{Q}_p$ an unramified quadratic field extension,  }\\
(1\ 1^2) &\text{if $E_p\cong \mathbb{Q}_p\times E'_p$ with $E'_p/\mathbb{Q}_p$ a ramified quadratic field extension, }\\  
(1\ 1\ 1) &\text{if $E_p\cong \mathbb{Q}_p\times \mathbb{Q}_p \times \mathbb{Q}_p$.}
\end{array}
\right.
\end{equation}
If $\sigma_p(E)=(3)$ or $(1^3)$, we say that $E$ is \textit{irreducible} at p. 
Otherwise, we say that $E$ \textit{splits} at p.

Since $R$ is Gorenstein, the corresponding local order $R_p$ is also Gorenstein for each prime $p$ by the second product formula in \cite[Proposition 2.6]{CHL2}.
Furthermore, the following lemma allows us to regard $R_p$ as a simple extension over $\mathbb{Z}_p$ whenever $\sigma_p(E)\neq (1\ 1\ 1)$.
\begin{lemma}
    Suppose that $\sigma_p(E)\neq (1\ 1\ 1)$. $R_p$ is Gorenstein if and only if $R_p$ is a simple extension over $\mathbb{Z}_p$.
\end{lemma}
\begin{proof}
    \cite[Proposition 3.1]{CHL2} directly yields this lemma.
\end{proof}

\subsection{The case where $E$ is irreducible at $p$}
\subsubsection{The case where $\sigma_p(E)=(3)$}
Since $R_p$ is a simple extension over $\mathbb{Z}_p$ and $E_p/\mathbb{Q}_p$ is an unramified field extension, $R_p$ takes the form $\mathbb{Z}_p[p^d x]$ for some integer $d\geq 0$ and some element $x\in E_p$ such that $\mathcal{O}_{E_p}=\mathbb{Z}_p[x]$. 
As $\mathbb{Z}_p$-modules, $\mathcal{O}_{E_p}$ is generated by $\{1,x,x^2\}$ whereas $R_p$ is generated by $\{1,p^d x,p^{2d}x^2\}$. Consequently, we have $S(R_p)=3d$, which implies that $3|S(R_p)$. 

\begin{proposition}\label{prop:first formula for unram}
Suppose that $\sigma_p(E)=(3)$ and let $d=S(R_p)/3$. Then we have 
 \begin{align*}
    \tilde{L}_p(s,R)=p^{3ds}\Big(
    p^{3d(1-2s)}+
    (1+p^{-s}+p^{-2s})P_{(3)}^{3d}(s)
    \Big)
    \end{align*}where
    \begin{align*}
        P_{(3)}^{3d}(s)=&\sum_{1\leq f\leq 2d,\ 2|f}p^{\min(d,f)-\frac{1}{2}f+(3d-\frac{3}{2}f)(1-2s)}+\sum_{1\leq f\leq d,\ 2|f}p^{\frac{1}{2}f-1}p^{(3d-\frac{3}{2}f)(1-2s)}\\
    &+\sum_{1\leq f\leq 2d,}\sum_{\frac{3}{2}f<\alpha \leq f+\min(d,f)}(1+p^{1-s})p^{\min(d,f)-(\alpha-f)+(3d-\alpha)(1-2s)}\\
    &+\sum_{1\leq f\leq d}\sum_{\frac{3}{2}f<\alpha<2f}(1+p^{1-s})p^{2f-\alpha-1}p^{(3d-\alpha)(1-2s)}.
    \end{align*}
\end{proposition}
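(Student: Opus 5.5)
We evaluate the local factor of Definition \ref{def:local factor of L function} directly, by sorting the overorders of $R_p=\mathbb{Z}_p[p^dx]$ according to their index and Gorenstein property. The key simplification is that $\sigma_p(E)=(3)$. In that case every overorder $\mathcal{O}_p$ is a local ring whose residue field is $\mathbb{F}_p$ unless $\mathcal{O}_p=\mathcal{O}_{E_p}$, in which case it is $\mathbb{F}_{p^3}$. Indeed, $\mathcal{O}_p\subsetneq\mathcal{O}_{E_p}=\mathbb{Z}_p[x]$ forces $\mathcal{O}_p\subset\mathbb{Z}_p+p\mathcal{O}_{E_p}$, whose residue field is $\mathbb{F}_p$. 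Since $\zeta_{E_p}(s)=(1-p^{-3s})^{-1}$, this gives
\[
\frac{\zeta_{\mathcal{O}_p}(s)}{\zeta_{E_p}(s)}=\frac{1-p^{-3s}}{1-p^{-s}}=1+p^{-s}+p^{-2s}\qquad(\mathcal{O}_p\neq\mathcal{O}_{E_p}).
\]
The term $\mathcal{O}_p=\mathcal{O}_{E_p}$ contributes $p^{3ds}\cdot p^{3d(1-2s)}$ with $h=1$, which is the isolated summand in the statement. It then remains to show that
\[
\sum_{R_p\subset\mathcal{O}_p\subsetneq\mathcal{O}_{E_p}}h_{\mathcal{O}_p}(s)\,[\mathcal{O}_p:R_p]^{1-2s}=P^{3d}_{(3)}(s).
\]
Writing $[\mathcal{O}_p:R_p]=p^{3d-\alpha}$ with $\alpha$ the exponent of $[\mathcal{O}_{E_p}:\mathcal{O}_p]$, this amounts to counting, for each $\alpha$, the Gorenstein and the non-Gorenstein overorders of $R_p$ of index $p^\alpha$ in $\mathcal{O}_{E_p}$. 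Each non-Gorenstein one carries the weight $1+p^{1-s}$.

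For the enumeration we invoke the classification of overorders of a simple cubic extension from \cite{CHL2}, which uses the ideal class monoid. Every order $\mathcal{O}_p\subset\mathcal{O}_{E_p}$ has the form $\mathbb{Z}_p+L$, where $L\subset p\mathcal{O}_{E_p}$ is a rank-two lattice in $\mathcal{O}_{E_p}/\mathbb{Z}_p$, normalized modulo $\mathbb{Z}_p$. Such an $L$ can be put in a Hermite-type form $\langle p^{a}(x+u),\,p^{b}(x^2+vx+w)\rangle$ subject to closure under multiplication. We parametrize by $f$, the exponent of the largest $p^f$ with $\mathbb{Z}_p+p^f\mathcal{O}_{E_p}\subset\mathcal{O}_p$; this $f$ is essentially the conductor exponent. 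We also use $\alpha$ as above. Containing $R_p$ forces $f\le 2d$ and $f\le d$ in the appropriate regime, which produces the bounds $\min(d,f)$.

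Next we split into two families. The first consists of the orders $\mathbb{Z}_p+p^{f/2}\mathbb{Z}_p[\dots]$ with $\alpha=\tfrac32 f$, $f$ even; these are the Gorenstein (simple-extension type) orders. There are $p^{\min(d,f)-f/2}$ of them in one branch and $p^{f/2-1}$ in the other. The second consists of orders with $\tfrac32 f<\alpha$, which are non-Gorenstein by the Gorenstein criterion of \cite[Proposition 3.1]{CHL2}. In that family the number of choices of the translation parameters $u,v,w$ modulo the appropriate powers of $p$ gives the counts $p^{\min(d,f)-(\alpha-f)}$ and $p^{2f-\alpha-1}$. The two branches in each family come from whether the parameter $u$ is divisible by $p$, equivalently whether the order contains $\mathbb{Z}_p[p^{f}x']$ for a translate $x'$ of $x$ with $f\le d$. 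This is what produces the separate sums ranging over $f\le 2d$ and $f\le d$. Assembling these counts with the weights $h\cdot p^{(3d-\alpha)(1-2s)}$ yields exactly the four sums in $P^{3d}_{(3)}(s)$.

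The main obstacle is the exact count of overorders containing $R_p$ with given $(f,\alpha)$, together with the determination of which of them are Gorenstein. Both depend on correctly translating the condition $p^dx\in\mathcal{O}_p$ into congruence conditions on $u,v,w$. I would take these counts from the explicit description of the ideal class monoid and overorders of $\mathbb{Z}_p[p^dx]$ in \cite{CHL2}, reducing the statement to bookkeeping. As a check, I would verify the small cases $d=1$ (and $d=2$) by hand and compare the total count with the known number of overorders of $R_p$.
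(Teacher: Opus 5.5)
Your proposal follows the paper's proof. You isolate the term $\mathcal{O}_p=\mathcal{O}_{E_p}$, use that every proper overorder has residue field $\mathbb{F}_p$ so that $\zeta_{\mathcal{O}_p}(s)/\zeta_{E_p}(s)=1+p^{-s}+p^{-2s}$, and take the list of overorders, their indices $p^{3d-\alpha}$ and the Gorenstein criterion $\alpha=\tfrac32 f$ from \cite[Proposition 4.2 and Corollary 4.3]{CHL2}. Your remark that a proper overorder lies in $\mathbb{Z}_p+p\mathcal{O}_{E_p}$ is stated without justification, but it holds: $\mathbb{F}_{p^3}$ has no intermediate subfields, and Nakayama does the rest. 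This usefully backs up a step the paper only asserts.

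One correction to your heuristic. The two branches are exactly CHL2's two normal forms $\mathbb{Z}_p\langle 1,p^{\alpha-f}(x+cp^{f-\min(d,f)}x^2),p^fx^2\rangle$ and $\mathbb{Z}_p\langle 1,p^fx,p^{\alpha-f}(cpx+x^2)\rangle$. They do not come from a divisibility condition on a constant $u$, which can always be absorbed into $1$. Since you take the counts from \cite{CHL2}, this does not affect the argument.
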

\begin{proof}
By the above argument, $R_p$ takes the form $\mathbb{Z}_p[p^d x]$ for some $x\in E_p$ such that $\mathcal{O}_{E_p}=\mathbb{Z}_p[x]$.
Then, \cite[Proposition 4.2 and Corollary 4.3]{CHL2} show that the set of overorders of $R_p=\mathbb{Z}_p[p^d x]$ is given by
\begin{equation}\label{eq:overorders of unram}
\{\Mfo_{\alpha,f,c}^1\}_{\substack{0\leq f \leq 2d,\\ \frac{3}{2}f \leq \alpha \leq f+\min(d,f),\\ c\in \mathbb{Z}_p/(p^{\min(d,f)-(\alpha-f)})}} \sqcup \{\Mfo_{\alpha,f,c}^2\}_{\substack{1\leq f\leq d,\\ \frac{3}{2}f\leq \alpha < 2f,\\ c\in \mathbb{Z}_p/(p^{2f-\alpha-1})}}
\end{equation}
where 
$\Mfo_{\alpha,f,c}^1=\mathbb{Z}_p\langle 1, p^{\alpha-f}(x+c p^{f-\min(d,f)}x^2),p^fx^2 \rangle$ 
and $\Mfo_{\alpha,f,c}^2=\mathbb{Z}_p\langle 1,p^fx,p^{\alpha-f}(cpx+x^2)\rangle$.
Moreover, by \cite[Corollary 4.3.(2) and (4)]{CHL2}, we have
 $S(R_p)=3d$, $[\mathcal{O}_{\alpha,f,c}^i:R_p]=p^{3d-\alpha}$, and $\mathcal{O}_{\alpha,f,c}^i$ is Gorenstein if and only if $\alpha=\frac{3}{2}f$ for $i=1,2$.
Recall the definition of $L_{\mathcal{O}_{\alpha,f,c}^i}(s)$ in Definition \ref{def:term L}.
Since the residue field of $\mathcal{O}_{\alpha,f,c}^i$ is $\mathbb{F}_p$, except the case where $i=1$ and $f=\alpha=0$ (so that $\mathcal{O}_{\alpha,f,c}^i=\mathcal{O}_{E_p}$), we have 
$\frac{\zeta_{\mathcal{O}_{\alpha,f,c}^i}(s)}{\zeta_{E_p}(s)}=\begin{cases}
    1 &\textit{if $i=1, f=\alpha=0$};\\
    1+p^{-s}+p^{-2s} &\textit{otherwise}.
\end{cases}$ 
Substituting these values into the definition of $\tilde{L}_p(s,R)$ (Definition \ref{def:local factor of L function}) and summing over the listed overorders immediately yields the desired formula.
\end{proof}

\begin{theorem}\label{thm:reformulation_for_unram}
Suppose that $\sigma_p(E)=(3)$ and let $d=S(R_p)/3$.
Then we have
\begin{align*}
        &\tilde{L}_p(s,R)\\&=
        p^{3ds}\Bigg(
        p^{3d(1-2s)}+(1+p^{-s}+p^{-2s})\sum_{d'=1}^{d}p^{3(d-d')(1-2s)}\Big(\sum_{i=0}^{d'}p^{(2-3s)i}+p^{1-2s}\sum_{i=0}^{d'-1}p^{(2-3s)i}+p^{2-4s}\sum_{i=0}^{d'-2}p^{(2-3s)i}\Big)\Bigg).
        \end{align*}
\end{theorem}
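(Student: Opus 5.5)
Starting from Proposition \ref{prop:first formula for unram}, the leading term $p^{3d(1-2s)}$ and the common factor $(1+p^{-s}+p^{-2s})$ already agree on both sides. It therefore suffices to prove the polynomial identity
\[
P_{(3)}^{3d}(s)=\sum_{d'=1}^{d}p^{3(d-d')(1-2s)}\Big(\sum_{i=0}^{d'}p^{(2-3s)i}+p^{1-2s}\sum_{i=0}^{d'-1}p^{(2-3s)i}+p^{2-4s}\sum_{i=0}^{d'-2}p^{(2-3s)i}\Big).
\]
Following the strategy announced in the introduction, I will write every monomial on both sides in the form $p^{A(1-2s)+B(2-3s)}$ and compare the resulting multisets of exponent pairs $(A,B)$. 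The map $(A,B)\mapsto$ (coefficient of $s$, constant) sends $(A,B)$ to $(-2A-3B,\,A+2B)$. This map is invertible over $\mathbb{Z}$ (its determinant is $-1$), so monomials $p^{u-vs}$ correspond bijectively to pairs $(A,B)$, and the comparison is legitimate.

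On the right-hand side, the three inner sums give the pairs $(3(d-d'),i)$ for $0\le i\le d'$, then $(3(d-d')+1,i)$ for $0\le i\le d'-1$, then $(3(d-d')+2,i)$ for $0\le i\le d'-2$. Writing $A=3(d-d')+r$ with $r\in\{0,1,2\}$, the right-hand side is exactly the multiset $\{(A,B): 0\le A\le 3d-3,\ 0\le B\le d-\lceil A/3\rceil -[3\mid A]\cdot 0\}$. More carefully, for fixed $A$ the admissible range is $0\le B\le d-d'-r+\cdot$. I will tabulate this cleanly as: for each $A$ with $0\le A<3d$, $B$ runs over $0\le B\le \lfloor (3d-A)/3\rfloor$ when $3\mid A$, and over $0\le B\le \lfloor(3d-A)/3\rfloor-1+\ldots$ otherwise. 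This is a triangle-like region which I will describe precisely.

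On the left-hand side, I first expand $(1+p^{1-s})$, using $p^{1-s}=p^{-(1-2s)+(2-3s)}\cdot p^{0}$, i.e.\ $p^{1-s}$ corresponds to the shift $(A,B)\mapsto(A-1,B+1)$. Likewise a constant factor $p^{m}$ corresponds to $(A,B)\mapsto(A+2m,B-m)\cdot$, checked against $(-2A-3B,A+2B)$: $p^1$ is $(2,-1)$... more precisely $p=p^{-3(1-2s)+2(2-3s)}$, which is $(-3,2)$. So each term of $P_{(3)}^{3d}$ becomes an explicit pair depending on $(f,\alpha)$ and on $\min(d,f)$.

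I will then split into the ranges $f\le d$ and $d<f\le 2d$, so that $\min(d,f)$ is linear, and reparametrize by $\beta=\alpha-\tfrac32 f$ (or $2f-\alpha$ for the second family). This turns each of the four sums into a lattice-point region in the $(A,B)$-plane. The main obstacle is the bookkeeping: showing that these regions, together with the shifted copies coming from $p^{1-s}$, tile the right-hand region exactly once. The regions come from the Gorenstein terms ($\alpha=\tfrac32 f$), from the non-Gorenstein $\mathcal{O}^1$ terms, and from the $\mathcal{O}^2$ terms. My first attempt will be an induction on $d$: compare $P^{3(d+1)}_{(3)}$ with $p^{3(1-2s)}P^{3d}_{(3)}$. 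On the right-hand side their difference is just the $d'=d+1$ block $p^{0}(\cdots)$. On the left-hand side, the new terms should be those with $f$ near its maximum, or with $\min(d,f)=d$, and I will verify that these contribute exactly the new block. If the induction does not close cleanly, I will instead fall back to directly verifying the multiset equality by computing, for each fixed $A$, the number of left-hand terms with each value of $B$.
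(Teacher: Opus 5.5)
You have the same framework as the paper, and these parts are correct: the reduction to the identity for $P_{(3)}^{3d}(s)$, the encoding $p^{A(1-2s)+B(2-3s)}$, and the dictionary $p^{1-s}\leftrightarrow(-1,+1)$, $p\leftrightarrow(-3,2)$. But there is a genuine gap: the proposal stops where the proof begins. Neither multiset is actually determined, and the one you attempt is wrong. On the right-hand side the pairs are $A=3(d-d')+r$ with $r\in\{0,1,2\}$, $1\le d'\le d$, and $0\le B\le d'-r$. Your first description has two errors:
\begin{itemize}
\item it omits $A=3d-2$, i.e.\ the pair $(3d-2,0)$ coming from $d'=1$ and $p^{1-2s}$;
\item it overstates the bound on $B$ by one when $A\equiv 2\pmod 3$; for example, $A=2$ allows only $B\le d-2$.
\end{itemize}
Your second description is left unfinished. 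In the paper's coordinates $x=3d-A$, row $B=i$ is $\{3i\}\cup\{3i+2,\dots,3d\}$ for $1\le i\le d$, and $\{2,\dots,3d\}$ for $i=0$.

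The missing step is the row-by-row description of $P_{(3)}^{3d}(s)$. Write a term $p^{X+Y(1-2s)}$ as $(Y-3X,2X)$ and its $p^{1-s}$-multiple as $(Y-3X-1,2X+1)$. In the coordinates $(x,B)=(3d-A,B)$ one finds, for suitable ranges of the indices:
\begin{itemize}
\item the Gorenstein terms give $(3i,i)$, $(3i+3,i)$ and $(3d,i)$, on even rows only;
\item on row $i$, the non-Gorenstein $\mathcal{O}^1$-terms with $f\le d$ give the step-two progression $x=2f+i$;
\item the non-Gorenstein $\mathcal{O}^2$-terms give the complementary progression $x=2f+i-1$;
\item the non-Gorenstein $\mathcal{O}^1$-terms with $d<f\le 2d$ give $[2d+i+1,3d-1]$ for $i$ even, or $[2d+i+1,3d]$ for $i$ odd.
\end{itemize}
The two progressions interleave to $[3i+2,3d-1]\setminus\{3i+3\}$ on even rows and to $[3i,3d]\setminus\{3i+1\}$ on odd rows. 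The Gorenstein terms then supply exactly the missing $3i$, $3i+3$ and $3d$ on even rows. This interleaving is the substance of the theorem, and nothing in your proposal establishes it.

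Your induction on $d$ does not get around this. The overorder families depend on $d$ through $\min(d,f)$ and the ranges of $c$, so level-$d$ terms do not correspond to level-$(d+1)$ terms of the same type. For instance, when $f\ge d+2$, the Gorenstein monomials $(A,B)=(0,2d-f)$ of level $d$ reappear at level $d+1$ only as unshifted parts of non-Gorenstein terms. So isolating the ``new'' block already requires the full description above.
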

\begin{proof}
By Proposition \ref{prop:first formula for unram}, it suffices to show that
\begin{equation}\label{eq:comparision_unram}
P_{(3)}^{3d}(s)=\sum_{d'=1}^{d}p^{3(d-d')(1-2s)}\Big(\sum_{i=0}^{d'}p^{(2-3s)i}+p^{1-2s}\sum_{i=0}^{d'-1}p^{(2-3s)i}+p^{2-4s}\sum_{i=0}^{d'-2}p^{(2-3s)i}\Big).
\end{equation}
We first rewrite each term of $P_{(3)}^{3d}(s)$ in the form of $p^{A(1-2s)+B(2-3s)}$.
Then, by reorganizing the indices of summations, we have
\begin{equation}\label{unram form of AB}
    \begin{aligned}
    P_{(3)}^{3d}(s)=&\sum_{1\leq f\leq d,\ 2|f}p^{
        (3d-3f)(1-2s)+f(2-3s)}+\sum_{0\leq f< d,\ 2|f}p^{f(2-3s)}
        +\sum_{1\leq f\leq d,\ 2|f}p^{(3d-3f+3)(1-2s)+(f-2)(2-3s)}\\
    &+\sum_{0\leq  \alpha< \frac{d}{2}}\sum_{2\alpha< f \leq d}\Big(p^{
    (3d-2f-2\alpha)(1-2s)+2\alpha(2-3s)}+p^{(3d-2f-2\alpha-1)(1-2s)+(2\alpha+1)(2-3s)}\Big)\\
     &+\sum_{0\leq \alpha <\frac{d}{2}}\sum_{2d<f <3d-2\alpha}\Big(p^{(3d-f-2\alpha)(1-2s)+2\alpha(2-3s)}+p^{(3d-f-2\alpha-1)(1-2s)+(2\alpha+1)(2-3s)}\Big)\\
    &+\sum_{0< \alpha <\frac{d}{2}}\sum_{2\alpha<f\leq d}\Big(p^{
    (3d-2f-2\alpha+3)(1-2s)+(2\alpha-2)(2-3s)}
    +p^{(3d-2f-2\alpha+2)(1-2s)+(2\alpha-1)(2-3s)}\Big).
    \end{aligned}
    \end{equation}
    We represent the term $p^{A(1-2s)+B(2-3s)}$ by the pair of numbers $(3d-A,B)$.
    Since the representation of a monomial $p^{X+Ys}$ in the form $p^{A(1-2s)+B(2-3s)}$ is unique, it suffices to show that the corresponding multisets of pairs coincide.
    
    The three summations in the first line of \eqref{unram form of AB} are represented, for fixed $0\leq i \leq d$ such that $2|i$, by 
    \[
    \left\{
    \begin{array}{l l}
       \{(3,0), (3d,0)\} &\text{if $i=0$;} \\
       \{(3i,i), (3i+3,i), (3d,i)\} &\text{if $0< i<d-1$;}\\
       \{(3d-3,d-1), (3d,d-1)\} &\text{if $i=d-1$;}\\
       \{(3d,d)\} &\text{if $i=d$}.
    \end{array}
    \right.
    \]
    The remaining three double summations in \eqref{unram form of AB} are represented, 
    for fixed $i$ such that $0\leq i\leq  d$, by
    \[
    \left\{
    \begin{array}{l l}
    \{(3i+2,i),(3i+3,i),\cdots, (3d-1,i)\}\setminus \{(3i+3,i)\} &\text{if $i$ is even};\\
    \{(3i,i),(3i+1,i),\cdots,(3d,i)\}\setminus \{(3i+1,i)\}                             &\text{if $i$ is odd}.
    \end{array}\right.\]
    Taking the union of these contributions (for even $i$, the boundary terms supply $(3i,i)$, $(3i+3,i)$, and $(3d,i)$, which fill in the gaps of the double-summation contribution), $P_{(3)}(s)$ is the summation whose terms correspond to the set of pairs
    \[\{(x,0):2\leq x \leq 3d\}\cup \bigcup_{1\leq i\leq d}\Big(\{(3i,i)\}\cup \{(x,i):  3i+2 \leq x \leq 3d\}\Big).\]
    On the other hand, the right-hand side of (\ref{eq:comparision_unram}) corresponds to the set of pairs
    \begin{align*}
    &\bigcup_{d'=1}^d\Big( \{(3d',i)\}_{0\leq i\leq d'}\cup \{(3d'-1,i)\}_{0\leq i\leq d'-1} \cup \{(3d'-2,i)\}_{0\leq i\leq d'-2 } \Big) \\
        &=\bigcup_{i=0}^d \Big(\{(3d',i)\}_{\max(1,i)\leq d'\leq d}\cup \{(3d'-1,i)\}_{i+1\leq d'\leq d} \cup\{(3d'-2,i)\}_{i+2\leq d'\leq d}\Big)\\
    &=\{(x,0):2\leq x \leq 3d\}\cup \bigcup_{1\leq i\leq d}\Big(\{(3i,i)\}\cup \{(x,i):  3i+2 \leq x \leq 3d\}\Big).
    \end{align*}
   Hence the two multisets coincide, proving \eqref{eq:comparision_unram}.
\end{proof}
\subsubsection{The case where $\sigma_p(E)=(1^3)$}\label{sec:reformulation for ram}
Since $R_p$ is a simple extension over $\mathbb{Z}_p$ and $E_p$ is a ramified field extension, $R_p$ takes the form $\mathbb{Z}_p[p^d x]$ or $\mathbb{Z}_p[p^d x^2]$ for some integer $d\geq 0$ and a suitably chosen uniformizer $x$ of $E_p$.
If $R_p=\mathbb{Z}_p[p^d x]$, then we have $S(R_p)=3d$ by the same computation in the case $\sigma_p(E)=(3)$.
On the other hand, if $R_p=\mathbb{Z}_p[p^dx^2]$, then its basis as a $\mathbb{Z}_p$-module is given by $\{1,p^d x^2,p^{2d+1}x\}$ yielding $S(R_p)=3d+1$.
Consequently, we treat these two cases separately depending on whether $3|S(R_p)$ or $3|(S(R_p)-1)$.

\begin{proposition}\label{prop:first formula for ram}
Suppose that $\sigma_p(E)=(1^3)$.
\begin{itemize}
    \item Assume that $3|S(R_p)$ and let $d=S(R_p)/3$. Then we have
\[
  \tilde{L}_p(s,R)=p^{3ds}\Big(p^{3d(1-2s)}+P^{3d}_{(1^3)}(s)\Big).
  \]
    where
    \begin{align*}
    P^{3d}_{(1^3)}(s)=&\sum_{0<f\leq 2d,\ 2|f}p^{\min(d,f)-\frac{f}{2}+(3d-\frac{3}{2}f)(1-2s)}+\sum_{0< f\leq d,\ 2\nmid f}p^{\frac{f-1}{2}+(3d-\frac{3f-1}{2})(1-2s)}\\
    &+\sum_{0<f\leq 2d}\sum_{\frac{3}{2}f<\alpha \leq f+\min(d,f)}(1+p^{1-s})p^{\min(d,f)-(\alpha-f)+(3d-\alpha)(1-2s)}\\
    &+\sum_{0< f\leq d}\sum_{\frac{3f-1}{2}<\alpha\leq 2f-1}(1+p^{1-s})p^{2f-\alpha-1+(3d-\alpha)(1-2s)}.
    \end{align*}
     \item   Assume that $3|(S(R_p)-1)$ and let $d=(S(R_p)-1)/3$. Then we have
    \begin{align*}
    \tilde{L}_p(s,R)=p^{(3d+1)s}\Big(p^{(3d+1)(1-2s)}+P^{3d+1}_{(1^3)}(s)\Big),
    \end{align*}
    where
    \begin{align*}
    P^{3d+1}_{(1^3)}(s)=&\sum_{0<f\leq 2d+1,\ 2\nmid f}p^{\min(d,f-1)-\frac{f-1}{2}+(3d+1-\frac{3f-1}{2})(1-2s)}
    +\sum_{0< f\leq d,\ 2\mid f}p^{\frac{f}{2}+(3d+1-\frac{3}{2}f)(1-2s)}\\
    &+\sum_{0<f\leq 2d+1}\sum_{\frac{3f-1}{2}<\alpha \leq f+\min(d,f-1)}(1+p^{1-s})p^{\min(d,f-1)-(\alpha-f)+(3d+1-\alpha)(1-2s)}\\
    &+\sum_{0< f\leq d}\sum_{\frac{3}{2}f<\alpha\leq 2f}(1+p^{1-s})p^{2f-\alpha+(3d+1-\alpha)(1-2s)}.
    \end{align*}
\end{itemize}
\end{proposition}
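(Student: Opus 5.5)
The plan follows the proof of Proposition \ref{prop:first formula for unram} essentially verbatim, with the list of overorders replaced by the ramified one from \cite{CHL2}. First, I would fix a uniformizer $x$ of $E_p$ with $\mathcal{O}_{E_p}=\mathbb{Z}_p[x]$ and treat the two shapes $R_p=\mathbb{Z}_p[p^dx]$ (so $S(R_p)=3d$) and $R_p=\mathbb{Z}_p[p^dx^2]$ (so $S(R_p)=3d+1$) separately. For each shape I would invoke the classification of overorders of a Gorenstein order in a totally ramified cubic extension from \cite{CHL2}, namely the analogue for $\sigma_p(E)=(1^3)$ of \cite[Proposition 4.2 and Corollary 4.3]{CHL2}. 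It should present the overorders as two disjoint families $\mathcal{O}^1_{\alpha,f,c}$ and $\mathcal{O}^2_{\alpha,f,c}$, indexed by a level $f$, an index exponent $\alpha$ and a parameter $c$ in a finite quotient of $\mathbb{Z}_p$.

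For $3\mid S(R_p)$, I expect the following ranges:
\begin{itemize}
\item the first family has $0\le f\le 2d$, $\lceil 3f/2\rceil\le\alpha\le f+\min(d,f)$, and $c$ modulo $p^{\min(d,f)-(\alpha-f)}$;
\item the second family has $1\le f\le d$, $\lceil (3f-1)/2\rceil\le \alpha\le 2f-1$, and $c$ modulo $p^{2f-\alpha-1}$.
\end{itemize}
The lower bound on $\alpha$ now depends on the parity of $f$, because the valuation on $E_p$ is in thirds. For $3\mid S(R_p)-1$ the ranges are the same with $f$ shifted by one, which replaces $\min(d,f)$ by $\min(d,f-1)$ and reverses the roles of the parities. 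From the same source I would read off three facts:
\begin{itemize}
\item $[\mathcal{O}^i_{\alpha,f,c}:R_p]=p^{S(R_p)-\alpha}$;
\item $\mathcal{O}^i_{\alpha,f,c}$ is Gorenstein exactly when $\alpha$ attains its minimal value $\lceil 3f/2\rceil$ (respectively $\lceil (3f-1)/2\rceil$);
\item the maximal order corresponds to $f=\alpha=0$ in the first case, and to the extreme value $\alpha=S(R_p)$ in the second case.
\end{itemize}

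The zeta ratio is simpler here than in the unramified case. Every order between $R_p$ and $\mathcal{O}_{E_p}$ is local with residue field $\mathbb{F}_p$, because $E_p$ is a totally ramified field. Since $\zeta_{E_p}(s)=(1-p^{-s})^{-1}$ also has residue field $\mathbb{F}_p$, the ratio $\zeta_{\mathcal{O}_p}(s)/\zeta_{E_p}(s)$ equals $1$ for every overorder. This explains why no factor $(1+p^{-s}+p^{-2s})$ appears in $P^{3d}_{(1^3)}$ or $P^{3d+1}_{(1^3)}$.

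Substituting into Definition \ref{def:local factor of L function} then gives the result. Each non-Gorenstein overorder contributes $(1+p^{1-s})p^{(S(R_p)-\alpha)(1-2s)}$. Each Gorenstein one contributes $p^{(S(R_p)-\alpha)(1-2s)}$. Counting the choices of $c$ produces the multiplicities $p^{\min(d,f)-(\alpha-f)}$ and $p^{2f-\alpha-1}$ (or their shifted versions). The maximal order gives the isolated term $p^{S(R_p)(1-2s)}$. I would split the Gorenstein terms according to the parity of $f$: at $\alpha=3f/2$ or $\alpha=(3f-1)/2$, the first family yields the first single sum only for the parity where that minimum is attainable, and the second family yields the second single sum. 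The strict inequalities on $\alpha$ in the double sums then encode exactly the non-Gorenstein overorders.

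The main obstacle is bookkeeping rather than ideas. One must extract the exact index ranges, parity conditions and Gorenstein criterion from \cite{CHL2} for the ramified case, and check the boundary cases. These include:
\begin{itemize}
\item $f$ odd in the first family, where $3f/2$ is not an integer, so that only non-Gorenstein terms survive;
\item whether $f=0$, or $f=1$ in the shifted case, gives only the maximal order;
\item that the second family is empty for $d=0$.
\end{itemize}
I would verify these against small cases such as $d=0$ and $d=1$ by direct enumeration of overorders.
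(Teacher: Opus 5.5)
Your plan for $3\mid S(R_p)$ is the paper's proof: take the overorder list of $\mathbb{Z}_p[p^dx]$ from \cite[Proposition 4.10 and Corollary 4.11]{CHL2}, note that every overorder is local with residue field $\mathbb{F}_p$ so the zeta ratio is $1$, and substitute.

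\textbf{The gap is the case $S(R_p)=3d+1$, $R_p=\mathbb{Z}_p[p^dx^2]$.} You assume \cite{CHL2} hands you the overorders of this shape, and you guess them to be ``the same ranges with $f$ shifted by one''. That list has to be derived, and your description of it is not accurate. The missing idea is that $R_p=\mathbb{Z}_p\langle 1,p^{2d+1}x,p^dx^2\rangle$ contains $\mathbb{Z}_p[p^{2d+1}x]$. Hence the overorders of $R_p$ are exactly the overorders $\mathcal{O}^i_{\alpha,f,c}$ of $\mathbb{Z}_p[p^{2d+1}x]$ (the known list with $d$ replaced by $2d+1$) that contain $p^dx^2$ and $p^{2d+1}x$. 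The two identities that settle membership are $p^{2d+1}x=p^{2d+1-(\alpha-f)}\cdot p^{\alpha-f}(x+cx^2)-cp^{2d+1}x^2$ and $p^dx^2=p^{d-(\alpha-f)}\cdot p^{\alpha-f}(cpx+x^2)-cp^{d+1}x$. From them one finds:
\begin{itemize}
\item $\mathcal{O}^1_{\alpha,f,c}$ survives iff $f\le d$. Then $\frac{3}{2}f\le\alpha\le 2f$ and $c$ runs mod $p^{2f-\alpha}$.
\item $\mathcal{O}^2_{\alpha,f,c}$ survives iff $f\le 2d+1$, $\alpha-f\le d$ and $p^{d+1}c\in(p^f)$. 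Then $c$ runs over $p^{f-1-\min(d,f-1)}\mathbb{Z}_p/(p^{2f-\alpha-1})$, a set of size $p^{\min(d,f-1)-(\alpha-f)}$.
\end{itemize}
This is where $\min(d,f-1)$ comes from, and it shows the two families swap roles. Now $\mathcal{O}^2$ runs over $1\le f\le 2d+1$ and produces the first single and first double sums of $P^{3d+1}_{(1^3)}$. The truncated $\mathcal{O}^1$ produces the remaining two sums, with multiplicity $p^{2f-\alpha}$ rather than $p^{2f-\alpha-1}$ and upper bound $\alpha\le 2f$ rather than $2f-1$. The index follows by dividing indices over $\mathbb{Z}_p[p^{2d+1}x]$: $[\mathcal{O}:R_p]=p^{(6d+3-\alpha)-(3d+2)}=p^{3d+1-\alpha}$. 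The Gorenstein criterion carries over because being Gorenstein is intrinsic to $\mathcal{O}$. Without this containment computation the second bullet is not proved.

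Two smaller slips also need fixing.
\begin{itemize}
\item The Gorenstein criterion is $\alpha=\frac{3}{2}f$ for $\mathcal{O}^1$ and $\alpha=\frac{3f-1}{2}$ for $\mathcal{O}^2$, not ``$\alpha$ equals its minimal value''. For example, $f=1$, $\alpha=2$ in the first family gives $\mathbb{Z}_p+p\mathcal{O}_{E_p}$, which is not Gorenstein. Your stated criterion would therefore drop the factor $1+p^{1-s}$ from terms that carry it in the formula. Your later remark about odd $f$ tacitly uses the correct criterion, so the proposal contradicts itself here.
\item Since $[\mathcal{O}:R_p]=p^{S(R_p)-\alpha}$, the maximal order is the $f=\alpha=0$ member of the first family in both cases. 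The value $\alpha=S(R_p)$ gives $R_p$ itself, not the maximal order.
\end{itemize}
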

\begin{proof}
    \textbf{The case that $\mathbf{3|S(R_p)}$.} By the above argument, $R_p$ takes the form $\mathbb{Z}_p[p^dx]$.
    In this case, by \cite[Proposition 4.10 and Corollary 4.11]{CHL2}, the set of overorders of $R_p$ is given by
    \[
    \{\Mfo_{\alpha,f,c}^1\}_{\substack{
    0\leq f\leq 2d,
    \\ \frac{3}{2}f\leq \alpha \leq f+\min(d,f),\\
    c\in  \mathbb{Z}_p/(p^{\min(d,f)-(\alpha-f)})}}\sqcup  \{\Mfo_{\alpha,f,c}^2\}_{\substack{
    1\leq f \leq d,\\
    \frac{3f-1}{2}\leq \alpha\leq 2f-1,\\
    c\in \mathbb{Z}_p/(p^{2f-\alpha-1})}}  
    \]where
    $
\Mfo_{\alpha,f,c}^1=\mathbb{Z}_p\langle 1,p^{\alpha-f}(x+cp^{f-\min(d,f)}x^2),p^f x^2\rangle$ and $
\Mfo_{\alpha,f,c}^2=\mathbb{Z}_p\langle1, p^f x,p^{\alpha-f}(cp x+x^2)\rangle$.
Moreover, by \cite[Corollary 4.11.(1) and (4)]{CHL2}, we have $S(R_p)=3d$, $[\mathcal{O}_{\alpha,f,c}^i:R_p]=p^{3d-\alpha}$, and  
$\Mfo_{\alpha,f,c}^i$ is Gorenstein if and only if $\alpha=\frac{3}{2}f$ for $i=1$ and $\alpha=\frac{3f-1}{2}$ for $i=2$.
Since the residue field of $\mathcal{O}_{a,b,c}^i$ is $\mathbb{F}_p$, we have $\frac{\zeta_{\mathcal{O}_{\alpha,f,c}^i}(s)}{\zeta_{E_p}(s)}=1$.
Substituting these values into the definition of $\tilde{L}_p(s,R)$ and summing over the listed overorders immediately yields the desired formula.

\vspace{0.5em}\noindent
\textbf{The case that $\mathbf{3|(S(R_p)-1)}$.}  $R_p$ takes the form $\mathbb{Z}_p[p^dx^2]$.
Using the fact that $\mathbb{Z}_p[p^{2d+1}x]\subset \mathbb{Z}_p[p^dx^2]$, we repeat the previous arguments for $\mathbb{Z}_p[p^{2d+1}x]$, but this time we select the overorders of $\mathbb{Z}_p[p^{2d+1}x]$ which contain $R_p$.
The set of overorders of $\mathbb{Z}_p[p^{2d+1}x]$ is given by
    \[
    \{\Mfo_{\alpha,f,c}^1\}_{\substack{
    0\leq f\leq 4d+2,
    \\ \frac{3}{2}f\leq \alpha \leq f+\min(d,f),\\
    c\in  \mathbb{Z}_p/(p^{\min(2d+1,f)-(\alpha-f)})}}\sqcup  \{\Mfo_{\alpha,f,c}^2\}_{\substack{
    1\leq f \leq 2d+1,\\
    \frac{3f-1}{2}\leq \alpha\leq 2f-1,\\
    c\in \mathbb{Z}_p/(p^{2f-\alpha-1})}},  
    \]
where     $
\Mfo_{\alpha,f,c}^1=\mathbb{Z}_p\langle 1,p^{\alpha-f}(x+cp^{f-\min(2d+1,f)}x^2),p^f x^2\rangle$ and $
\Mfo_{\alpha,f,c}^2=\mathbb{Z}_p\langle1, p^f x,p^{\alpha-f}(cp x+x^2)\rangle$.
Since the order is given by $R_p = \mathbb{Z}_p \langle 1, p^{2d+1} x, p^d x^2 \rangle$, $R_p \subset \mathcal{O}_{\alpha,f,c}^i$ if and only if $p^d x^2 \in \mathcal{O}_{\alpha,f,c}^i$ and $p^{2d+1} x \in \mathcal{O}_{\alpha,f,c}^i$ for $i=1,2$. 
    
Then, $\mathcal{O}_{\alpha,f,c}^1$ contains $R_p$ if and only if $f \le d$ and $\alpha-f \le 2d+1$. 
Indeed, the first condition directly follows from the condition $p^d x^2 \in \mathcal{O}_{\alpha,f,c}^i$.
For the second condition, we express the element $p^{2d+1} x$ as follows:
\[
p^{2d+1} x = p^{2d+1-(\alpha-f)} \cdot p^{\alpha-f}(x+cx^2) - cp^{2d+1}x^2.
\]
Under the assumption $f \le d$, we have $p^d x^2 \in \mathcal{O}_{\alpha,f,c}^i$, which implies that $cp^{2d+1}x^2 \in \mathcal{O}_{\alpha,f,c}^i$. Therefore, $p^{2d+1} x$ belongs to $\mathcal{O}_{\alpha,f,c}^i$ if and only if $2d+1-(\alpha-f) \ge 0$. 

On the other hand, $\mathcal{O}_{\alpha,f,c}^2$ contains $R_p$ if and only if $f\leq 2d+1$, $\alpha-f\leq d$, and $cp^{d+1}\in (p^f)$.
Indeed, the first condition directly follows from the condition $p^{2d+1}x\in \mathcal{O}_{\alpha,f,c}^2$.
For the second condition we express the element $p^{d}x^2$ as follows:
\[
p^d x^2=p^{d-(\alpha-f)}\cdot p^{\alpha-f}(cpx+x^2)-cp^{d+1}x.
\]
Therefore, $p^{d}x^2$ belongs to $\mathcal{O}_{\alpha,f,c}^2$ if and only if $d-(\alpha-f)\geq 0$ and $cp^{d+1}\in (p^f)$.

In conclusion, the set of overorders of $R_p$ is given by
 \[
    \{\Mfo_{\alpha,f,c}^1\}_{\substack{
    0\leq f\leq d,
    \\ \frac{3}{2}f\leq \alpha \leq 2f,\\
    c\in  \mathbb{Z}_p/(p^{2f-\alpha})}}\sqcup  \{\Mfo_{\alpha,f,c}^2\}_{\substack{
    1\leq f \leq 2d+1,\\
    \frac{3f-1}{2}\leq \alpha\leq f+\min(d,f-1),\\
    c\in p^{f-1-\min(d,f-1)}\mathbb{Z}_p/(p^{2f-\alpha-1})}}.  
    \]
    Here, $\Mfo_{\alpha,f,c}^i$ is Gorenstein if and only if $\alpha=\frac{3}{2}f$ for $i=1$ and $\alpha=\frac{3f-1}{2}$ for $i=2$.
    By substituting the values $S(R_p)=3d+1$, $[\mathcal{O}_{\alpha,f,c}^i:R_p]=p^{3d+1-\alpha}$, and $\frac{\zeta_{\mathcal{O}_{\alpha,f,c}^i}(s)}{\zeta_{E_p}(s)}=1$ in the definition of $\tilde{L}_p(s,R)$, we have the desired formula.
\end{proof}

\begin{theorem}\label{thm:reformulation_for_ram}
Suppose that $\sigma_p(E)=(1^3)$. 
\begin{itemize}
    \item 
        Assume that $3|S(R_p)$ and let $d=S(R_p)/3$. Then we have
        \[
        \tilde{L}_p(s,R)=
        p^{3ds}\Bigg(p^{3d(1-2s)}+\sum_{d'=1}^d p^{3(d-d')(1-2s)}\Big(
        \sum_{i=0}^{d'}p^{(2-3s)i} + p^{1-2s}\sum_{i=0}^{d'-1}p^{(2-3s)i}+ p^{2-4s}\sum_{i=0}^{d'-1}p^{(2-3s)i}\Big)
        \Bigg).
        \]
    \item
        Assume that $3|(S(R_p)-1)$ and let $d=(S(R_p)-1)/3$. Then we have
        \begin{align*}
            &\tilde{L}_p(s,R)\\&=
        p^{(3d+1)s}\Bigg(p^{(3d+1)(1-2s)}+p^{3d(1-2s)}+\sum_{d'=1}^d p^{3(d-d')(1-2s)}\Big(\sum_{i=0}^{d'}p^{(2-3s)i}+p^{1-2s}\sum_{i=0}^{d'}p^{(2-3s)i}+p^{2-4s}\sum_{i=0}^{d'-1}p^{(2-3s)i}\Big)\Bigg).
        \end{align*}
\end{itemize}
\end{theorem}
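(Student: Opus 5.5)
By Proposition \ref{prop:first formula for ram}, in both cases it suffices to prove a polynomial identity. In the first case we need $P^{3d}_{(1^3)}(s)$ to equal the bracketed double sum over $d'$. In the second case we need $P^{3d+1}_{(1^3)}(s)$ to equal $p^{3d(1-2s)}$ plus the corresponding double sum. I will follow the proof of Theorem \ref{thm:reformulation_for_unram} verbatim in structure.

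\textbf{Step 1: rewrite the explicit formula.} Each monomial in $P^{3d}_{(1^3)}(s)$, respectively $P^{3d+1}_{(1^3)}(s)$, is rewritten in the form $p^{A(1-2s)+B(2-3s)}$. This is possible and unique because $p^{X+Ys}=p^{A(1-2s)+B(2-3s)}$ forces $A=-(3X+2Y)$, i.e.\ $A$ and $B$ are determined by $X$ and $Y$.

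\begin{itemize}
\item For the single sums with $2\mid f$ and $f\le d$, the exponent $\tfrac{f}{2}+(3d-\tfrac32 f)(1-2s)$ becomes $(3d-3f)(1-2s)+\tfrac f2(2-3s)$. For $d<f\le 2d$ one gets $(3d-\tfrac32 f-\ldots)$ similarly.
\item For the odd-$f$ terms, $\tfrac{f-1}{2}+(3d-\tfrac{3f-1}{2})(1-2s)$ becomes a pair with $B=\tfrac{f-1}{2}$.
\item In the double sums, I split $(1+p^{1-s})=1+p^{1-s}$. A factor $p^{m}\cdot p^{1-s}$ combined with the existing exponent shifts the pair as $(A,B)\mapsto(A-1,B+1)$, since $p^{1-s}\cdot p^{(1-2s)}\cdot p^{-(2-3s)}=p^{0}$. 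Thus $p^{m+(N)(1-2s)}=p^{(N-2m)(1-2s)+m(2-3s)}$ and $p^{1-s}\,p^{m+N(1-2s)}=p^{(N-2m-1)(1-2s)+(m+1)(2-3s)}$.
\end{itemize}

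After reindexing by $\alpha'=\alpha-f$ (or $2f-\alpha$), each double sum becomes a sum over explicit ranges. I then record each term as a pair $(3d-A,B)$, or $(3d+1-A,B)$ in the second case, exactly as in \eqref{unram form of AB}.

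\textbf{Step 2: compute the target multiset.} The right-hand side of the first case corresponds to
\[
\bigcup_{d'=1}^d\Big(\{(3d',i)\}_{0\le i\le d'}\cup\{(3d'-1,i)\}_{0\le i\le d'-1}\cup\{(3d'-2,i)\}_{0\le i\le d'-1}\Big).
\]
Regrouped by $i$, this is
\[
\{(x,0):1\le x\le 3d\}\cup\bigcup_{1\le i\le d}\Big(\{(3i,i)\}\cup\{(x,i):3i+1\le x\le 3d\}\Big).
\]
It differs from the $(3)$ case only in the extra pairs $(3i+1,i)$, reflecting the change $d'-2\to d'-1$.

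In the second case the target is the analogous union with the middle sum running to $d'$, together with the extra pair coming from $p^{3d(1-2s)}$. This regroups as an interval of $x$-values for each fixed $i$, with a single isolated point at $x=3i$ or $3i+1$.

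\textbf{Step 3: match the multisets.} For fixed $B=i$, I compare the pairs produced in Step 1 with the target sets of Step 2.

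\begin{itemize}
\item The double sums contribute, for each fixed $i$ split by parity, a consecutive run of $x$-values with possibly one gap.
\item The Gorenstein (single-sum) terms supply the boundary points that fill that gap.
\end{itemize}

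I expect this to mirror the $(3)$ case, except that the type-$2$ overorders now have range $\tfrac{3f-1}{2}\le\alpha\le 2f-1$. That range adds exactly one extra pair per $i$, which accounts for the $(3i+1,i)$ entries.

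\textbf{Main obstacle.} The main difficulty is the bookkeeping of parity and the $\min(d,f)$, respectively $\min(d,f-1)$, breakpoints. Each double sum must be split at $f=d$ (or $f=d+1$), and odd and even $f$ handled separately, so that each piece yields a clean interval in $x$ for fixed $i$.

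I would first tabulate small cases ($d=1,2$) in both congruence classes to pin down where the gaps and boundary terms sit. Then I would write the general interval description for fixed $i$, treating $i=0$ and $i=d$ separately as in the unramified proof. In the second case, the extra $c\in p^{f-1-\min(d,f-1)}\mathbb{Z}_p$ constraint makes the count $\min(d,f-1)-(\alpha-f)$ rather than $2f-\alpha-1$, and this must be checked carefully against the term $p^{3d(1-2s)}$.
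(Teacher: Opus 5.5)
Your strategy is the paper's: reduce via Proposition~\ref{prop:first formula for ram} to an identity for $P^{3d}_{(1^3)}(s)$ (resp.\ $P^{3d+1}_{(1^3)}(s)$), rewrite every monomial as $p^{A(1-2s)+B(2-3s)}$, and compare multisets of pairs. However, the conversion rule in your Step~1 is wrong, and all of the bookkeeping rests on it. Matching $p^{(A+2B)-(2A+3B)s}$ with $p^{(m+N)-2Ns}$ gives $A+2B=m+N$ and $2A+3B=2N$, hence
\[
p^{m+N(1-2s)}=p^{(N-3m)(1-2s)+2m(2-3s)},\qquad p^{1-s}\,p^{m+N(1-2s)}=p^{(N-3m-1)(1-2s)+(2m+1)(2-3s)}.
\]
This is also what your own formula $A=-(3X+2Y)$, $B=2X+Y$ yields. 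Your version $p^{(N-2m)(1-2s)+m(2-3s)}$ is actually $p^{N(1-2s)+ms}$. Correspondingly, $\frac f2+(3d-\frac32 f)(1-2s)$ becomes $(3d-3f)(1-2s)+f(2-3s)$, and the odd-$f$ Gorenstein term has $B=f-1$, not $\frac{f-1}{2}$.

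This error is not cosmetic: with halved $B$-coordinates the multisets do not match. For $d=2$, the target row $i=2$ is $\{(6,2)\}$, which comes from the term $p^{1+3(1-2s)}$ with $f=2$. Under your rule, no monomial of $P^{6}_{(1^3)}(s)$ has $B=2$. The correct rule also supplies the structure the matching depends on. Unshifted monomials land only in even rows $B=2m$, and the $p^{1-s}$-shifted ones only in odd rows. So the gaps left by the double sums, and the Gorenstein single sums that fill them, occur only for even $i$.

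Beyond that, Step~3 is stated as an expectation rather than carried out, and that count is the entire content of the theorem. In the first case, what must be verified (and what the paper verifies) is as follows. For even $i$, the single sums give exactly $(3i,i),(3i+1,i),(3d,i)$, with only $(1,0),(3d,0)$ when $i=0$ and only $(3d,d)$ when $i=d$. The double sums give the run $3i+2\le x\le 3d-1$ for even $i$ and $3i\le x\le 3d$ for odd $i$. In the second case, the analogous count uses pairs $(3d+1-A,B)$, with the extra term $p^{3d(1-2s)}$ supplying $(1,0)$. It shows both sides equal $\bigcup_{0\le i\le d}\{(x,i):\max(1,3i)\le x\le 3d+1\}$. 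Your proposal does none of this for the second bullet.

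For the first bullet, your remark about type-$2$ overorders can in fact be turned into a shorter proof than the paper's. The type-$1$ parts of $P^{3d}_{(1^3)}(s)$ and $P^{3d}_{(3)}(s)$ coincide. The type-$2$ differences are a new Gorenstein term at $\alpha=\frac{3f-1}{2}$ for odd $f$, and the term at $\alpha=\frac32 f$ ceasing to be Gorenstein for even $f$. Together they add exactly $\sum_{i=0}^{d-1}p^{(3d-3i-1)(1-2s)+i(2-3s)}$. This is precisely the effect of replacing $\sum_{i=0}^{d'-2}$ by $\sum_{i=0}^{d'-1}$ on the right-hand side, so the first bullet follows from \eqref{eq:comparision_unram}. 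But this reduction has to be written out, not just anticipated.
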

\begin{proof}
\textbf{The case that $\mathbf{3|S(R_p)}$.}
 By Proposition \ref{prop:first formula for ram}, it suffices to show that
    \begin{equation}\label{eq:comparison_totram_3d}
    P^{3d}_{(1^3)}(s)=\sum_{d'=1}^d p^{3(d-d')(1-2s)}\Big(\sum_{i=0}^{d'}p^{(2-3s)i}+p^{1-2s}\sum_{i=0}^{d'-1}p^{(2-3s)i}+p^{2-4s}\sum_{i=0}^{d'-1}p^{(2-3s)i}\Big).
    \end{equation}
    As in the proof of the case $\sigma_p(E)=(3)$, we first rewrite each term of $P^{3d}_{(1^3)}(s)$ in the form $p^{A(1-2s)+B(2-3s)}$. Then, by reorganizing the indices of summations, we have
    \begin{equation}\label{eq:ram form AB}
    \begin{aligned}
    P^{3d}_{(1^3)}(s)=&\sum_{0<f\leq d,\ 2|f}p^{(3d-3f)(1-2s)+f(2-3s)}+\sum_{0\leq f<d,\ 2|f}p^{f(2-3s)}+\sum_{0<f\leq d,\ 2\nmid f}p^{(3d-3f+2)(1-2s)+(f-1)(2-3s)}\\
    &+\sum_{0\leq \alpha<\frac{d}{2}}\sum_{2\alpha<f\leq d}\Big(p^{(3d-2f-2\alpha)(1-2s)+2\alpha(2-3s)}+p^{(3d-2f-2\alpha-1)(1-2s)+(2\alpha+1)(2-3s)}\Big)\\
    &+\sum_{0\leq \alpha<\frac{d}{2}}\sum_{2d<f<3d-2\alpha}\Big(p^{(3d-f-2\alpha)(1-2s)+2\alpha(2-3s)}+p^{(3d-f-2\alpha-1)(1-2s)+(2\alpha+1)(2-3s)}\Big)\\
    &+\sum_{0\leq \alpha<\frac{d-1}{2}}\sum_{\ 2\alpha+2\leq f\leq d}\Big(p^{(3d-2f-2\alpha+1)(1-2s)+2\alpha(2-3s)}+p^{(3d-2f-2\alpha)(1-2s)+(2\alpha+1)(2-3s)}\Big).
    \end{aligned}
    \end{equation}
    We represent the term $p^{A(1-2s)+B(2-3s)}$ by the pair of numbers $(3d-A,B)$.
    Then it suffices to show that the corresponding multisets of pairs coincide since every monomial $p^{X+Ys}$ admits a unique representation in the form $p^{A(1-2s)+B(2-3s)}$.
    
    The three summations in the first line of \eqref{eq:ram form AB} are represented, for fixed even $i$ with $0\leq i\leq d$, by
    \[
    \left\{\begin{array}{l l}
    \{(1,0),(3d,0)\} &\text{if $i=0$;}\\
    \{(3i,i),(3i+1,i),(3d,i)\} &\text{if $0<i<d$;}\\
    \{(3d,d)\} &\text{if $i=d$.}
    \end{array}\right.
    \]
    The remaining three double summations in \eqref{eq:ram form AB} are represented, for fixed $i$ with $0\leq i\leq d$, by
    \[
    \left\{\begin{array}{l l}
    \{(3i+2,i),(3i+3,i),\dots,(3d-1,i)\} &\text{if $i$ is even;}\\
    \{(3i,i),(3i+1,i),\dots,(3d,i)\} &\text{if $i$ is odd.}
    \end{array}\right.
    \]
    Taking the union of these contributions (for even $i$, the boundary terms supply $(3i,i)$, $(3i+1,i)$ and $(3d,i)$, which fill in the gaps of the double-summation contribution), $P^{3d}_{(1^3)}(s)$ is the summation whose terms correspond to the set of pairs
    \[
    \bigcup_{0\leq i\leq d}\{(x,i):\max(1,3i)\leq x\leq 3d\}.
    \]
    On the other hand, the right-hand side of (\ref{eq:comparison_totram_3d}) corresponds to the set of pairs
    \begin{align*}
    &\bigcup_{d'=1}^{d}\Big(\{(3d',i)\}_{0\leq i\leq d'}\cup\{(3d'-1,i)\}_{0\leq i\leq d'-1}\cup\{(3d'-2,i)\}_{0\leq i\leq d'-1}\Big)\\
    &=\bigcup_{0\leq i\leq d}\{(x,i):\max(1,3i)\leq x\leq 3d\},
    \end{align*}
    where the equality is obtained by collecting, for each fixed $i$, the three residue classes modulo $3$ throughout the range $3i\leq x\leq 3d$ (the value $x=0$ being omitted). The two sets of pairs coincide, which proves (\ref{eq:comparison_totram_3d}).

\vspace{0.5em}\noindent
    \textbf{The case that $\mathbf{S(R_p)=3d+1}$.} By Proposition \ref{prop:first formula for ram}, it suffices to show that
    \begin{equation}\label{eq:comparison_totram_3d1}
    P^{3d+1}_{(1^3)}(s)=p^{3d(1-2s)}+\sum_{d'=1}^d p^{3(d-d')(1-2s)}\Big(\sum_{i=0}^{d'}p^{(2-3s)i}+p^{1-2s}\sum_{i=0}^{d'}p^{(2-3s)i}+p^{2-4s}\sum_{i=0}^{d'-1}p^{(2-3s)i}\Big).
    \end{equation}
    As before, we rewrite each term of $P^{3d+1}_{(1^3)}(s)$ in the form $p^{A(1-2s)+B(2-3s)}$ and reorganize the indices of summations to obtain
    \begin{align*}
    P^{3d+1}_{(1^3)}(s)=&\sum_{0<f\leq d+1,\ 2\nmid f}p^{(3d+3-3f)(1-2s)+(f-1)(2-3s)}+\sum_{0\leq f<d,\ 2|f}p^{f(2-3s)}+\sum_{0<f\leq d,\ 2|f}p^{(3d+1-3f)(1-2s)+f(2-3s)}\\
    &+\sum_{0\leq \alpha<\frac{d}{2}}\sum_{2\alpha+2\leq f\leq d+1}\Big(p^{(3d+2-2f-2\alpha)(1-2s)+2\alpha(2-3s)}+p^{(3d+1-2f-2\alpha)(1-2s)+(2\alpha+1)(2-3s)}\Big)\\
    &+\sum_{0\leq \alpha<\frac{d}{2}}\sum_{2d+1<f\leq 3d-2\alpha}\Big(p^{(3d+1-f-2\alpha)(1-2s)+2\alpha(2-3s)}+p^{(3d-f-2\alpha)(1-2s)+(2\alpha+1)(2-3s)}\Big)\\
    &+\sum_{0\leq \alpha<\frac{d}{2}}\sum_{\ 2\alpha+1\leq f\leq d}\Big(p^{(3d+1-2f-2\alpha)(1-2s)+2\alpha(2-3s)}+p^{(3d-2f-2\alpha)(1-2s)+(2\alpha+1)(2-3s)}\Big).
    \end{align*}
    We represent the term $p^{A(1-2s)+B(2-3s)}$ by the pair of numbers $(3d+1-A,B)$. The three summations in the first line are represented, for fixed even $i$ with $0\leq i\leq d$, by
    \[
    \left\{\begin{array}{l l}
    \{(1,0),(3d+1,0)\} &\text{if $i=0$;}\\
    \{(3i,i),(3i+1,i),(3d+1,i)\} &\text{if $0<i\leq d$,}
    \end{array}\right.
    \]
    while the remaining three double summations are represented, for fixed $i$ with $0\leq i\leq d$, by
    \[
    \left\{\begin{array}{l l}
    \{(3i+2,i),(3i+3,i),\dots,(3d,i)\} &\text{if $i$ is even;}\\
    \{(3i,i),(3i+1,i),\dots,(3d+1,i)\} &\text{if $i$ is odd.}
    \end{array}\right.
    \]
    Taking the union of these contributions, $P^{3d+1}_{(1^3)}(s)$ is the summation whose terms correspond to the set of pairs
    \[
    \bigcup_{0\leq i\leq d}\{(x,i):\max(1,3i)\leq x\leq 3d+1\}.
    \]
    On the other hand, the right-hand side of (\ref{eq:comparison_totram_3d1}) corresponds to the set of pairs
    \begin{align*}
    &\{(1,0)\}\cup\bigcup_{d'=1}^{d}\Big(\{(3d'+1,i)\}_{0\leq i\leq d'}\cup\{(3d',i)\}_{0\leq i\leq d'}\cup\{(3d'-1,i)\}_{0\leq i\leq d'-1}\Big)\\
    &=\bigcup_{0\leq i\leq d}\{(x,i):\max(1,3i)\leq x\leq 3d+1\},
    \end{align*}
    where the leading pair $(1,0)$ comes from the term $p^{3d(1-2s)}$, and the equality is again obtained by collecting, for each fixed $i$, the three residue classes modulo $3$ throughout the range $3i\leq x\leq 3d+1$. The two sets of pairs coincide, which proves (\ref{eq:comparison_totram_3d1}).
\end{proof}
\subsection{The case where $E$ splits at $p$}
\subsubsection{The case where $\sigma_p(E)=(1\ 2)$}\label{sec:reformulation for unramlin}
Recall the isomorphism $E_p \cong \mathbb{Q}_p \times E_p'$, where $E_p'$ is an unramified quadratic extension over $\mathbb{Q}_p$. 
Under this identification, the maximal order is given by $\mathcal{O}_{E_p} = \mathbb{Z}_p \times \mathcal{O}_{E'_p}$. 
We choose $x\in \Mfo_{E_p}^\times$ such that $\Mfo_{E'_p}=\mathbb{Z}_p[x]$.
Then, by \cite[Proposition 5.1]{CHL2}, any order in $E_p$ is explicitly written as
\[
\mathcal{O}_{a,b,c} = \mathbb{Z}_p\langle(1,1), (0,p^a), (0,p^b x+c)\rangle
\]
for some integers $a, b \ge 0$ and $c \in \mathbb{Z}_p$ satisfying the condition $a \le 2\min(b, \ord(c))$.
We emphasize that the integers $a$ and $b$ are intrinsic invariants of a given order $\mathcal{O}_{a,b,c}$, uniquely determined regardless of the choice of generators. 
Explicitly, if the conductor $f(\mathcal{O}_{a,b,c})$ of an order $\mathcal{O}_{a,b,c}$ in $\mathcal{O}_{E_p}$ is given by $(p^{f_1}) \times (p^{f_2})$, then these parameters are recovered by the relations $a = f_1$ and $b = S(\mathcal{O}_{a,b,c}) - f_1$ according to the proof of \cite[Theorem 5.2]{CHL2}.
Consequently, we use the following notation to denote these invariants: \begin{equation}\label{eq:invariants of unramlin}
a(\mathcal{O}_{a,b,c})=a\text{ and }b(\mathcal{O}_{a,b,c})=b.
\end{equation}
Note that $a(R_p)$ is even and $a(R_p)\leq 2b(R_p)$ by \cite[Corollary 5.3]{CHL2} since $R_p$ is Gorenstein.

\begin{proposition}\label{prop:first formula for unramlin}
Suppose that $\sigma_p(E)=(1\ 2)$. Let $a=a(R_p)$ and $b=b(R_p)$ (see \eqref{eq:invariants of unramlin}). Then we have
\begin{equation}\label{eq:unramlin_eq}
\begin{aligned}
\tilde{L}_p(s,R)=&p^{(a+b)s}\Big(p^{(a+b)(1-2s)}+(1+p^{-s})\sum_{0<b'\leq b}p^{(a+b-b')(1-2s)}\Big)+p^{(a+b)s}(1-p^{-2s})P^{a,b}_{(1\ 2)}(s),
         \end{aligned}
         \end{equation}
where
\begin{align*}    
        P^{a,b}_{(1\ 2)}(s)=&\sum_{\substack{0 < a' \leq \frac{a}{2}\\ 2 \mid a'}} \Big(p^{\frac{a'}{2}+(a+b-\frac{3}{2}a')(1-2s)}+\sum_{\frac{a'}{2}< b'\leq b-\frac{a'}{2}}(p^{\frac{a'}{2}}-p^{\frac{a'}{2}-1})p^{(a+b-a'-b')(1-2s)}\Big)\\&+\sum_{\substack{\frac{a}{2} < a' \leq a \\ 2 \mid a'}} p^{\frac{a}{2}-\frac{a'}{2}+(\frac{3a}{2}-\frac{3a'}{2})(1-2s)}
         \\&
         +(1+p^{1-s})\Bigg(\sum_{0<a'\leq \frac{a}{2},\ 2|a'}\Big(\sum_{\frac{a'}{2}< b'\leq b-\frac{a'}{2}}
        p^{\frac{a'}{2}-1+(a+b-a'-b')(1-2s)}+\sum_{b-\frac{a'}{2}<b' \leq b}p^{b-b'+(a+b-a'-b')(1-2s)}\Big)\\
        &+\sum_{\substack{0<a'\leq \frac{a}{2}\\ 2\nmid a'}}\sum_{\frac{a'}{2}\leq b'\leq b}
        p^{\min \left(\lfloor \frac{a'}{2} \rfloor, b-b' \right)+(a+b-a'-b')(1-2s)} + \sum_{\frac{a}{2}<a'\leq a}\sum_{ \frac{a'}{2}+b-\frac{a}{2}< b' \leq b}p^{b-b'+(a+b-a'-b')(1-2s)}\Bigg).
\end{align*} 
\end{proposition}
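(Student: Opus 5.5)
The plan follows the pattern of Propositions \ref{prop:first formula for unram} and \ref{prop:first formula for ram}. First I will enumerate the overorders of $R_p=\mathcal{O}_{a,b,c}$ via the parametrization of \cite[Proposition 5.1]{CHL2}, and then evaluate each summand of Definition \ref{def:local factor of L function}.

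An overorder $\mathcal{O}_{a',b',c'}$ contains $R_p$ exactly when $(0,p^a)$ and $(0,p^bx+c)$ lie in $\mathcal{O}_{a',b',c'}$. Writing $p^bx+c=p^{b-b'}(p^{b'}x+c')+(c-p^{b-b'}c')$, the conditions become $a'\le a$, $b'\le b$ and $c\equiv p^{b-b'}c' \pmod{p^{a'}}$. For each $(a',b')$ I will count the admissible classes $c'$ modulo the ambiguity in the generators. These counts should produce the multiplicities $p^{a'/2}$, $p^{a'/2}-p^{a'/2-1}$, $p^{b-b'}$, $p^{\min(\lfloor a'/2\rfloor,b-b')}$ and so on that appear in $P^{a,b}_{(1\ 2)}(s)$. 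The index is $[\mathcal{O}_{a',b',c'}:R_p]=p^{(a+b)-(a'+b')}$, since $S(\mathcal{O}_{a,b,c})=a+b$ by the relation $b=S-f_1$ recalled before the proposition.

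Next I will record the two arithmetic ingredients. For the Gorenstein condition I will use \cite[Corollary 5.3]{CHL2}: $\mathcal{O}_{a',b',c'}$ is Gorenstein if and only if $a'$ is even and, in the appropriate normalization, $a'=2\min(b',\ord(c'))$. Non-Gorenstein orders then receive the factor $1+p^{1-s}$. For the zeta ratio I look at residue fields. If $a'=0$, the order is $\mathbb{Z}_p\times\mathcal{O}'$ with $\mathcal{O}'$ an order in $E'_p$. Its residue fields are $\mathbb{F}_p$ together with $\mathbb{F}_{p^2}$ if $b'=0$, or $\mathbb{F}_p$ if $b'>0$. So $\zeta_{\mathcal{O}}/\zeta_{E_p}$ equals $1$ or $(1-p^{-2s})/(1-p^{-s})=1+p^{-s}$. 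If $a'>0$, the order is local with residue field $\mathbb{F}_p$, and the ratio is $(1-p^{-s})(1-p^{-2s})/(1-p^{-s})=1-p^{-2s}$.

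This already explains the shape of the formula. The $a'=0$ terms give the first bracket $p^{(a+b)(1-2s)}+(1+p^{-s})\sum_{0<b'\le b}p^{(a+b-b')(1-2s)}$; these orders $\mathbb{Z}_p\times\mathbb{Z}_p[p^{b'}x]$ are Gorenstein. All terms with $a'>0$ carry the common factor $1-p^{-2s}$ and are collected in $P^{a,b}_{(1\ 2)}(s)$.

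The main obstacle is the bookkeeping for $a'>0$. I have to split according to whether $a'\le a/2$ or $a/2<a'\le a$, the parity of $a'$, and whether $\ord(c')$ or $b'$ governs the conductor. Within each stratum I must determine which $c'$ give Gorenstein orders, since for even $a'$ only the valuation-exact ones do, which yields $p^{a'/2}$ versus $p^{a'/2}-p^{a'/2-1}$. I must also make sure that distinct $c'$ give distinct orders, so that $c'$ ranges over $\mathbb{Z}_p/(p^{\min(\cdot)})$ with the stated exponents.

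I would do this by fixing $(a',b')$, using the invariants $a(\mathcal{O}),b(\mathcal{O})$ from \eqref{eq:invariants of unramlin}, and reducing $c'$ modulo $p^{\min(\lfloor a'/2\rfloor,\,b-b')}$ after imposing containment of $R_p$. The Gorenstein exponents then come from reading off when the constraint $a'\le 2\min(b',\ord(c'))$ holds with equality. Finally I will sum the Gorenstein and non-Gorenstein contributions stratum by stratum, and these sums match the displayed expression for $P^{a,b}_{(1\ 2)}(s)$.
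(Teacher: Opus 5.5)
Your plan follows the paper's route. It uses the same parametrization $\mathcal{O}_{a',b',c'}$ of overorders, the same zeta ratios $1$, $1+p^{-s}$, $1-p^{-2s}$, the same index $p^{a+b-a'-b'}$, and the same Gorenstein criterion. The difference is in how the counting is done. The paper imports the per-$(a',b')$ counts of all overorders and of Gorenstein overorders from the proof of \cite[Theorem 5.2]{CHL2}, stated in terms of $\ord(c)$, and then splits $h=(1+p^{1-s})-p^{1-s}\cdot[\text{Gorenstein}]$. Your direct stratification reaches the displayed shape immediately. The terms of $P^{a,b}_{(1\ 2)}(s)$ outside the factor $1+p^{1-s}$ are exactly the Gorenstein strata:
$b'=a'/2$;
$\ord(c')=a'/2$ with $a'/2<b'\le b-a'/2$;
$b'=b-\frac a2+\frac{a'}2$ for even $a'>a/2$.

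One step you leave implicit must be stated, because it is the only place the hypothesis on $R_p$ enters. Your congruence $c\equiv p^{b-b'}c'\pmod{p^{a'}}$ depends on $c$. The counts become functions of $(a,b)$ alone only after you use that $R_p$ is Gorenstein, i.e. $a$ is even and $\min(b,\ord(c))=a/2$, so that you may take $\ord(c)=a/2$. The paper does this by replacing $\ord(c)$ with $S(R_p)-f_2=a/2$.

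With that normalization, count the admissible $c'\in p^{\lceil a'/2\rceil}\mathbb{Z}_p/p^{a'}\mathbb{Z}_p$. For $a'\le a/2$ there are $p^{\min(\lfloor a'/2\rfloor,\,b-b')}$ of them. For $a/2<a'\le a$ there are $p^{b-b'}$ if $b'\ge b-\frac a2+\lceil\frac{a'}2\rceil$, and none otherwise. This solvability cut-off in $b'$, rather than a mere reduction of $c'$ modulo $p^{\min(\lfloor a'/2\rfloor,b-b')}$, is what produces the two sums over $a/2<a'\le a$ in $P^{a,b}_{(1\ 2)}(s)$.
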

\begin{proof}
    By \cite[Proposition 5.1.(2)]{CHL2}, any overorder of $R_p=\Mfo_{a,b,c}$ is of the form $\Mfo_{a',b',c'}$.
    Moreover, \cite[Proposition 5.1.(4)]{CHL2} yields that $S(\mathcal{O}_{a,b,c})=a+b$, and $[\mathcal{O}_{a',b',c'}:R_p]=p^{a+b-a'-b'}$.
    If $a'=b'=0$ (so that $\mathcal{O}_{a',b',c'}=\mathcal{O}_{E_p}$), then maximal ideals of $\mathcal{O}_{a',b',c'}$ are $(p)\times \mathcal{O}_{E_p'}$ and $\Z_p\times (p)$, and thus $\zeta_{\mathcal{O}_{a',b',c'}}(s)=\frac{1}{(1-p^{-s})(1-p^{-2s})}$.
    If $a'=0$ and $b'\neq 0$ (so that $\mathcal{O}_{a',b',c'}= \mathbb{Z}_p\times \mathbb{Z}_p[p^{b'}x]$), then maximal ideals of $\mathcal{O}_{a',b',c'}$ are $(p)\times \mathbb{Z}_p[p^{b'}x]$ and $\Z_p\times (p,p^{b'}x)$, and thus $\zeta_{\mathcal{O}_{a',b',c'}}(s)=\frac{1}{(1-p^{-s})^2}$.
    Otherwise, $\mathcal{O}_{a',b',c'}$ is a local ring with the residue field $\mathbb{F}_p$.
    Therefore we have
    \[\frac{\zeta_{\mathcal{O}_{a',b',c'}}(s)}{\zeta_{E_p}(s)}=\left\{
    \begin{array}{l l}
        1 & \text{if $a'=b'=0$}; \\
        1+p^{-s} &\text{if $a'=0$ and $b'\neq 0$};\\
        1-p^{-2s}& \text{otherwise}.
    \end{array}\right.
    \]
    Then, the enumeration of overorders and Gorenstein overorders $\mathcal{O}_{a',b',c'}$ of $\mathcal{O}_{a,b,c}$ for fixed $a'$ and $b'$ in the proof of \cite[Theorem 5.2]{CHL2} yields that 
    \begin{align*}
        \tilde{L}_p(s,R)=&p^{(a+b)s}\Big(p^{(a+b)(1-2s)}+(1+p^{-s})\sum_{0<b'\leq b}p^{(a+b-b')(1-2s)}\Big)\\
        &+p^{(a+b)s}(1-p^{-2s})\Bigg((1+p^{1-s})\sum_{0 \leq b' \leq b} \Big( \sum_{\substack{0 < a' \leq \min(a,\ord(c)) \\ a' \leq 2b' }} p^{\min \left(\lfloor \frac{a'}{2} \rfloor, b-b' \right)+(a+b-a'-b')(1-2s)}\\ &+ \sum_{\substack{\ord(c) < a' \leq a \\ a' \leq 2b' \\ \lceil \frac{a'}{2} \rceil - b' \leq \ord(c) - b}} p^{b-b'+(a+b-a'-b')(1-2s)} \Big)  -p^{1-s}\Big(\sum_{\substack{0 < a' \leq \min(a, \ord(c)) \\ 2 \mid a'}} p^{\min \left( \frac{a'}{2}, b-\frac{a'}{2} \right)+(a+b-\frac{3}{2}a')(1-2s)}
        \\&+ \sum_{\substack{\ord(c) < a' \leq a \\ 2 \mid a' \\ b \leq \ord(c)}} p^{b-\frac{a'}{2}+(a+b-\frac{3a'}{2})(1-2s)}+ \sum_{0 < b' \leq b} \sum_{\substack{0 < a' \leq \min(a,\ord(c)) \\ a' < 2b' \\ 2 \mid a' \\ \frac{a'}{2} \leq b-b'}} \left(p^{\frac{a'}{2}} - p^{\frac{a'}{2}-1} \right)p^{(a+b-a'-b')(1-2s)} \\
        &+ \sum_{\substack{\ord(c) < a' \leq a \\ 2 \mid a' \\ \ord(c) < b}} p^{\ord(c)-\frac{a'}{2}+(a+\ord(c)-\frac{3a'}{2})(1-2s)}\Big)\Bigg).  
    \end{align*}
    Here, we apply the following reformulation to a component of $\tilde{L}_p(s,R)$:
    \begin{align*}
     &p^{S(R_p)s}(1-p^{-2s})
    \sum_{\substack{R_p\subset\mathcal{O}_p\subset \mathcal{O}_{E_p}\\ \zeta_{\mathcal{O}_p}(s)/\zeta_{E_p}(s)=1-p^{-2s}}}h_{\mathcal{O}_p}(s)[\mathcal{O}_p:R_p]^{1-2s} 
    \\&=
         p^{S(R_p)s}(1-p^{-2s})\Big(
    \sum_{\substack{R_p\subset\mathcal{O}_p\subset \mathcal{O}_{E_p}\\ \zeta_{\mathcal{O}_p}(s)/\zeta_{E_p}(s)=1-p^{-2s}}}(1+p^{1-s})[\mathcal{O}_p:R_p]^{1-2s} - \sum_{\substack{R_p\subset\mathcal{O}_p\subset \mathcal{O}_{E_p}\\\zeta_{\mathcal{O}_p}(s)/\zeta_{E_p}(s)=1-p^{-2s}\\ \mathcal{O}_p :\ Gorenstein}}p^{1-s}[\mathcal{O}_p:R_p]^{1-2s}
    \Big).
    \end{align*}
    By the same argument in the proof of \cite[Theorem 5.2]{CHL2}, replacing $\ord(c)$ with $S(R_p)-f_2$ leaves the sum invariant, where $(p^{f_1})\times (p^{f_2})\subset \mathbb{Z}_p\times \mathcal{O}_{E_p'} $ is the conductor of an order $R_p$ in $\mathcal{O}_{E_p}$.
    Since $R_p$ is Gorenstein, \cite[Corollary 5.3]{CHL2} yields that \[S(R_p)-f_2=\frac{f_1}{2}=\frac{a}{2}.\] 
    Therefore, we replace $\ord(c)$ with $\frac{a}{2}$. 
    Interchanging the order of the double sums and applying the identity $-p^{1-s} = 1 - (1+p^{1-s})$ yields the desired formula.
\end{proof}
\begin{theorem}\label{thm:reformulation_for_unramlin}
Suppose that $\sigma_p(E)=(1\ 2)$. Let $a=a(R_p)$ and $b=b(R_p)$ (see \eqref{eq:invariants of unramlin}). Then we have
    \begin{align*}
    \tilde{L}_p(s,R)=
&p^{(a+b)(1-s)}+(1+p^{-s})\Big(
        p^{(a+b-1)(1-s)+s}\sum_{i=0}^{\frac{a}{2}-1}p^{(3s-1)i}+
        p^{(b+\frac{a}{2}-1)(1-s)+as}\sum_{i=0}^{b-\frac{a}{2}-1}p^{(2s-1)i}\Big)\\
        &+p^{(a+b)s}(1-p^{-2s})\Bigg(\sum_{d'=0}^{b-\frac{a}{2}-1}p^{(1-2s)d'}\sum_{i=0}^{\frac{a}{2}}p^{(2-3s)i}
        \\&+p^{(b-\frac{a}{2})(1-2s)}\sum_{d'=1}^{\frac{a}{2}}p^{(3-6s)(\frac{a}{2}-d')}\Big(\sum_{i=0}^{d'}p^{(2-3s)i}+p^{1-2s}\sum_{i=0}^{d'-1}p^{(2-3s)i}+p^{2-4s}\sum_{i=0}^{d'-2}p^{(2-3s)i}\Big)\Bigg).
    \end{align*}
\end{theorem}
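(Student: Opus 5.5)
Starting from Proposition \ref{prop:first formula for unramlin}, we compare its right-hand side with the claimed expression term by term. We handle the part without the factor $(1-p^{-2s})$ and the part with it separately; the two parts should not mix, so each part must match on its own.

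\textbf{First part.} Expand
\[
p^{(a+b)s}\Big(p^{(a+b)(1-2s)}+(1+p^{-s})\sum_{0<b'\leq b}p^{(a+b-b')(1-2s)}\Big).
\]
The first term is $p^{(a+b)(1-s)}$. For the remaining sum, note that
\[
p^{(a+b)s}\,p^{(a+b-b')(1-2s)}=p^{(a+b-b')(1-s)}\,p^{b's}.
\]
We split the range $0<b'\leq b$ at $b'=b-\frac a2$:
\begin{itemize}
\item For $b'>b-\frac a2$, reindex by $b'=b-\frac a2+1+i$. Write $p^{(a+b-b')(1-s)+b's}$ as $p^{(a+b-1)(1-s)+s}\,p^{(3s-1)j}$; the exponents in $s$ and the constants should match after the substitution $j=\frac a2-1-i$. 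This produces the first geometric sum, over $0\le i\le \frac a2-1$.
\item For $b'\leq b-\frac a2$, the same manipulation should produce $p^{(b+\frac a2-1)(1-s)+as}\sum_{i}p^{(2s-1)i}$.
\end{itemize}
This is a routine check of linear exponents.

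\textbf{Second part.} It suffices to prove the identity
\[
P^{a,b}_{(1\ 2)}(s)=\sum_{d'=0}^{b-\frac a2-1}p^{(1-2s)d'}\sum_{i=0}^{\frac a2}p^{(2-3s)i}+p^{(b-\frac a2)(1-2s)}Q_{\frac a2}(s),
\]
where $Q_{\frac a2}(s)$ denotes the bracket appearing in Theorem \ref{thm:reformulation_for_unram} with $d=\frac a2$. Following the method of Theorems \ref{thm:reformulation_for_unram} and \ref{thm:reformulation_for_ram}, we proceed in three steps.
\begin{enumerate}
\item Expand $(1+p^{1-s})$ and the differences $p^{a'/2}-p^{a'/2-1}$ in every term of $P^{a,b}_{(1\ 2)}(s)$, and write each resulting monomial uniquely as $p^{A(1-2s)+B(2-3s)}$, recording the pair $(a+b-A,B)$. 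Note that $p^{1-s}p^{k}$ shifts $(A,B)$ by converting via $1-s=-(1-2s)+(2-3s)$, and $p^{k}=p^{-3k(1-2s)+2k(2-3s)}$ up to bookkeeping. The cancellation of the negative terms $-p^{a'/2-1}$ against part of $(1+p^{1-s})p^{a'/2-1}$ should be carried out first, so that every remaining coefficient is positive.
\item Split by the range of $a'$ ($a'\le a/2$ even, $a'\le a/2$ odd, $a'>a/2$), and within each range by $b'$. Each block should produce, for a fixed $B=i$, an interval of $A$-values.
\item Compute the multiset for the right-hand side. The first double sum gives the rectangle $\{(x,i): b-\frac a2 < \ldots\}$, indexed by $0\le i\le a/2$ and $b-\frac a2$ consecutive values of $x$. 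The second piece is, by the computation already done in the proof of Theorem \ref{thm:reformulation_for_unram}, the set
\[
\{(x,0):2\le x\le 3\tfrac a2\}\cup\bigcup_{1\le i\le a/2}\Big(\{(3i,i)\}\cup\{(x,i):3i+2\le x\le \tfrac{3a}{2}\}\Big),
\]
shifted by $b-\frac a2$ in the $A$-coordinate.
\end{enumerate}
Matching the multisets then proves the theorem.

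\textbf{Main obstacle.} The hard step is step (2) for the odd-$a'$ block, because of the exponent $\min(\lfloor a'/2\rfloor,b-b')$. Here we split the $b'$-sum at $b'=b-\lfloor a'/2\rfloor$. The piece with $b'\le b-\lfloor a'/2\rfloor$ behaves like the even blocks. The tail piece needs to be merged with the even-$a'$ tail $\sum_{b-a'/2<b'\le b}$ and with the $a'>a/2$ block so that together they fill the triangular region near the diagonal $x=3i$. We would first tabulate the pairs for small $a$ (e.g. $a=2,4$) and general $b$ to identify which block supplies which gap, and then verify that each point of the target set, for a fixed $i$, is hit exactly once, as was done in Theorem \ref{thm:reformulation_for_unram}.
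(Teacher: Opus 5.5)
There is a genuine gap in your opening premise. The part without $(1-p^{-2s})$ and the part with it do mix, so neither of your two sub-identities holds in general.

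\textbf{The first part fails.} You have $p^{(a+b)s}\sum_{0<b'\le b}p^{(a+b-b')(1-2s)}=\sum_{b'=1}^{b}p^{(a+b-b')(1-s)+b's}$, which is a single geometric progression with ratio $p^{2s-1}$. The theorem's first sum has ratio $p^{3s-1}$, and its second sum carries the prefactor $p^{as}$ rather than $p^{(a/2+1)s}$. So the ``routine check'' fails.
\begin{itemize}
\item For $a=4$, $b=2$ the proposition gives $(1+p^{-s})(p^{5-4s}+p^{4-2s})$, while the theorem gives $(1+p^{-s})(p^{5-4s}+p^{4-s})$. The difference $(1-p^{-2s})p^{4-s}$ has to be absorbed by the second part.
\item For $a=0$, $b\ge 1$, one has $P^{0,b}_{(1\ 2)}=0$, while the theorem's $(1-p^{-2s})$-part is $p^{bs}(1-p^{-2s})\sum_{d'=0}^{b-1}p^{(1-2s)d'}\neq 0$.
\end{itemize}
For $a\ge 2$, using $(1-p^{-s})\sum_{i=0}^{d'-1}p^{-is}=1-p^{-d's}$, one checks that the theorem's non-$(1-p^{-2s})$ part equals the proposition's first bracket plus $p^{(a+b)s}(1-p^{-2s})\mathcal{D}(s)$, where
\[
\mathcal{D}(s)=\sum_{d'=0}^{a/2-1}p^{(a+b-1-d')(1-2s)+d's}\sum_{i=0}^{d'-1}p^{-is}+\sum_{d'=a/2}^{b-1}p^{(a+b-1-d')(1-2s)+(a/2-1)s}\sum_{i=0}^{a/2-2}p^{-is}.
\]
This $\mathcal{D}$ is exactly the family (ii) that the paper adds to the second bracket. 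It vanishes for $a=2$ and is nonzero for every $a\ge 4$, so your decoupling is valid only when $a=2$.

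\textbf{The second part fails correspondingly.} Your target $P^{a,b}_{(1\ 2)}=(\text{rectangle})+p^{(b-a/2)(1-2s)}Q_{a/2}$ is false for $a\ge 4$. What is true, and what the paper proves by the same pair-multiset method you outline, is $P^{a,b}_{(1\ 2)}=(\text{rectangle})+\mathcal{D}+p^{(b-a/2)(1-2s)}Q_{a/2}$.

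No tabulation can rescue your version. Record $p^{A(1-2s)+i(2-3s)}$ as $(A,i)$.
\begin{itemize}
\item Your right side is multiplicity-free: the rectangle has $A<b-a/2$, the shifted set has $A\ge b-a/2$, and each is a set.
\item But $P^{4,4}_{(1\ 2)}$ contains $p^{4-7s}$ and $p^{5-9s}$, i.e.\ the pairs $(2,1)$ and $(3,1)$, each twice.
\item For $a=4$, $b=2$, your right side has nine monomials against ten in $P^{4,2}_{(1\ 2)}$; it misses $p^{4-7s}$.
\end{itemize}
In general, $\mathcal{D}$ contributes precisely the pairs $(A,i)$ with $1\le i\le a/2-1$ and $a-2i\le A\le a+b-3i-1$. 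That is a second copy of the (possibly empty) central segment $a-2i\le A\le a+b-3i-2$ of each such row, plus the single point $A=a+b-3i-1$ that the other two families leave empty.

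So your step ``each point of the target set is hit exactly once'' must be replaced by this bookkeeping with multiplicities. The case $a=0$ needs its own direct computation, as in the paper.
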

\begin{proof}
Plugging in $a=0$ in the expression (\ref{eq:unramlin_eq}) in Proposition \ref{prop:first formula for unramlin}, we have
\begin{align*}
    \tilde{L}_p(s,R)=&p^{bs}\Big(p^{b(1-2s)}+(1+p^{-s})\sum_{0<b'\leq b}p^{(b-b')(1-2s)}\Big)\\
    =& p^{b(1-s)}+(1+p^{-s})p^{bs}\sum_{b'=0}^{b-1}p^{(1-2s)b'}\\
    =& p^{b(1-s)}+(1+p^{-s})\Big( p^{(b-1)(1-s)}\sum_{i=0}^{b-1}p^{(2s-1)i}+(1-p^{-s})p^{bs}\sum_{d'=0}^{b-1}p^{(1-2s)d'}\Big).
\end{align*}
This proves the theorem for $a=0$.

In the case where $a\geq 2$, we claim that the expression (\ref{eq:unramlin_eq}) for $\tilde{L}_p(s,R)$ equals the following one, which is the assertion of the theorem after factoring out $p^{(a+b)s}$:
\begin{equation}\label{eq:reformulation of state in thm of unramlin}
\begin{aligned}
    &p^{(a+b)s}\Bigg(p^{(a+b)(1-2s)}+(1+p^{-s})\sum_{1\leq i\leq b}p^{(a+b-i)(1-2s)}\Bigg)\\
        &+p^{(a+b)s}(1-p^{-2s})\Bigg(\sum_{d'=0}^{b-\frac{a}{2}-1}p^{(1-2s)d'}\sum_{i=0}^{\frac{a}{2}}p^{(2-3s)i}
        +\sum_{d'=0}^{a/2-1}p^{(a+b-1-d')(1-2s)+d's}\sum_{i=0}^{d'-1}p^{-is}\\
        &+\sum_{d'=a/2}^{b-1}p^{(a+b-1-d')(1-2s)+(a/2-1)s}\sum_{i=0}^{a/2-2}p^{-is}
        \\&+p^{(b-\frac{a}{2})(1-2s)}\sum_{d'=1}^{\frac{a}{2}}p^{(3-6s)(\frac{a}{2}-d')}(\sum_{i=0}^{d'}p^{(2-3s)i}+p^{1-2s}\sum_{i=0}^{d'-1}p^{(2-3s)i}+p^{2-4s}\sum_{i=0}^{d'-2}p^{(2-3s)i})\Bigg)
\end{aligned}
\end{equation}
The first line is identical with the first bracketed term of $\tilde{L}_p(s,R)$ in (\ref{eq:unramlin_eq}), so it suffices to prove that the sum multiplied by $p^{(a+b)s}(1-p^{-2s})$ and $P_{(1\ 2)}^{a,b}(s)$ coincide. We write $P_{(1\ 2)}^{a,b}(s)$ as follows
\begin{equation}\label{eq:LHS of unramlin}
P_{(1\ 2)}^{a,b}(s)=\mathcal{A}(s)+(1+p^{1-s})\mathcal{C}(s),
\end{equation}
where
\begin{align*}
\mathcal{A}(s)=&\sum_{\substack{0 < a' \leq \frac{a}{2}\\ 2 \mid a'}} \Big(p^{\frac{a'}{2}+(a+b-\frac{3}{2}a')(1-2s)}+\sum_{\frac{a'}{2}< b'\leq b-\frac{a'}{2}}(p^{\frac{a'}{2}}-p^{\frac{a'}{2}-1})p^{(a+b-a'-b')(1-2s)}\Big)\\&+\sum_{\substack{\frac{a}{2} < a' \leq a \\ 2 \mid a'}} p^{\frac{a}{2}-\frac{a'}{2}+(\frac{3a}{2}-\frac{3a'}{2})(1-2s)},\\
\mathcal{C}(s)=&\sum_{\substack{0<a'\leq \frac{a}{2}\\ 2\mid a'}}\Big(\sum_{\frac{a'}{2}< b'\leq b-\frac{a'}{2}}p^{\frac{a'}{2}-1+(a+b-a'-b')(1-2s)}+\sum_{b-\frac{a'}{2}<b' \leq b}p^{b-b'+(a+b-a'-b')(1-2s)}\Big)\\
&+\sum_{\substack{0<a'\leq \frac{a}{2}\\ 2\nmid a'}}\sum_{\frac{a'}{2}\leq b'\leq b-\frac{a'}{2}}p^{\lfloor \frac{a'}{2}\rfloor+(a+b-a'-b')(1-2s)}+\sum_{\substack{0<a'\leq \frac{a}{2}\\ 2\nmid a'}}\sum_{b-\frac{a'}{2}< b'\leq b}p^{ b-b'+(a+b-a'-b')(1-2s)}
\\&+\sum_{\frac{a}{2}<a'\leq a}\sum_{\frac{a'}{2}+b-\frac{a}{2}<b'\leq b}p^{b-b'+(a+b-a'-b')(1-2s)},
\end{align*}
and we denote by $Q_{(1\ 2)}^{a,b}(s)$ the bracket multiplied by $p^{(a+b)s}(1-p^{-2s})$ in \eqref{eq:reformulation of state in thm of unramlin},
\begin{equation}\label{eq:rhs_of_unramsplit}
\begin{aligned}
Q_{(1\ 2)}^{a,b}(s)=&\sum_{d'=0}^{b-\frac{a}{2}-1}p^{(1-2s)d'}\sum_{i=0}^{\frac{a}{2}}p^{(2-3s)i}&\text{(i)}\\
&+\sum_{d'=0}^{a/2-1}p^{(a+b-1-d')(1-2s)+d's}\sum_{i=0}^{d'-1}p^{-is}
+\sum_{d'=a/2}^{b-1}p^{(a+b-1-d')(1-2s)+(a/2-1)s}\sum_{i=0}^{a/2-2}p^{-is} &\text{(ii)}\\
&+p^{(b-\frac{a}{2})(1-2s)}\sum_{d'=1}^{\frac{a}{2}}p^{(3-6s)(\frac{a}{2}-d')}\Big(\sum_{i=0}^{d'}p^{(2-3s)i}+p^{1-2s}\sum_{i=0}^{d'-1}p^{(2-3s)i}+p^{2-4s}\sum_{i=0}^{d'-2}p^{(2-3s)i}\Big).&\text{(iii)}
\end{aligned}
\end{equation}
Thus it suffices to show that
\begin{equation}\label{eq:comparison_splitunram}
P_{(1\ 2)}^{a,b}(s)=Q_{(1\ 2)}^{a,b}(s).
\end{equation}

Throughout we abbreviate $a=2m$ and $b=m+k$, so that $a$ even and $a\leq 2b$ amount to $m\geq 1$ and $k\geq 0$, and $a+b=3m+k=:T$.
As in the proof of the case $\sigma_p(E)=(3)$ and $(1^3)$, we rewrite every monomial occurring on either side of \eqref{eq:comparison_splitunram} in the form $p^{A(1-2s)+B(2-3s)}$, and we record it by the pair $(A,B)$. 
Since a monomial $p^{X+Ys}$ admits a unique representation in the form $p^{A(1-2s)+B(2-3s)}$, it suffices to show that the corresponding multisets of pairs obtained from $P_{(1\ 2)}^{a,b}(s)$ and $Q_{(1\ 2)}^{a,b}(s)$ coincide.
Thus we compute both and show they equal the same multiset
\begin{equation}\label{eq:Mset_splitunram}
\begin{split}
\mathcal{M}=
\Big(\{(A,0): 0\leq A \leq T-2\}\sqcup
\bigsqcup_{1\leq i\leq m}&\{(A,i): 0\leq A\leq T-3i\}\Big) \\
&\sqcup \Big(\bigsqcup_{1\leq i\leq m-1}\{(A,i): 2(m-i)\leq A\leq T-3i-2\}\Big),
\end{split}
\end{equation}
the first union recording the monomials with multiplicity one and the second recording the central segment of each row $1\leq i\leq m-1$ that occurs a \emph{second} time. 

\emph{$Q_{(1\ 2)}^{a,b}(s)$ equals $\mathcal{M}$.} 
The first family (i) $\sum_{d'=0}^{b-\frac{a}{2}-1}p^{(1-2s)d'}\sum_{i=0}^{\frac{a}{2}}p^{(2-3s)i}$ of $Q_{(1\ 2)}^{a,b}(s)$ clearly corresponds to \[\bigsqcup_{0\leq i \leq m}\{(A,i): 0\leq A \leq k-1\}.\] 
For the family (ii) of $Q_{(1\ 2)}^{a,b}(s)$, we write $p^{-is}= p^{-i(2-3s)+2i(1-2s)}$. 
After the substitution $d'$ to $A$ and reindexing, the family (ii) of $Q_{(1\ 2)}^{a,b}(s)$ becomes
\[\sum_{1\leq i\leq m-1}\ \sum_{2(m-i)\leq A\leq T-3i-1}p^{A(1-2s)+i(2-3s)},\]
so that corresponds to $\bigsqcup_{1\leq i \leq m-1}\{(A,i): 2(m-i)\leq A \leq T-3i-1\}$.
On the other hand, according to the proof of Theorem \ref{thm:reformulation_for_unram}, the family (iii) of $Q_{(1\ 2)}^{a,b}(s)$ contributes the following set of pairs 
\begin{align*}
        \{(A,0): k\leq A\leq T-2\}\cup\bigcup_{1\leq i\leq m-1}
        \Big(\{(A,i): k\leq A \leq T-3i-2\}\cup (T-3i,i)\Big)
        \cup \{(k,m)\}.
    \end{align*}
One checks that (i)+(iii) only contributes the first union in the right-hand side of \eqref{eq:Mset_splitunram} at most one.
Family (ii) then adds a second copy of each pair $(A,i)$ with $1\leq i \leq m-1$ and $2(m-i)\leq A \leq T-3i-2$.
Indeed, Family (ii) initially produces one additional pair with $A=T-3i-1$, but this unique overhang is cancelled by the corresponding gap in the contribution from Families (i) and (iii).  
Hence the pair-multiset of $Q_{(1\ 2)}^{a,b}(s)$ corresponds to exactly $\mathcal{M}$.



\emph{$P_{(1\ 2)}^{a,b}(s)$ equals $\mathcal{M}$.} In the second summand of $\mathcal{A}(s)$, the negative part $-p^{\frac{a'}2-1}p^{(T-a'-b')(1-2s)}$ (summed over even $a'$ and $\frac{a'}2<b'\leq m+k-\frac{a'}2$) is identical to the first inner sum of $\mathcal{C}(s)$.
Consequently, in $P_{(1\ 2)}^{a,b}(s)=\mathcal{A}(s)+(1+p^{1-s})\mathcal{C}(s)$ the term $-p^{\frac{a'}2-1}(\cdots)$ of $\mathcal A(s)$ cancels that inner sum, leaving
\[
P_{(1\ 2)}^{a,b}(s)=\widehat{\mathcal A}(s)+\widehat{\mathcal C}(s)+p^{1-s}\,\mathcal{C}(s),
\]
where $\widehat{\mathcal A}(s)$ is $\mathcal{A}(s)$ with the $-p^{\frac{a'}2-1}(\cdots)$ removed and $\widehat{\mathcal C}(s)$ is $\mathcal{C}(s)$ with its first inner sum removed.
Moreover, in $\widehat{\mathcal{C}}(s)$ and $\mathcal{C}(s)$, the summations with $0<a' \leq m$ and $m+k-\frac{a'}{2}<b'\leq m+k$ for $2\mid a'$ and $2\nmid a'$ are structurally equivalent under the floor function, we combine them for $0<a'\leq m$:
\[
\sum_{0 < a' \leq m} \sum_{m+k-\lfloor a'/2 \rfloor < b' \leq m+k} p^{(m+k-b')+(T-a'-b')(1-2s)}.
\] 

We decompose $\widehat{\mathcal{A}}(s)$ into the following sums, so that  $\widehat{\mathcal{A}}(s)=\mathcal{A}_1(s)+\mathcal{A}_2(s)+\mathcal{A}_3(s)$:
\begin{gather*}
\mathcal{A}_1(s) = \sum_{\substack{0 < a' \le m \\ 2 \mid a'}} p^{\frac{a'}{2} + (T-\frac{3a'}{2})(1-2s)},\quad
\mathcal{A}_{2}(s) = \sum_{\substack{0 < a' \le m \\ 2 \mid a'}} \sum_{\frac{a'}{2} < b' \le m+k-\frac{a'}{2}} p^{\frac{a'}{2}+(T-a'-b')(1-2s)},
\\
\mathcal{A}_{3}(s) = \sum_{\substack{m < a' \le 2m \\ 2 \mid a'}} p^{m-\frac{a'}{2} + (3m-\frac{3a'}{2})(1-2s)}.
\end{gather*}
On the other hand, we define the following terms so that $\mathcal{C}(s)=\mathcal{C}_1(s)+\mathcal{C}_2(s)+\mathcal{C}_3(s)+\mathcal{C}_4(s)$ and $\widehat{C}(s)=\mathcal{C}_2(s)+\mathcal{C}_3(s)+\mathcal{C}_4(s)$:
\begin{align*}
&\mathcal{C}_1(s) = \sum_{\substack{0 < a' \le m \\ 2 \mid a'}} \sum_{\frac{a'}{2} < b' \le m+k-\frac{a'}{2}} p^{\frac{a'}{2}-1 + (T-a'-b')(1-2s)}, 
&\mathcal{C}_2(s) = \sum_{\substack{0 < a' \le m }} \sum_{m+k-\frac{a'}{2} < b' \le m+k} p^{m+k-b' + (T-a'-b')(1-2s)}, \\
&\mathcal{C}_3(s) = \sum_{\substack{0 < a' \le m \\ 2 \nmid a'}} \sum_{\frac{a'}{2} \le b' \le m+k-\frac{a'}{2}} p^{\lfloor \frac{a'}{2}\rfloor + (T-a'-b')(1-2s)}, &\mathcal{C}_4(s) = \sum_{m < a' \le 2m} \sum_{\frac{a'}{2}+k < b' \le m+k} p^{m+k-b' + (T-a'-b')(1-2s)}.
\end{align*}

Passing to the form $p^{A(1-2s)+B(2-3s)}$, 
for any base term $p^{X+Y(1-2s)}$ appearing in $\mathcal{A}_i(s)$ and $\mathcal{C}_j(s)$, equating exponents gives $B=2X$ and $A=Y-3X$. Therefore, $\widehat{\mathcal{A}}(s)+\hat{\mathcal{C}}(s)$ in $P_{(1\ 2)}^{a,b}(s)$ generate pairs $(A,i)$ solely for even $i$.
Conversely, the shifted terms $p^{1-s} \cdot p^{X+Y(1-2s)}$  shift the indices to $B=2X+1$ and $A=Y-3X-1$. Thus, $p^{1-s}\mathcal{C}(s)$ generates pairs $(A,i)$ solely for odd $i$. 
We now analyze the intervals of $A$ in $(A,i)$ generated for fixed $i$.

\vspace{0.5em}
\noindent\textbf{Case 1: $i=0$ or $m$.}\\
In the case that $i=0$, a pair $(A,0)$ arises solely from $\widehat{\mathcal{A}}(s)+\widehat{\mathcal{C}}(s)$. In particular, it arises from $\mathcal{A}_3(s)$, $\mathcal{C}_2(s)$, $\mathcal{C}_3(s)$, and $\mathcal{C}_4(s)$. 
\begin{itemize}
    \item From $\mathcal{A}_3(s)$, for $a'=2m$, we have  $(0,0)$.
    \item From $\mathcal{C}_4(s)$, for $b'=m+k$, we have $(A,0)$ where $A$ ranges over $[1,m-1]$.
    \item From $\mathcal{C}_2(s)$, for $b'=m+k$, we have $(A,0)$ where $A$ ranges over $[m,2m-1]$.
    \item From $\mathcal{C}_3(s)$, for $a'=1$, we have $(A,0)$ where $A$ ranges over $[2m,T-2]$.
\end{itemize}
Taking the union of these intervals gives $[0,T-2]$.
We now consider the case $i=m$. If $m$ is even, pairs $(A,m)$ arise from $\mathcal{A}_1(s)+\mathcal{A}_2(s)$, yielding the interval $[0,T-3m]$ for $A$. 
If $m$ is odd, pairs $(A,m)$ arise from $p^{1-s}(\mathcal{C}_2(s)+\mathcal{C}_3(s))$, which leads to the same interval $[0,T-3m]$ for $A$.


\vspace{0.5em}
\noindent \textbf{Case 2: Even $i$ ($1 \le i \le m-1$).} \\
Pairs $(A,i)$ with even $i$ arise solely from $\widehat{\mathcal{A}}(s)+ \widehat{\mathcal{C}}(s)$. 
\begin{itemize}
    \item From $\mathcal{A}_3(s)$, we have $i=2(m -\frac{a'}{2})=2m-a'$
    The $A$-coordinate is $3m-\frac{3a'}{2} - 3(m -\frac{a'}{2}) = 0$.
    \item 
    From $\mathcal{C}_4(s)$ and $\mathcal{C}_2(s)$, we have $i=2(m+k-b')$. The $A$-coordinate is $2m-a'-i$ with $m<a'<2m-i$ in $\mathcal{C}_4(s)$ and $2m-a'-i$ with $i<a'\leq m$ in $\mathcal{C}_2(s)$. Thus, $A$ covers the integer interval $[1,2m-2i-1]$.
    \item
    From $\mathcal{C}_3(s)$, we have $i=2\cdot \frac{a'-1}{2}=a'-1$ since $a'$ is odd. 
    The $A$-coordinate is $T-i-1-b'-3\cdot \frac{i}{2}$ where $\frac{i}{2}+1\leq b'\leq m+k-\frac{i}{2}-1$, and
    thus $A$ spans the integer interval $[2m-2i,T-3i-2]$.
    
    \item From $\mathcal{A}_1(s)$, we have $i=2\cdot \frac{a'}{2}=a'$.
    The $A$-coordinate is $(T-\frac{3a'}{2}) - 3\cdot \frac{a'}{2} = T-3i$.
    
    \item From $\mathcal{A}_{2}(s)$, we have $i = a'$. The $A$-coordinate is $T-i-b' - 3\cdot \frac{i}{2} = T - b' - \frac{5i}{2}$ with $b'$ ranges from $\frac{i}{2}+1\leq b'\leq m+k-\frac{i}{2}$, and thus $A$ spans the interval $[2m-2i, T-3i-1]$.
\end{itemize}
Taking the multiset union of these intervals gives $[0, T-3i] \sqcup [2(m-i), T-3i-2]$, where the overlapping region $[2(m-i), T-3i-2]$ appears with multiplicity $2$.

\vspace{0.5em}
\noindent \textbf{Case 3: Odd $i$ ($1 \le i \le m-1$).} \\
Pairs $(A,i)$ with odd $i$ arise solely from $p^{1-s}\mathcal{C}(s)$. 
\begin{itemize}
    \item By $\mathbf{Case\ 1}$ and $\mathbf{Case\ 2}$, for an even $i'=i-1$ with $0\leq i' \leq m-1$, the term $\mathcal{C}_2(s)+\mathcal{C}_3(s)+\mathcal{C}_4(s)$ generates pairs $(A',i')$ where $A' \in [1, T-3i'-2]$. 
    Applying the shift $p^{1-s}=p^{(2-3s)-(1-2s)}$, 
    the term $p^{1-s}(\mathcal{C}_2(s)+\mathcal{C}_3(s)+\mathcal{C}_4(s))$ shifts these to pairs $(A,i)=(A'-1,i'+1)$, where $A$ ranges over $[0, T-3i]$.
    
    \item From $p^{1-s}\mathcal{C}_1(s)$, we have $i=(a'-2)+1$.
    The $A$-coordinate is calculated as $(T-a'-b') - 3(\frac{a'}{2}-1) - 1 = T - \frac{5a'}{2} - b' + 2$. As $b'$ ranges from $\frac{i+1}{2}+1$ to $m+k-\frac{i+1}{2}$, $A$ spans the interval $[2m-2i, T-3i-2]$.
\end{itemize}
The multiset union for odd $i$ is therefore $[0, T-3i] \sqcup [2(m-i), T-3i-2]$, matching the even case exactly.

\vspace{0.5em}\noindent
Summing over all $0 \le i \le m$, we conclude that the derived multiset of pairs $(A,B)$ from $P_{(1\ 2)}^{a,b}(s)$ is equivalent to the target multiset $\mathcal{M}$.
\end{proof}
\subsubsection{The case where $\sigma_p(E)=(1\ 1^2)$}\label{sec:reformulation for ramlin}
As in the previous section, we recall the isomorphism $E_p\cong \mathbb{Q}_p\times E_p'$ where $E_p'$ is a ramified quadratic field extension over $\mathbb{Q}_p$. 
Then the maximal order is given by $\mathcal{O}_{E_p}=\mathbb{Z}_p\times \mathcal{O}_{E_p'}$ under this identification.
We choose a uniformizer $x$ of $E'_p$ so that $\Mfo_{E'_p}=\mathbb{Z}_p[x]$.
By \cite[Proposition 5.4.(1)]{CHL2}, any order in $E_p$ is of the form \[\mathcal{O}_{a,b,c}=\mathbb{Z}_p\langle (1,1),(0,p^a),(0,p^bx+c)\rangle\] for some integers $a,b\geq 0$ and $c\in \mathbb{Z}_p$ satisfying the condition $a\leq \min(2b+1,2\ord(c))$.
Here, the integers $a$ and $b$ are intrinsic invariants of a given order $\mathcal{O}_{a,b,c}$, uniquely determined regardless of the choice of generators.
If the conductor $f(\mathcal{O}_{a,b,c})$ of an order $\mathcal{O}_{a,b,c}$ in $\mathcal{O}_{E_p}$ is given by $(p^{f_1})\times (x^{f_2})$, then we have $a=f_1$ and $b=S(\mathcal{O}_{a,b,c})-f_1$ according to the proof of \cite[Theorem 5.5]{CHL2}.
Thus, we define the following notation to denotes this invariants:
\begin{equation}\label{eq:invariants of ramlin}
    a(\mathcal{O}_{a,b,c})=a \text{ and }b(\mathcal{O}_{a,b,c})=b.
\end{equation}
Since $R_p$ is Gorenstein, \cite[Corollary 5.6]{CHL2} yields that $a(R_p)\leq 2b(R_p)$ if $a(R_p)$ is even, and $a(R_p)=2b(R_p)+1$ if $a(R_p)$ is odd.

\begin{proposition}\label{prop:first formula for ramlin}
    Suppose $\sigma_p(E)=(1\ 1^2)$. Let $a=a(R_p)$ and $b=b(R_p)$ (see \eqref{eq:invariants of ramlin}). 
    \begin{itemize}
        \item If $a$ is even, then we have 
    \begin{align*}
    \tilde{L}_p(s,R) =&p^{(a+b)s}\Big(p^{(a+b)(1-2s)}+\sum_{1\leq b'\leq b}p^{(a+b-b')(1-2s)}\Big)+p^{(a+b)s}(1-p^{-s})P_{(1\ 1^2)}^{a,b},
    \end{align*}
    where
    \begin{align*}
             P_{(1\ 1^2)}^{a,b}=&\sum_{\substack{0< a' \leq \frac{a}{2}   \\ 2\nmid a'}}p^{\frac{a'-1}{2}+(a+b-\frac{3a'-1}{2})(1-2s)}
    \\&+\sum_{{\substack{0< a' \leq \frac{a}{2} \\  2\mid a'}}}\sum_{\substack{\frac{a'-1}{2}<b'\leq b-\frac{a'}{2}}}(p^{\frac{a'}{2}}-p^{\frac{a'}{2}-1})p^{(a+b-a'-b')(1-2s)}+\sum_{\substack{\frac{a}{2}<a'\leq a\\ 2\mid a' }}p^{\frac{a}{2}-\frac{a'}{2}+(\frac{3a}{2}-\frac{3a'}{2})(1-2s)}\\&+(1+p^{1-s})\Bigg(\sum_{\substack{0< a'\leq \frac{a}{2}\\ 2\mid a'}}\sum_{\substack{\frac{a'-1}{2}\leq b'\leq b-\frac{a'}{2}}}
    p^{\frac{a'}{2}-1+(a+b-a'-b')(1-2s)}+  \sum_{\substack{0< a'\leq \frac{a}{2}}}\sum_{\substack{b-\frac{a'}{2}< b'\leq b}}p^{b-b'+(a+b-a'-b')(1-2s)}
    \\&+\sum_{\substack{0< a'\leq \frac{a}{2}\\ 2\nmid a'}}\sum_{\substack{\frac{a'+1}{2}\leq b'\leq b-\frac{a'}{2}}}
    p^{\lfloor \frac{a'}{2}\rfloor +(a+b-a'-b')(1-2s)}+\sum_{\substack{
    \frac{a}{2}<a'\leq a}}\sum_{ \frac{a'}{2}+b-\frac{a}{2} < b'\leq b}p^{b-b'+(a+b-a'-b')(1-2s)}\Bigg).
   \end{align*}    
        \item If $a$ is odd, then we have
         \begin{align*}
      \tilde{L}_p(s,R)=
    &p^{(3b+1)s}\Big(p^{(3b+1)(1-2s)}+\sum_{1\leq b'\leq b}p^{((3b+1)-b')(1-2s)}\Big)+p^{(3b+1)s}(1-p^{-s})P_{(1\ 1^2)}^{2b+1,b},
    \end{align*}
    where
    \begin{align*}
    P_{(1\ 1^2)}^{2b+1,b}=&\sum_{\substack{0< a' \leq b+1  \\ 2\nmid a'}}p^{\frac{a'-1}{2}+(3b+1-\frac{3a'-1}{2})(1-2s)}\\&+\sum_{{\substack{0< a' \leq b+1\\  2\mid a'}}}\sum_{\substack{\frac{a'}{2}\leq b'\leq b-\frac{a'}{2}}}(p^{\frac{a'}{2}}-p^{\frac{a'}{2}-1})p^{(3b+1-a'-b')(1-2s)} 
    +\sum_{\substack{b+1<a'\leq 2b+1 \\ 2\nmid a'}}p^{b-\frac{a'-1}{2}+(3b+1-\frac{3a'-1}{2})(1-2s)}\\
    &+(1+p^{1-s})\Bigg(\sum_{\substack{0< a'\leq b+1\\ 2\mid a'}}
    \sum_{\frac{a'}{2}\leq b'\leq b-\frac{a'}{2} }p^{\frac{a'}{2}-1+(3b+1-a'-b')(1-2s)}
    +\sum_{\substack{0< a'\leq b+1}}\sum_{b-\lfloor\frac{a'}{2}\rfloor<b'\leq b}p^{b-b'+(3b+1-a'-b')(1-2s)}\\
    &+\sum_{\substack{0< a'\leq b+1\\ 2\nmid a'}} \sum_{\substack{\frac{a'+1}{2}\leq  b'\leq b-\frac{a'-1}{2}}}p^{\frac{a'-1}{2}+(3b+1-a'-b')(1-2s)}+\sum_{b+1 <a'\leq 2b+1}\sum_{\frac{a'-1}{2} < b'\leq b}p^{b-b'+(3b+1-a'-b')(1-2s)}\Bigg).
    \end{align*}
    \end{itemize}
\end{proposition}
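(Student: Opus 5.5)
The proof will follow the template of Proposition \ref{prop:first formula for unramlin}: classify all overorders of $R_p=\mathcal{O}_{a,b,c}$, compute the three ingredients $h_{\mathcal{O}_p}(s)$, $\zeta_{\mathcal{O}_p}(s)/\zeta_{E_p}(s)$ and $[\mathcal{O}_p:R_p]$ for each, substitute into Definition \ref{def:local factor of L function}, and reorganize.

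First, by \cite[Proposition 5.4]{CHL2}, every overorder of $R_p$ has the form $\mathcal{O}_{a',b',c'}$. We have $S(\mathcal{O}_{a,b,c})=a+b$ and $[\mathcal{O}_{a',b',c'}:R_p]=p^{a+b-a'-b'}$. In the odd case $a=2b+1$, so $S(R_p)=3b+1$; this accounts for the prefactor $p^{(3b+1)s}$.

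Next we compute the zeta ratios, using $\zeta_{E_p}(s)=(1-p^{-s})^{-2}$ since $E'_p$ is ramified.
\begin{itemize}
\item If $a'=b'=0$, then $\mathcal{O}_{a',b',c'}=\mathcal{O}_{E_p}$ and the ratio is $1$.
\item If $a'=0<b'$, then $\mathcal{O}=\mathbb{Z}_p\times\mathbb{Z}_p[p^{b'}x]$ has two maximal ideals with residue field $\mathbb{F}_p$, so the ratio is again $1$. This explains why the first bracket carries no factor $(1+p^{-s})$, in contrast to the $(1\ 2)$ case.
\item Otherwise $\mathcal{O}$ is local with residue field $\mathbb{F}_p$, and the ratio is $1-p^{-s}$.
\end{itemize}
The orders with $a'=0$ are Gorenstein, being products of a monogenic quadratic order with $\mathbb{Z}_p$. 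They therefore contribute exactly $p^{(a+b)s}\bigl(p^{(a+b)(1-2s)}+\sum_{1\le b'\le b}p^{(a+b-b')(1-2s)}\bigr)$.

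For $a'>0$ we import the enumeration from the proof of \cite[Theorem 5.5]{CHL2}. That proof counts, for fixed $(a',b')$, the number of $c'$ modulo the appropriate ideal such that $R_p\subset\mathcal{O}_{a',b',c'}$, together with how many of these are Gorenstein. Both counts are expressed through $\min(\lfloor a'/2\rfloor,b-b')$, $b-b'$ and $\operatorname{ord}(c)$, with the admissibility constraint $a'\le\min(2b'+1,2\operatorname{ord}(c'))$. We then apply the identity
\[
h_{\mathcal{O}}=(1+p^{1-s})-p^{1-s}\mathbf{1}_{\mathrm{Gor}},
\]
exactly as in Proposition \ref{prop:first formula for unramlin}. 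The Gorenstein overorders are identified via \cite[Corollary 5.6]{CHL2}: $a'$ even with $a'\le 2b'$ at the extreme value of $b'$, or $a'=2b'+1$. This accounts for the half-integer exponents $(3a'-1)/2$ for odd $a'$.

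The main obstacle is eliminating $\operatorname{ord}(c)$. As in the $(1\ 2)$ case, we argue as in \cite[Theorem 5.5]{CHL2} that the sum is unchanged when $\operatorname{ord}(c)$ is replaced by $S(R_p)-f_2$. The Gorenstein condition of \cite[Corollary 5.6]{CHL2} then pins this quantity down: it becomes $a/2$ when $a$ is even, and $b+1$ (equivalently, the threshold $\lceil a/2\rceil$) when $a=2b+1$. This explains why the ranges split at $a'=a/2$ and at $a'=b+1$, respectively.

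After this substitution we interchange the double sums and rewrite $-p^{1-s}=1-(1+p^{1-s})$. The Gorenstein terms then produce the three standalone sums, namely the odd-$a'$ diagonal, the $(p^{a'/2}-p^{a'/2-1})$ sum, and the large-$a'$ tail, while everything else is collected under $(1+p^{1-s})$. The step requiring the most care is matching the boundary ranges of $b'$ for odd $a'$, such as $\frac{a'+1}{2}\le b'$ versus $\frac{a'-1}{2}<b'$. Here I would explicitly check the admissibility inequality $a'\le 2b'+1$ at the endpoints, separately for each parity of $a$.
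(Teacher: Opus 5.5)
Your proposal is correct and follows the paper's proof essentially step for step: the overorder parametrization and index formula from \cite[Proposition 5.4]{CHL2}, the zeta ratios ($1$ for $a'=0$, $1-p^{-s}$ otherwise), the counts imported from the proof of \cite[Theorem 5.5]{CHL2}, the splitting $h_{\mathcal{O}_p}(s)=(1+p^{1-s})-p^{1-s}\mathbf{1}_{\mathrm{Gor}}$, the elimination of $\ord(c)$ via the Gorenstein condition, and the interchange of sums.

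Two small corrections. First, in this case the conductor is $(p^{f_1})\times(x^{f_2})$, where $f_2$ is an $x$-adic exponent, so the quantity that replaces $\ord(c)$ is $\lceil (2S(R_p)-f_2)/2\rceil$, not $S(R_p)-f_2$ (the latter would equal $-b$ for Gorenstein $R_p$); \cite[Corollary 5.6]{CHL2} turns it into $\lceil a/2\rceil$, which gives the values $a/2$ and $b+1$ you use. Second, the even-$a'$ Gorenstein overorders are cut out by the condition $\ord(c')=a'/2$, not by an extreme value of $b'$; this is where the count $p^{a'/2}-p^{a'/2-1}$ comes from.
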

\begin{proof}
    By \cite[Proposition 5.4.(2)]{CHL2}, any overorder of $R_p=\Mfo_{a,b,c}$ is of the form $\Mfo_{a',b',c'}$.
    Moreover, \cite[Proposition 5.4.(4)]{CHL2} yields that $S(\mathcal{O}_{a,b,c})=a+b$ and $[\mathcal{O}_{a',b',c'}:R_p]=p^{a+b-a'-b'}$. 
    If $a'=0$ (so that $\mathcal{O}_{a',b',c'}= \mathbb{Z}_p\times \mathbb{Z}_p[p^{b'}x]$), then maximal ideals of $\mathcal{O}_{a',b',c'}$ are $(p)\times \mathbb{Z}_p[p^{b'}x]$ and $\Z_p\times (p,p^{b'}x)$, and thus $\zeta_{\mathcal{O}_{a',b',c'}}(s)=\frac{1}{(1-p^{-s})^2}$.
    Otherwise, $\mathcal{O}_{a',b',c'}$ is a local ring with the residue field $\mathbb{F}_p$. Therefore we have
    \[\frac{\zeta_{\mathcal{O}_{a',b',c'}}(s)}{\zeta_{E_p}(s)}=\left\{
    \begin{array}{l l}
        1 & \text{if $a'=0$}; \\
        1-p^{-s}& \text{otherwise}.
    \end{array}\right.
    \]
    Then the enumeration of overorders and Gorenstein overorders $\mathcal{O}_{a',b',c'}$ of $\mathcal{O}_{a,b,c}$ for fixed $a'$ and $b'$ in the proof of \cite[Theorem 5.5]{CHL2} yields that 
    \begin{equation}
    \begin{aligned}
        \tilde{L}_p(s,R)=
      &p^{(a+b)s}\Big(p^{(a+b)(1-2s)}+\sum_{0<b'\leq b}p^{(a+b-b')(1-2s)}\Big) \\
      &+p^{(a+b)s}(1-p^{-s})\Bigg((1+p^{1-s})\Big(\sum_{0\leq b'\leq b }
    \sum_{\substack{0< a'\leq \min(a,\ord(c))\\ a'\leq 2b'+1 }}
    p^{\min (\lfloor\frac{a'}{2} \rfloor,b-b')+(a+b-a'-b')(1-2s)}\\
    &+\sum_{\substack{\ord(c)<a'\leq a\\a'\leq 2b'+1\\ \lceil \frac{a'}{2}\rceil  \leq \ord(c)-(b-b')}}p^{b-b'+(a+b-a'-b')(1-2s)}\Big)-p^{1-s}\Big(\sum_{\substack{0< a' \leq a,\\a'\leq \ord(c) \\ 2|(a'-1)}}p^{\min(\frac{a'-1}{2},b-\frac{a'-1}{2})+(a+b-\frac{3a'-1}{2})(1-2s)}
    \\&+\sum_{\substack{\ord(c)<a'\leq a\\ 2|(a'-1)\\b+1\leq \ord(c)}}p^{b-\frac{a'-1}{2}+(a+b-\frac{3a'-1}{2})(1-2s)}
    +\sum_{\substack{0<b'\leq b}}\sum_{{\substack{0< a' \leq \min(a,\ord(c)),\\  a'< 2b'+1,\ 2\mid a'\\ \frac{a'}{2}\leq b-b'}}}(p^{\frac{a'}{2}}-p^{\frac{a'}{2}-1})p^{(a+b-a'-b')(1-2s)}\\
    &+\sum_{\substack{\ord(c)<a'\leq a\\ 2|a'\\ \ord(c)\leq b}}p^{\ord(c)-\frac{a'}{2}+(a+\ord(c)-\frac{3a'}{2})(1-2s)}\Big)\Bigg)
    \end{aligned}
    \end{equation}
    Here, we apply the following reformulation to a component of $\tilde{L}_p(s,R)$:
    \begin{align*}
     &p^{S(R_p)s}(1-p^{-s})
    \sum_{\substack{R_p\subset\mathcal{O}_p\subset \mathcal{O}_{E_p}\\ \zeta_{\mathcal{O}_p}(s)/\zeta_{E_p}(s)=1-p^{-s}}}h_{\mathcal{O}_p}(s)[\mathcal{O}_p:R_p]^{1-2s} 
    \\&=p^{S(R_p)s}(1-p^{-s})\Big(
    \sum_{\substack{R_p\subset\mathcal{O}_p\subset \mathcal{O}_{E_p}\\ \zeta_{\mathcal{O}_p}(s)/\zeta_{E_p}(s)=1-p^{-s}}}(1+p^{1-s})[\mathcal{O}_p:R_p]^{1-2s} - \sum_{\substack{R_p\subset\mathcal{O}_p\subset \mathcal{O}_{E_p}\\ \zeta_{\mathcal{O}_p}(s)/\zeta_{E_p}(s)=1-p^{-s}\\ \mathcal{O}_p :\ Gorenstein}}p^{1-s}[\mathcal{O}_p:R_p]^{1-2s}
    \Big).
    \end{align*}
    By the same argument in the proof of \cite[Theorem 5.5]{CHL2}, replacing $\ord(c)$ with $\lceil \frac{2S(R_p)-f_2}{2}\rceil$ leaves the sum invariant where $(p^{f_1})\times (x^{f_2})$ is the conductor of an order $R_p$ in $\mathcal{O}_{E_p}$.
    Since $R_p$ is Gorenstein, \cite[Corollary 5.6]{CHL2} yields that \[ \lceil\frac{2S(R_p)-f_2}{2}\rceil=\lceil \frac{f_1}{2}\rceil=\lceil \frac{a}{2}\rceil.\] 
    Therefore, we replace $\ord(c)$ with $\frac{a}{2}$ if $a$ is even, and with $b+1$ if $a=2b+1$. 
    Interchanging the order of the double sums and applying the identity $-p^{1-s}=1-(1+p^{1-s})$ yields the desired formulas.
\end{proof}
\begin{theorem}\label{thm:reformulation_for_ramlin}
    Suppose that $\sigma_p(E)=(1\ 1^2)$. Let $a=a(R_p)$ and $b=b(R_p)$ (see \eqref{eq:invariants of ramlin}).
    \begin{itemize}
        \item 
    If $a$ is even (so that $a\leq 2b$), then we have  
     \begin{align*}
        \tilde{L}_{p}(s,R)=&p^{(a+b)(1-s)}+p^{(a+b-1)(1-s)+s}\sum_{i=0}^{\frac{a}{2}-1}p^{(3s-1)i}+p^{(b+\frac{a}{2}-1)(1-s)+as}\sum_{i=0}^{b-\frac{a}{2}-1}p^{(2s-1)i}\\
        &+p^{(a+b)s}(1-p^{-s})\Bigg(\sum_{d'=0}^{b-\frac{a}{2}-1}p^{(1-2s)d'}\sum_{i=0}^{\frac{a}{2}}p^{(2-3s)i}\\
        &+p^{(b-\frac{a}{2})(1-2s)}\sum_{d'=1}^{\frac{a}{2}}p^{(3-6s)(\frac{a}{2}-d')}\Big(\sum_{i=0}^{d'}p^{(2-3s)i}+p^{1-2s}\sum_{i=0}^{d'-1}p^{(2-3s)i}+p^{2-4s}\sum_{i=0}^{d'-1}p^{(2-3s)i}\Big)\Bigg).
    \end{align*}
    \item
    If $a$ is odd (so that $a=2b+1$), then we have
      \begin{align*}
            \tilde{L}_{p}(s,R)=&p^{(3b+1)(1-s)}+p^{3b(1-s)+s}\sum_{i=0}^{b-1}p^{(3s-1)i}\\
            &+p^{(3b+1)s}(1-p^{-s})\Bigg(p^{(3-6s)b}\\
            &+\sum_{d'=1}^{b}p^{(3-6s)(b-d')}\Big(\sum_{i=0}^{d'}p^{(2-3s)i}+p^{1-2s}\sum_{i=0}^{d'}p^{(2-3s)i}+p^{2-4s}\sum_{i=0}^{d'-1}p^{(2-3s)i}\Big)\Bigg).
        \end{align*}
    \end{itemize}
\end{theorem}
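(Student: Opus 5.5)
We follow the template of the proof of Theorem \ref{thm:reformulation_for_unramlin}, starting from the two expressions in Proposition \ref{prop:first formula for ramlin}.

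\textbf{Separating the $a'=0$ part.} First we dispose of the part coming from overorders with $a'=0$. These are the terms $p^{(a+b)s}\big(p^{(a+b)(1-2s)}+\sum_{1\le b'\le b}p^{(a+b-b')(1-2s)}\big)$ (with $a+b=3b+1$ in the odd case). As in the $a=0$ computation of Theorem \ref{thm:reformulation_for_unramlin}, we split this geometric sum into three pieces:
\begin{itemize}
\item the leading monomial $p^{(a+b)(1-s)}$;
\item the two ``boundary'' geometric series $p^{(a+b-1)(1-s)+s}\sum_{i}p^{(3s-1)i}$ and $p^{(b+\frac a2-1)(1-s)+as}\sum_i p^{(2s-1)i}$;
\item a remainder that is a multiple of $(1-p^{-s})$.
\end{itemize}
The point is that the factor in the $(1\ 1^2)$ case is $1$ rather than $(1+p^{-s})$, so the telescoping identity now produces $(1-p^{-s})$ instead of $(1-p^{-2s})$. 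That remainder $(1-p^{-s})$-part is absorbed into the big bracket.

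\textbf{Reduction to an identity of polynomials.} After this, everything multiplied by $p^{(a+b)s}(1-p^{-s})$ must be compared. Concretely, we set $Q^{a,b}_{(1\ 1^2)}(s)$ to be the bracket in the theorem plus the (ii)-type correction coming from the splitting above. We then reduce the theorem to the identity
\[P^{a,b}_{(1\ 1^2)}=Q^{a,b}_{(1\ 1^2)}(s).\]

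\textbf{Proving the identity by multisets (even $a$).} We prove this identity by writing every monomial as $p^{A(1-2s)+B(2-3s)}$ and comparing multisets of pairs $(A,B)$. This is legitimate because such a representation is unique.

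For even $a=2m$, $b=m+k$, $T=3m+k$, we expect the target multiset to be an analogue of $\mathcal M$ in \eqref{eq:Mset_splitunram}. The rows $0\le i\le m$ carry intervals $[0,T-3i]$, adjusted at the ends, and a doubled central segment comes from the correction family. The inner bracket of family (iii) has upper index $d'-1$ instead of $d'-2$ for the $p^{2-4s}$ term. This matches the totally ramified pattern, so its contribution is read off directly from the proof of Theorem \ref{thm:reformulation_for_ram} (first case), giving full rows $\{(x,i):\max(1,3i)\le x\le 3d\}$ after shifting by $k$.

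On the $P$ side we proceed as in the previous proof:
\begin{itemize}
\item Cancel the negative part $-p^{a'/2-1}(\cdots)$ of the second sum against the matching piece of the $(1+p^{1-s})$-bracket. The $b'$-ranges now start at $\frac{a'-1}{2}$, so we must check that the extra endpoint $b'=a'/2$ versus $(a'-1)/2$ is consistent.
\item Merge the even and odd $a'$ tail sums via floors.
\item Sort the surviving terms by parity of $B$: terms without the $p^{1-s}$ factor give even $B=2X$, and the $p^{1-s}$-shifted terms give odd $B$.
\item Fill each row $i$ with intervals exactly as in Cases 1--3 of that proof.
\end{itemize}

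\textbf{Odd $a$.} For odd $a=2b+1$ the same procedure applies with $T=3b+1$ and $\ord(c)$ replaced by $b+1$. The target is the multiset of Theorem \ref{thm:reformulation_for_ram} (second case), together with the extra leading term $p^{(3-6s)b}$ and the boundary series.

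\textbf{Expected obstacle.} The main obstacle is the bookkeeping of row endpoints. The odd-$a'$ first sum, with exponent $(3a'-1)/2$, shifts the Gorenstein pairs by one compared with the $(1\ 2)$ case. This is exactly what converts the $d'-2$ to $d'-1$ and creates the extra $(1,0)$-type pairs. We would verify the row-by-row interval unions carefully for small $m$ (e.g.\ $m=1,2$) first to pin down the endpoint conventions, and only then write the general case analysis.
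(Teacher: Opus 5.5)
Your proposal is correct and is essentially the paper's proof. You split off the $a'=0$ terms so that the theorem reduces to $P^{a,b}_{(1\ 1^2)}=Q^{a,b}_{(1\ 1^2)}$, with $Q$ consisting of families (i), (ii) and (iii). You then match multisets of exponent pairs $(A,B)$, reading (iii) off Theorem~\ref{thm:reformulation_for_ram} and handling the $P$ side by the $\mathcal{C}_1$-cancellation and parity-of-$B$ sorting from Theorem~\ref{thm:reformulation_for_unramlin}; the paper does $a=0$ directly. The endpoints you left open are fixed as follows (row index $B=i$, intervals in $A$). For even $a$, row $0$ is $[0,T-1]$, row $i$ is $[0,T-3i]$ for $1\le i\le m$, plus a second copy of $[2(m-i),T-3i-1]$ for $1\le i\le m-1$. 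For odd $a$, the bracket including $p^{(3-6s)b}$ is already the right-hand side of \eqref{eq:comparison_totram_3d1} with $d=b$, so do not add an extra $(3b,0)$; the only addition to that multiset is the (ii)-type correction $\bigsqcup_{1\le i\le b-1}\{(A,i):2b-2i+1\le A\le 3b-3i\}$.
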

\begin{proof}
    In the case that $a=0$ (thus included in the case when $a$ is even), we have 
    \begin{align*}
    \tilde{L}_p(s,R)=&p^{bs}\Big(p^{b(1-2s)}+\sum_{0<b'\leq b}p^{(b-b')(1-2s)}\Big)\\
    =& p^{b(1-s)}+p^{bs}\sum_{b'=0}^{b-1}p^{(1-2s)b'}= p^{b(1-s)}+ p^{(b-1)(1-s)}\sum_{i=0}^{b-1}p^{(2s-1)i}+(1-p^{-s})p^{bs}\sum_{d'=0}^{b-1}p^{(1-2s)d'}.
\end{align*}
This is the assertion of the theorem.
Therefore, we may and do assume that $a\geq 1$.

    \vspace{0.5em}
    \noindent \textbf{The case when $a$ is even:} \\
    We abbreviate $a=2m$ and $b=m+k$ where $a\geq 2$ and $a\leq 2b$ amount to $m\geq 1$ and $k\geq 0$, and $T=a+b=3m+k$. By Proposition \ref{prop:first formula for ramlin}, we have
    \begin{equation}\label{eq:copy of prop ramlin1}
    \tilde{L}_p(s,R) =p^{Ts}\Big(p^{T(1-2s)}+\sum_{1\leq b'\leq m+k}p^{(T-b')(1-2s)}\Big)+p^{Ts}(1-p^{-s}) P_{(1\ 1^2)}^{a,b}(s)
   \end{equation}      
    where $P_{(1\ 1^2)}^{a,b}(s)=\mathcal{A}(s)+(1+p^{1-s})\mathcal{C}(s)$ with
    \begin{align*}
         \mathcal{A}(s)=&  \sum_{\substack{0< a' \leq m   \\ 2\nmid a'}}p^{\frac{a'-1}{2}+(T-\frac{3a'-1}{2})(1-2s)}
    +\sum_{{\substack{0< a' \leq m \\  2\mid a'}}}\sum_{\substack{\frac{a'-1}{2}<b'\leq m+k-\frac{a'}{2}}}(p^{\frac{a'}{2}}-p^{\frac{a'}{2}-1})p^{(T-a'-b')(1-2s)}\\&+\sum_{\substack{m<a'\leq 2m\\ 2\mid a' }}p^{m-\frac{a'}{2}+(3m-\frac{3a'}{2})(1-2s)},\\
    \mathcal{C}(s)=&
    \sum_{\substack{0< a'\leq m\\ 2\mid a'}}\sum_{\substack{\frac{a'-1}{2}\leq b'\leq m+k-\frac{a'}{2}}}
    p^{\frac{a'}{2}-1+(T-a'-b')(1-2s)}+  \sum_{\substack{0< a'\leq m}}\sum_{\substack{m+k-\frac{a'}{2}< b'\leq m+k}}p^{m+k-b'+(T-a'-b')(1-2s)}
    \\&+\sum_{\substack{0< a'\leq m\\ 2\nmid a'}}\sum_{\substack{\frac{a'+1}{2}\leq b'\leq m+k-\frac{a'}{2}}}
    p^{\lfloor \frac{a'}{2}\rfloor +(T-a'-b')(1-2s)}+\sum_{\substack{
    m<a'\leq 2m}}\sum_{ \frac{a'}{2}+k < b'\leq m+k}p^{m+k-b'+(T-a'-b')(1-2s)}.
    \end{align*}
    We claim that this expression equals the following one, which is the assertion of the theorem after factoring out $p^{Ts}$:
    \begin{equation}\label{eq:reformulation of stat in thm of ramlin}
    \begin{aligned}
          &p^{Ts}\Big(p^{T(1-2s)}+\sum_{1\leq i\leq m+k}p^{(T-i)(1-2s)}\Big)\\
        &+p^{Ts}(1-p^{-s})\Bigg(\sum_{d'=0}^{k-1}p^{(1-2s)d'}\sum_{i=0}^m p^{(2-3s)i}
        +\sum_{d'=0}^{m-1}p^{(T-1-d')(1-2s)+d's}\sum_{i=0}^{d'-1}p^{-is}\\
        &+\sum_{d'=m}^{m+k-1}p^{(T-1-d')(1-2s)+(m-1)s}\sum_{i=0}^{m-2}p^{-is}\\
        &+p^{k(1-2s)}\sum_{d'=1}^m p^{(3-6s)(m-d')}\Big(\sum_{i=0}^{d'}p^{(2-3s)i}+p^{1-2s}\sum_{i=0}^{d'-1}p^{(2-3s)i}+p^{2-4s}\sum_{i=0}^{d'-1}p^{(2-3s)i}\Big)\Bigg).
    \end{aligned}
    \end{equation}
    Since the first line is identical with the first bracketed term in \eqref{eq:copy of prop ramlin1}, it suffices to prove that the bracket multiplied by $p^{Ts}(1-p^{-s})$ coincide.
    We decompose $\mathcal{A}(s) = \mathcal{A}_1(s) + \widetilde{\mathcal{A}}_2(s) + \mathcal{A}_3(s)$ and $\mathcal{C}(s) = \mathcal{C}_1(s) + \mathcal{C}_2(s) + \mathcal{C}_3(s) + \mathcal{C}_4(s)$ strictly matching their sequential terms as in the proof of Theorem \ref{thm:reformulation_for_unramlin}. 
    Then the negative part  $-p^{\frac{a'}{2}-1}p^{(T-a'-b')(1-2s)}$ (summed over even $a'$ and $\frac{a'-1}{2}<b'\leq m+k-\frac{a'}{2}$) in $\widetilde{\mathcal{A}}_2(s)$ is identical to $\mathcal{C}_1(s)$ and thus $\mathcal{A}_2(s)=\widetilde{\mathcal{A}}_2(s)-\mathcal{C}_1(s)$ denotes the positive part of $\widetilde{\mathcal{A}}_2(s)$.  
    On the other hand, we denote by $Q_{(1\ 1^2)}^{a,b}(s)$ the bracket multiplied by $p^{Ts}(1-p^{-s})$ of \eqref{eq:reformulation of stat in thm of ramlin},
    \begin{equation*}
\begin{aligned}
Q_{(1\ 1^2)}^{a,b}(s) = &\sum_{d'=0}^{k-1} p^{d'(1-2s)} \sum_{i=0}^{m} p^{i(2-3s)} &\text{(i)} \\
    &+ \sum_{d'=0}^{m-1} p^{(T-1-d')(1-2s) + d's} \sum_{i=0}^{d'-1} p^{-is} + \sum_{d'=m}^{m+k-1} p^{(T-1-d')(1-2s) + (m-1)s} \sum_{i=0}^{m-2} p^{-is} &\text{(ii)} \\
    &+ p^{k(1-2s)} \sum_{d'=1}^{m} p^{(3-6s)(m-d')} \Big( \sum_{i=0}^{d'} p^{i(2-3s)} + p^{1-2s}\sum_{i=0}^{d'-1} p^{i(2-3s)} + p^{2-4s}\sum_{i=0}^{d'-1} p^{i(2-3s)} \Big). &\text{(iii)}
\end{aligned}
\end{equation*}

    As in the proof of Theorem \ref{thm:reformulation_for_unramlin}, we rewrite every monomial occurring on either side in the normal form $p^{A(1-2s)+B(2-3s)}$, and record it by the pair $(A,B)$. The equality is equivalent to the equality of the two multisets of pairs obtained from $P_{(1\ 1^2)}^{a,b}(s)$ and $Q_{(1\ 1^2)}^{a,b}(s)$ since every monomial $p^{X+Ys}$ has a unique representation of the form $p^{A(1-2s)+B(2-3s)}$. 
    We compute both to show they equal the identical multiset:
    \begin{equation}\label{eq:Mset_ramlin_even}
    \begin{split}
    \mathcal{M}_{\text{even}}=
    \Big(\{(A,0): 0\leq A\leq T-1\}\sqcup \bigsqcup_{1\leq i\leq m}&\{(A,i): 0\leq A\leq T-3i\}\Big) \\ &\sqcup \Big(\bigsqcup_{1\leq i\leq m-1}\{(A,i): 2(m-i)\leq A\leq T-3i-1\}\Big).
    \end{split}
    \end{equation}
     
    \emph{$Q_{(1\ 1^2)}^{a,b}(s)$ equals $\mathcal{M}_{\text{even}}$.} 
    The first family (i) clearly corresponds to \[\bigsqcup_{0 \le i \le m} \{ (A,i) : 0 \le A \le k-1 \}.\]
    The family (ii) coincides with that in the proof of Theorem \ref{thm:reformulation_for_unramlin}. Therefore it corresponds to 
    \[\bigsqcup_{1 \le i \le m-1} \{ (A,i) : 2(m-i) \le A \le T-3i-1 \}.
    \]
    According to the proof of Theorem \ref{thm:reformulation_for_ram}, the family (iii) corresponds to
    \[
    \{(A,0): k\leq A\leq T-1\}\sqcup\Big(\bigsqcup_{1\leq i \leq m}\{(A,i): k\leq A\leq  T-3i\}
    \Big).
    \]
    These computations yield that $Q_{(1\ 1^2)}^{a,b}(s)$ corresponds to $\mathcal{M}_{\text{even}}$.

    \emph{$P_{(1\ 1^2)}^{a,b}(s)$ equals $\mathcal{M}_{\text{even}}$.} 
    By the above argument, we have
    \[
    P_{(1\ 1^2)}^{a,b}(s) = \widehat{\mathcal{A}}(s) + \widehat{\mathcal{C}}(s) + p^{1-s}\mathcal{C}(s),
    \]
    where $\widehat{\mathcal{A}}(s)=\mathcal{A}_1(s)+\mathcal{A}_2(s)+\mathcal{A}_3(s)$ and $\widehat{\mathcal{C}}(s) = \mathcal{C}_2(s) + \mathcal{C}_3(s) + \mathcal{C}_4(s)$. As in the proof of Theorem \ref{thm:reformulation_for_unramlin}, base terms $\widehat{\mathcal{A}}(s) + \widehat{\mathcal{C}}(s)$  generate pairs $(A,i)$ solely for even $i$. 
    Conversely, the shifted terms $p^{1-s}\mathcal{C}(s) = p^{(2-3s)-(1-2s)}\mathcal{C}(s)$ shift the indices to odd $i$.

    \vspace{0.5em}
    \noindent\textbf{Case 1: $i=0$ or $m$.} \\
    In the case that $i=0$, pairs $(A,0)$ arise uniquely from $\widehat{\mathcal{A}}(s)+\widehat{\mathcal{C}}(s)$. 
    From $\mathcal{A}_3(s)$ for $a'=2m$, we have $(0,0)$. 
    From $\mathcal{C}_4(s)$ and $\mathcal{C}_2(s)$ for $b'=m+k$, we have $(A,0)$ where $A$ ranges over $[1, 2m-1]$. 
    From $\mathcal{C}_3(s)$ for $a'=1$, $A$ ranges over $[2m, T-2]$.
    From $\mathcal{A}_1(s)$ for $a'=1$, we have $(0,T-1)$.
    Thus, $A$ spans the interval  $[0, T-1]$.
    In the case that $i=m$, pairs $(A,m)$ arise from $\mathcal{A}_2(s)$ where $A$ ranges over $[0,T-3m]$  if $m$ is even, and 
    arise from $\mathcal{C}_2(s)+\mathcal{C}_3(s)$ where $A$ ranges over $[0,T-3m]$
    if $m$ is odd.
    

    \vspace{0.5em}
    \noindent\textbf{Case 2: Even $i$ ($1 \le i \le m-1$).} \\
    Pairs $(A,i)$ with even $i$ arise  solely from $\widehat{\mathcal{A}}(s)+\widehat{\mathcal{C}}(s)$. 
    \begin{itemize}
        \item From $\mathcal{A}_3(s)$, we have $i=2( m-\frac{a'}{2})=2m-a'$. The $A$-coordinate is $0$.
        \item From $\mathcal{C}_4(s)$ and $\mathcal{C}_2(s)$, we have $i=2(m+k-b')$. The limits of $b'$ imply that $A$ ranges over $b'$ limits naturally bounds $A \in [1, 2m-2i-1]$.
        \item From $\mathcal{C}_3(s)$ and $\mathcal{A}_1(s)$, we have $i=a'-1$.
        The evaluations of $A$ bounds exactly $A \in [2m-2i, T-3i-1]$. 
        In particular, $(T-3i-1,i)$ is the unique pair arising from $\mathcal{A}_1(s)$.
        \item From $\mathcal{A}_2(s)$, we have $i=2\cdot \frac{a'}{2}=a'$. The $A$-coordinate ranges over $A \in [2m-2i, T-3i]$.
    \end{itemize}
    Taking the multiset union of these intervals gives $[0, T-3i] \sqcup [2(m-i), T-3i-1]$.

    \vspace{0.5em}
    \noindent\textbf{Case 3: Odd $i$ ($1 \le i \le m-1$).} \\
    Pairs $(A,i)$ with odd $i$ arise solely from $p^{1-s}\mathcal{C}(s)$.
    \begin{itemize}
        \item By $\mathbf{Case\ 1}$ and $\mathbf{Case\ 2}$, for an even $i'=i-1$ with $0\leq i' \leq m-1$, the term $\mathcal{C}_2(s)+\mathcal{C}_3(s)+\mathcal{C}_4(s)$ generates pairs $(A',i')$ where $A' \in [1, T-3i'-2]$. 
    Applying the shift $p^{1-s}=p^{(2-3s)-(1-2s)}$, 
    the term $p^{1-s}(\mathcal{C}_2(s)+\mathcal{C}_3(s)+\mathcal{C}_4(s))$ shifts these to pairs $(A,i)=(A'-1,i'+1)$, where $A$ ranges over $[0, T-3i]$.
        \item From $p^{1-s}\mathcal{C}_1(s)$, we have $i=(a'-2)+1$.
        The $A$-coordinate ranges over  $[2m-2i, T-3i-2]$.
    \end{itemize}
    The multiset union for odd $i$ is therefore $[0,T-3i]\sqcup [2(m-i),T-3i-1]$, matching the even case exactly.
    
Summing over all $0\leq i \leq m$, we conclude that the derived multiset of pairs $(A,B)$ from $P_{(1\ 1^2)}^{a,b}(s)$ is equivalent to the target multiset $\mathcal{M}_{even}$. This completes the proof for the case when $a$ is even.

    \vspace{1em}
    \noindent\textbf{The case when $a$ is odd ($a=2b+1$):} \\
    The proof is almost identical with the case when $a$ is even. 
    By Proposition \ref{prop:first formula for ramlin}, we have
    \begin{align*}
      \tilde{L}_p(s,R)=
    &p^{(3b+1)s}\Big(p^{(3b+1)(1-2s)}+\sum_{1\leq b'\leq b}p^{((3b+1)-b')(1-2s)}\Big)+p^{(3b+1)s}(1-p^{-s})P_{(1\ 1^2)}^{2b+1,b}
    \end{align*}
    where $P_{(1\ 1^2)}^{2b+1,b}=\mathcal{A}(s)+(1+p^{1-s})\mathcal{C}(s)$ with
    \begin{align*}
    \mathcal{A}(s)=&\sum_{\substack{0< a' \leq b+1  \\ 2\nmid a'}}p^{\frac{a'-1}{2}+(3b+1-\frac{3a'-1}{2})(1-2s)}+\sum_{{\substack{0< a' \leq b+1\\  2\mid a'}}}\sum_{\substack{\frac{a'}{2}\leq b'\leq b-\frac{a'}{2}}}(p^{\frac{a'}{2}}-p^{\frac{a'}{2}-1})p^{(3b+1-a'-b')(1-2s)} 
    \\&+\sum_{\substack{b+1<a'\leq 2b+1 \\ 2\nmid a'}}p^{b-\frac{a'-1}{2}+(3b+1-\frac{3a'-1}{2})(1-2s)},\\
    \mathcal{C}(s)=&\sum_{\substack{0< a'\leq b+1\\ 2\mid a'}}
    \sum_{\frac{a'}{2}\leq b'\leq b-\frac{a'}{2} }p^{\frac{a'}{2}-1+(3b+1-a'-b')(1-2s)}
    +\sum_{\substack{0< a'\leq b+1}}\sum_{b-\lfloor\frac{a'}{2}\rfloor<b'\leq b}p^{b-b'+(3b+1-a'-b')(1-2s)}\\
    &+\sum_{\substack{0< a'\leq b+1\\ 2\nmid a'}} \sum_{\substack{\frac{a'+1}{2}\leq  b'\leq b-\frac{a'-1}{2}}}p^{\frac{a'-1}{2}+(3b+1-a'-b')(1-2s)}+\sum_{b+1 <a'\leq 2b+1}\sum_{\frac{a'-1}{2} < b'\leq b}p^{b-b'+(3b+1-a'-b')(1-2s)}.
    \end{align*} 
    We claim that this coincides with the following reformulation of the formula stated in the theorem:
    \begin{align*}
            &p^{(3b+1)s}\Bigg(p^{(3b+1)(1-2s)}+\sum_{1\leq i\leq b}p^{((3b+1)-i)(1-2s)}\Bigg)\\
            &+p^{(3b+1)s}(1-p^{-s})\Bigg(p^{(3-6s)b}+\sum_{d'=0}^{b-1}p^{(3b-d')(1-2s)+d's}\sum_{i=0}^{d'-1}p^{-is}\\
            &+\sum_{d'=1}^{b}p^{(3-6s)(b-d')}\Big(\sum_{i=0}^{d'}p^{(2-3s)i}+p^{1-2s}\sum_{i=0}^{d'}p^{(2-3s)i}+p^{2-4s}\sum_{i=0}^{d'-1}p^{(2-3s)i}\Big)\Bigg).
    \end{align*}
    Here, we denote by $Q_{(1\ 1^2)}^{a,b}(s)$ the bracketed sum multiplied by $p^{(3b+1)s}(1-p^{-s})$.
    
    This is parallel to the situation of the case when $a$ is even and therefore it suffices to prove that  $P_{(1\ 1^2)}^{a,b}(s)=Q_{(1\ 1^2)}^{a,b}(s)$. 
    By identifying the form $p^{A(1-2s)+B(2-3s)}$ with the pair $(A,B)$, this is equivalent to verifying that  the two multisets of pairs obtained from $P_{(1\ 1^2)}^{a,b}(s)$ and $Q_{(1\ 1^2)}^{a,b}(s)$ coincide.
    We compute both to show that they equal the identical multiset:
    \begin{equation}
    \begin{split}
        \mathcal{M}_{\text{odd}}= \Big(\{(A,0) : 0 \le A \le 3b\} \sqcup \bigsqcup_{1 \le i \le b} &\{(A,i) : 0 \le A \le 3b-3i+1\}\Big) \\
        &\sqcup \Big(\bigsqcup_{1 \le i \le b-1} \{(A,i) : 2b-2i+1 \le A \le 3b -3i\}\Big).
    \end{split}
    \end{equation}

    \emph{$Q_{(1\ 1^2)}^{a,b}(s)$ equals $\mathcal{M}_{\text{odd}}$.} 
    The summation $\sum_{d'=0}^{b-1}p^{(3b-d')(1-2s)+d's}\sum_{i=0}^{d'-1}p^{-is}$ corresponds to the second union $\bigsqcup_{1\leq i\leq b-1}\{(A,i): A\in [2b-2i+1,3b-3i]\}$ of $\mathcal{M}_{\text{odd}}$.
    Using the argument in the proof of Theorem \ref{thm:reformulation_for_ram}, the remaining part of $Q_{(1\ 1^2)}^{a,b}(s)$ corresponds to the first union $\{(A,0) : 0 \le A \le 3b\} \sqcup \bigsqcup_{1 \le i \le b} \{(A,i) : 0 \le A \le 3b-3i+1\}$ of $\mathcal{M}_{\text{odd}}$.

    \emph{$P_{(1\ 1^2)}^{a,b}(s)$ equals $\mathcal{M}_{\text{odd}}$.} 
    Similar to the case when $a$ is even, we decompose $\mathcal{A}(s)=\mathcal{A}_1(s)+\widetilde{\mathcal{A}}_2(s)+\mathcal{A}_3(s)$ and $\mathcal{C}(s)=\mathcal{C}_1(s)+\mathcal{C}_2(s)+\mathcal{C}_3(s)+\mathcal{C}_4(s)$, where the negative part of $\widetilde{\mathcal{A}}_2(s)$ is identical to $\mathcal{C}_1(s)$ and set $\mathcal{A}_2(s)= \widetilde{\mathcal{A}}_2(s)-\mathcal{C}_1(s)$. 
    \begin{itemize}
        \item     In the case of $i=0$, pairs $(A,0)$ arise solely from $\mathcal{A}_3(s)+\mathcal{C}_4(s)+\mathcal{C}_2(s)+\mathcal{C}_3(s)+\mathcal{A}_1(s)$. The $A$-coordinate runs over $[0,3b]$.
        \item In the case that $i$ is even with $1\leq i \leq b-1$, $\mathcal{A}_3(s)+\mathcal{C}_4(s)+\mathcal{C}_2(s)+\mathcal{C}_3(s)+\mathcal{A}_1(s)$ contributes to $(A,i)$ where $A$ ranges over $[0,3b-3i]$. On the other hand, $\mathcal{A}_2(s)$ corresponds to pairs $(A,i)$ where $A\in [2b-2i+1, 3b-3i+1]$.
        \item In the case that $i$ is odd with $1\leq i \leq b-1$, $p^{1-s}(\mathcal{C}_4(s)+\mathcal{C}_2(s)+\mathcal{C}_3(s))$ contributes to $(A,i)$ where $A$ ranges over $[0,3b-3i+1]$. On the other hand, $\mathcal{C}_1(s)$ corresponds to $(A,i)$ where $A\in [2b-2i+1, 3b-3i]$.
        \item In the case of $i=b$, pairs $(A,b)$ arise from $\mathcal{A}_1(s)+\mathcal{A}_2(s)$ where $A$ ranges over $[0,1]$ if $b$ is even, and arise from $p^{1-s}(\mathcal{C}_2(s)+\mathcal{C}_3(s))$ where $A$ ranges over $[0,1]$ if $b$ is odd. 
    \end{itemize}
    Summing over all $0\leq i\leq b$, we conclude that the derived multiset of pairs $(A,B)$ from $P_{(1\ 1^2)}^{a,b}(s)$ is equivalent to the target multiset $\mathcal{M}_{\text{odd}}$.
    This completes the proof for the case when $a$ is odd.
\end{proof}

\subsubsection{The case where $\sigma_p(E)=(1\ 1\ 1)$}\label{sec:reformulation for split}
Recall the isomorphism $E_p\cong \mathbb{Q}_p\times \mathbb{Q}_p \times \mathbb{Q}_p$. 
Then, the maximal order is given by $\mathcal{O}_{E_p}= \mathbb{Z}_p\times \mathbb{Z}_p\times \mathbb{Z}_p$ under this identification.
By \cite[Proposition 5.7.(1)]{CHL2}, any order is of the form \[\mathcal{O}_{a,b,c}=\mathbb{Z}_p\langle (1,1,1),(0,p^a,0),(0,c,p^b)\rangle\] where $a,b\geq 0$ and $c\in \mathbb{Z}_p$ where 
$\begin{cases}
        c\in (p^{\lceil\frac{a}{2}\rceil}) & \text{if }a\leq 2b;\\
        c\in (p^{a-b}) \text{ or }c\in p^b+(p^{a-b}) &\text{if }a>2b.
    \end{cases}$.

\begin{lemma}\label{lem:for reformulation of split}
    Every Gorenstein order $R_p$ over $\mathbb{Z}_p$ is equivalent to $\mathbb{Z}_p\langle (1,1,1), (0,p^{a},0),(0,c,p^{b}) \rangle $ such that $\ord(c)=\ord(c-p^{b})=a/2\leq b$, up to permutation.  
\end{lemma}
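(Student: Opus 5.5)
I will start from the normal form of \cite[Proposition 5.7.(1)]{CHL2}, $R_p=\mathcal{O}_{a,b,c}=\mathbb{Z}_p\langle (1,1,1),(0,p^a,0),(0,c,p^b)\rangle$, and use the Gorenstein criterion for orders in $\mathbb{Z}_p^3$. The natural tool is the dual $R_p^\vee$ under the trace pairing: $R_p$ is Gorenstein exactly when $R_p^\vee$ is an invertible, i.e.\ principal, fractional $R_p$-ideal. Equivalently, since $R_p$ is local with residue field $\mathbb{F}_p$ when it is not maximal in a factor, one can require $\dim_{\mathbb{F}_p}(R_p^\vee/\mathfrak{m}R_p^\vee)=1$. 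If \cite[Section 5]{CHL2} already contains a Gorenstein criterion in terms of $(a,b,c)$, analogous to \cite[Corollaries 5.3 and 5.6]{CHL2} used above, I will invoke it directly. Otherwise I will compute the dual basis of $R_p$ explicitly from the $3\times 3$ matrix of generators, which is upper triangular up to the column $c$.

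\textbf{Case $a>2b$.} Here $c\in(p^{a-b})$ or $c\in p^b+(p^{a-b})$. I will first show that such an order is never Gorenstein unless it becomes a case $a'\le 2b'$ after permuting the three factors. Permuting coordinates changes the generators: for instance, swapping the second and third factors and rescaling replaces $(a,b,c)$ by roughly $(b+\cdot,\cdot,\cdot)$. So the first step is to tabulate how the conductor exponents $(f_1,f_2,f_3)$ of $R_p$ transform under $S_3$ and to express $a,b,\ord(c)$ through them. A symmetric description is convenient: for each pair $i\neq j$ let $n_{ij}$ be the largest $n$ with $x_i\equiv x_j \pmod{p^n}$ for all $x\in R_p$. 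In the normal form $n_{12}=\min(a,\ord(c))$, $n_{13}=\min(a,b,\ldots)$ and $n_{23}=\ord(c-p^b)$ roughly. I then expect the Gorenstein condition to be that the two smallest of $n_{12},n_{13},n_{23}$ are equal, and that the index $S(R_p)=a+b$ matches.

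\textbf{Case $a\le 2b$.} Here $c\in(p^{\lceil a/2\rceil})$. I will check directly that Gorensteinness forces $a$ even and $\ord(c)=a/2$. If $\ord(c)>a/2$, the element $(0,c,p^b)$ together with $(0,p^a,0)$ makes the dual need two generators, so the order is not Gorenstein. Then $\ord(c-p^b)=a/2$ is automatic when $a/2<b$. When $a/2=b$ it must be imposed; otherwise we permute the second and third factors to reach that situation. Conversely, for such $(a,b,c)$ I will exhibit a generator of $R_p^\vee$, or verify the one-dimensional socle of $R_p/pR_p$. The converse is not strictly needed for the lemma as stated, but it is cheap.

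\textbf{Main obstacle.} I expect the hardest part to be the bookkeeping of the permutation action together with excluding $a>2b$. My first attempt will be the symmetric reformulation via the three congruence depths $n_{ij}$: the Gorenstein condition should read ``the minimum of $n_{12},n_{13},n_{23}$ is attained twice,'' and choosing the ordering in which the distinguished factor is first yields the normal form with $\ord(c)=\ord(c-p^b)=a/2\le b$.
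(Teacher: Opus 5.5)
\textbf{The gap.} Your proposed symmetric Gorenstein criterion is vacuous. For every order $R_p\subset\mathbb{Z}_p^3$ and every $x\in R_p$ we have $x_1-x_3=(x_1-x_2)+(x_2-x_3)$. Hence $n_{13}\ge\min(n_{12},n_{23})$, and likewise for the other pairs, so the minimum of $n_{12},n_{13},n_{23}$ is attained at least twice for \emph{every} order. For example, $\mathbb{Z}_p+p\mathbb{Z}_p^3=\mathcal{O}_{1,1,0}$ has $n_{12}=n_{13}=n_{23}=1$, yet it is not Gorenstein: its conductor $p\mathbb{Z}_p^3$ has index $p^3\neq p^{2S}=p^4$. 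It is also not of the asserted shape. The only place where real content could enter is your unspecified remark that ``the index matches,'' and that is exactly the step the proof needs.

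There is a second concrete error, in the case $a=2b\ge 2$ with $\ord(c)=b<\ord(c-p^b)$. You propose to permute the second and third factors, but such an order is \emph{not} Gorenstein: it is case (2) of the criterion in the proof of \cite[Corollary 5.10]{CHL2}, with $\alpha=2\beta$. Gorensteinness is invariant under permuting factors, so no permutation can produce the required form. This case must be excluded using the hypothesis, not repaired. (Also, in the normal form one has $n_{13}=b$ and $n_{23}=\min(a,\ord(c-p^b))$.)

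For $a>2b$ your plan gives no mechanism beyond the flawed criterion. The paper handles this case by taking the three-case criterion from the proof of \cite[Corollary 5.10]{CHL2}. When $\alpha>2\beta$ it makes an explicit change of basis with a unit $\xi$, then permutes the factors, landing on $(a,b)=(2\beta,\alpha-\beta)$.

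\textbf{How to repair your route.} The $n_{ij}$ idea works once you use the correct criterion, and it then gives a cleaner argument than the paper's.
Project $R_p$ onto factors $i,j$. The image is $\mathbb{Z}_p(1,1)+p^{n_{ij}}\mathbb{Z}_p^2$ and the kernel is $R_p\cap\mathbb{Z}_pe_k=p^{f_k}\mathbb{Z}_pe_k$, so $S(R_p)=n_{ij}+f_k$.
The conductor criterion says $R_p$ is Gorenstein iff $[\mathcal{O}_{E_p}:\mathfrak{f}(R_p)]=[\mathcal{O}_{E_p}:R_p]^2$, i.e.\ $f_1+f_2+f_3=2S(R_p)$. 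Summing the three relations above, this becomes $S(R_p)=n_{12}+n_{13}+n_{23}$.
Now label the factors so that $n_{13}$ is maximal, and write $R_p=\mathcal{O}_{a,b,c}$. Then $b=n_{13}$ and $S=a+b$ give $a=n_{12}+n_{23}$.
The minimum is attained twice while $n_{13}$ is the maximum, so $n_{12}=n_{23}=a/2\le b$. Since $n_{12}=\min(a,\ord c)$ and $n_{23}=\min(a,\ord(c-p^b))$, this gives $\ord(c)=\ord(c-p^b)=a/2$ when $a>0$; if $a=0<b$, take $c=1$.
This avoids all case distinctions and explicit basis changes, at the cost of importing the conductor criterion for Gorenstein orders.

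Neither your repaired argument nor the paper's covers $a=b=0$ with $p=2$. No $c\in\mathbb{Z}_2$ has $\ord(c)=\ord(c-1)=0$, so the lemma literally fails for $R_p=\mathbb{Z}_2^3$. This is harmless, since $\tilde{L}_p(s,R)=1$ there.
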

\begin{proof}
    By the above argument, $R_p$ takes the form of $\mathcal{O}_{\alpha,\beta,\gamma}$ for some $\alpha,\beta\geq 0$ and $\gamma\in \mathbb{Z}_p$.
    The proof of \cite[Corollary 5.10]{CHL2} classifies the conditions for $\mathcal{O}_{\alpha,\beta,\gamma}$ to be Gorenstein as follows:
\begin{enumerate}
    \item If $\ord(\gamma) \ge \alpha$, then the Gorenstein condition is $\alpha=0$ or $\beta=0$. 
    
    \item If $\ord(\gamma) < \alpha$ and $\gamma-p^\beta \in (p^{\beta+1})$, then the Gorenstein condition is $\ord(\gamma-p^\beta) = \alpha-\beta > \beta$. 
    Here $\ord(\gamma)=\beta<\alpha$. 
    
    \item If $\ord(\gamma) < \alpha$ and $\gamma-p^\beta \notin (p^{\beta+1})$, then the Gorenstein condition is $\ord(\gamma) = \alpha/2 \le \beta$ or $\ord(\gamma) = \alpha-\beta > \beta$.
    \begin{itemize}
        \item If $\ord(\gamma) = \alpha/2 \le \beta$, then $\gamma-p^\beta \notin (p^{\beta+1})$ forces $\ord(\gamma-p^\beta) = \alpha/2$. 
        \item If $\ord(\gamma) = \alpha-\beta > \beta$, then $\ord(\gamma-p^\beta) = \beta$. 
    \end{itemize}
\end{enumerate}

In case (1), we have $\ord(\gamma) \ge \alpha$. By permuting the components of $\mathbb{Q}_p\times \mathbb{Q}_p\times \mathbb{Q}_p$ if necessary, we may and do assume that $\alpha=0$. 
Then, we have 
\[\mathbb{Z}_p\langle (1,1,1), (0,1,0),(0,\gamma,p^\beta) \rangle=\mathbb{Z}_p\langle (1,1,1), (0,1,0),(0,1,p^\beta) \rangle,\] 
since $(0,1,p^\beta) = (0,\gamma,p^\beta) + (1-\gamma)(0,1,0)$. We set $(a,b,c)=(0,\beta,1)$ if $\beta >0$. Then $a/2 = 0 \leq b$, and $\ord(c) = \ord(1) = 0 = a/2$. For $\beta>0$, one also checks $\ord(c-p^{b}) = \ord(1-p^\beta) = 0 = a/2$. 
If $\beta=0$ so that $R_p = \mathbb{Z}_p^3$, then we let $c$ be a non-trivial unit.

In case (2), we change the $\mathbb{Z}_p$-basis to obtain
\begin{align*}
\mathbb{Z}_p\langle (1,1,1), (0,p^\alpha,0),(0,\gamma,p^\beta) \rangle    
&=\mathbb{Z}_p\langle (1,1,1), (p^\alpha,0,p^\alpha),(\gamma,0,\gamma-p^\beta) \rangle\\
&=\mathbb{Z}_p\langle (1,1,1), (p^\alpha-\xi \gamma p^\beta,0,0),(\xi \gamma,0,p^{\alpha-\beta})\rangle,
\end{align*}
where
the second equality uses $\xi =p^{\alpha-\beta}/(\gamma-p^\beta)\in \mathbb{Z}_p^\times$. 
Since $\ord(\gamma p^\beta)=2\beta$ and $\alpha>2\beta$, we have $\ord(p^\alpha-\xi \gamma p^\beta)=2\beta$. 
Hence, the second generator is $(up^{2\beta}, 0, 0)$ for some unit $u \in \mathbb{Z}_p^\times$. 
Since $\ord(\gamma)=\beta$ and $\alpha>2\beta$, by scaling the second generator by a unit and permuting the first and second components, we obtain $\mathbb{Z}_p\langle (1,1,1), (0,p^{2\beta},0),(0,\xi \gamma,p^{\alpha-\beta})\rangle$. We set $(a,b,c)=(2\beta,\alpha-\beta,\xi \gamma)$.
Then, one checks that $\ord(c)=\ord(\xi \gamma)=\ord(\gamma)=\beta=a/2$. 
Since $\alpha>2\beta$, we have $a/2 < \alpha-\beta = b$ and $\ord(c-p^{b}) = \ord(\xi \gamma - p^{\alpha-\beta})=\beta=a/2$.

In case (3), if $\ord(\gamma)=\alpha/2\leq \beta$, we set $(a,b,c)=(\alpha,\beta,\gamma)$. The required conditions naturally hold. 
On the other hand, if $\ord(\gamma)=\alpha-\beta>\beta$, then we have
\begin{align*}
\mathbb{Z}_p\langle (1,1,1), (0,p^\alpha,0),(0,\gamma,p^\beta) \rangle    
&=\mathbb{Z}_p\langle (1,1,1), (p^\alpha,0,p^\alpha),(\gamma,0,\gamma-p^\beta) \rangle\\
&=\mathbb{Z}_p\langle (1,1,1), (0,0,p^\alpha-\xi p^\beta(\gamma-p^\beta)),(p^{\alpha-\beta},0,\xi(\gamma-p^{\beta}))\rangle,
\end{align*}
where $\xi=p^{\alpha-\beta}/\gamma \in \mathbb{Z}_p^\times$.
Since $\ord(\gamma-p^\beta)=\beta$ and $\alpha>2\beta$, we have $\ord(p^\alpha-\xi p^\beta(\gamma-p^\beta))=2\beta$. Hence, the second generator is $(0, 0, up^{2\beta})$ for some unit $u \in \mathbb{Z}_p^\times$.
By scaling the second generator by a unit $u^{-1}$ and applying the cycle permutation $(1\ 3 \ 2)$,
we obtain the order $\mathbb{Z}_p\langle (1,1,1), (0,p^{2\beta},0),(0,\xi(\gamma-p^{\beta}),p^{\alpha-\beta})\rangle$. 
We set $(a,b,c)=(2\beta,\alpha-\beta,\xi (\gamma-p^\beta))$.
Then, one checks that $\ord(c)=\ord(\xi(\gamma-p^\beta))=\ord(\gamma-p^\beta)=\beta=a/2$. 
Since $\alpha>2\beta$, we have $a/2 < \alpha-\beta = b$ and $\ord(c-p^{b}) = \ord(\xi(\gamma-p^\beta) - p^{\alpha-\beta})=\beta=a/2$.
\end{proof}

\begin{remark}\label{rmk:about a b in split}
    As in the cases $\sigma_p(E)=(1\ 2)$ and $\sigma_p(E)=(1\ 1^2)$, the integers $a$ and $b$ appearing in Lemma \ref{lem:for reformulation of split} are intrinsic invariants of $R_p$, which are uniquely determined regardless of the choice of generators.
    According to Lemma \ref{lem:for reformulation of split}, by permuting the components of $\mathbb{Z}_p\times \mathbb{Z}_p\times \mathbb{Z}_p$ if necessary, we may and do assume that $R_p=\mathcal{O}_{a,b,c}$ with $\ord(c)=\ord(c-p^b)=a/2\leq b$.
    Let $(p^{f_1})\times (p^{f_2})\times (p^{f_3})$ be the conductor $f(\mathcal{O}_{a,b,c})$ of $\mathcal{O}_{a,b,c}$ in $\mathcal{O}_{E_p}$.
    Then by the proof of \cite[Corollary 5.10]{CHL2}, we have $a=f_2$ and $b=f_1-\frac{f_2}{2}$.
    Henceforth, we denote such $a$ and $b$ by $a(R_p)$ and $b(R_p)$, respectively.
\end{remark}

\begin{proposition}\label{prop:first formula for split}
    Suppose that $\sigma_p(E)=(1\ 1\ 1)$. Let $a=a(R_p)$ and $b=b(R_p)$ (see Remark \ref{rmk:about a b in split}).
    Then we have
\begin{equation*}
\begin{aligned}
    \tilde{L}_p(s,R)=&p^{(a+b)s}\Bigg(p^{(a+b)(1-2s)}+(1-p^{-s})\bigg(\sum_{0<b'\leq b}p^{(a+b-b')(1-2s)}+\sum_{0<a'\leq \frac{a}{2}}2p^{(a+b-a')(1-2s)}\bigg)\Bigg)\\
         &+p^{(a+b)s}(1-p^{-s})^2 P_{(1\ 1\ 1)}^{a,b}(s)\end{aligned}
\end{equation*}
where 
         \begin{align*}
             P_{(1\ 1\ 1)}^{a,b}(s)=&\sum_{0 < a' \leq \frac{a}{2}} \sum_{0 < b' < \frac{a'}{2}} 2(p^{b'}-p^{b'-1})p^{(a+b-a'-b')(1-2s)}+\sum_{\substack{\frac{a}{2} < a' \leq a \\ 2 \mid a'}} p^{\frac{a}{2}-\frac{a'}{2}+(\frac{3a}{2}-\frac{3a'}{2})(1-2s)} \\
         &+\sum_{\substack{0 < a' \leq \frac{a}{2}\\ 2 \mid a'}} \Big((p^{\frac{a'}{2}}-2p^{\frac{a'}{2}-1})p^{(a+b-\frac{3}{2}a')(1-2s)}+\sum_{\frac{a'}{2}< b'\leq b-\frac{a'}{2}}(p^{\frac{a'}{2}}-p^{\frac{a'}{2}-1})p^{(a+b-a'-b')(1-2s)}\Big)
         \\&
         +(1+p^{1-s})\bigg(\sum_{0 < a' \leq \frac{a}{2}} \sum_{0 < b' < \frac{a'}{2}} 2p^{b'-1+(a+b-a'-b')(1-2s)}\\&+\sum_{0<a'\leq \frac{a}{2},\ 2|a'}\Big(2p^{\frac{a'}{2}-1+(a+b-\frac{3}{2}a')(1-2s)}+\sum_{\frac{a'}{2}< b'\leq b-\frac{a'}{2}}
        p^{\frac{a'}{2}-1+(a+b-a'-b')(1-2s)}
        \\&+\sum_{b-\frac{a'}{2}<b' \leq b}p^{b-b'+(a+b-a'-b')(1-2s)}\Big)+\sum_{0<a'\leq \frac{a}{2},\ 2\nmid a'}\sum_{\frac{a'}{2}\leq b'\leq b}
        p^{\min \left(\lfloor \frac{a'}{2} \rfloor, b-b' \right)+(a+b-a'-b')(1-2s)} \\
        &+\sum_{\frac{a}{2}<a'\leq a}\sum_{ \frac{a'}{2}+b-\frac{a}{2}< b' \leq b}p^{b-b'+(a+b-a'-b')(1-2s)}\bigg).
\end{align*}    
\end{proposition}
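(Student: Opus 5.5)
We will prove Proposition \ref{prop:first formula for split} in the same way as Propositions \ref{prop:first formula for unramlin} and \ref{prop:first formula for ramlin}: enumerate the overorders of $R_p=\mathcal{O}_{a,b,c}$, sort them by the ratio $\zeta_{\mathcal{O}_p}(s)/\zeta_{E_p}(s)$, and then substitute the Gorenstein criterion.

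\emph{Step 1: the zeta ratio.} By \cite[Proposition 5.7]{CHL2}, every overorder has the form $\mathcal{O}_{a',b',c'}$, with $S(R_p)=a+b$ and $[\mathcal{O}_{a',b',c'}:R_p]=p^{a+b-a'-b'}$. The ratio is determined by the number of maximal ideals, each of which has residue field $\mathbb{F}_p$:
\begin{itemize}
\item it equals $1$ for $\mathcal{O}_{E_p}=\mathbb{Z}_p^3$;
\item it equals $1-p^{-s}$ for orders with two maximal ideals, namely $\mathbb{Z}_p\times(\text{order in }\mathbb{Z}_p^2)$ for some choice of the isolated factor;
\item it equals $(1-p^{-s})^2$ for local overorders.
\end{itemize}
With the normalization of Lemma \ref{lem:for reformulation of split}, the overorders with two maximal ideals come from three sources. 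The family $a'=0$, $0<b'\le b$ gives the sum over $b'$. The families splitting off the first or the second factor give the term $2\sum_{0<a'\le a/2}$; this uses the symmetry $\ord(c)=\ord(c-p^b)=a/2$, which makes these two families equinumerous. All overorders with two maximal ideals are Gorenstein (Bass in rank $2$), so $h=1$ on them. This produces the first line of the formula.

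\emph{Step 2: the local overorders.} For fixed $(a',b')$, count the $c'$ modulo the relevant ideal for which $\mathcal{O}_{a',b',c'}\supset R_p$, together with the number of those that are Gorenstein. We follow the enumeration in the proof of \cite[Theorem 5.8/Corollary 5.10]{CHL2}, with the conditions (1)--(3) of Lemma \ref{lem:for reformulation of split} applied to $(a',b',c')$.

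Then rewrite the sum as
\[
h=(1+p^{1-s})-p^{1-s}\mathbf{1}_{\mathrm{Gor}},
\qquad -p^{1-s}=1-(1+p^{1-s}),
\]
exactly as before. Terms counting all overorders go inside $(1+p^{1-s})(\cdots)$, and Gorenstein terms give the coefficients $p^{a'/2}-p^{a'/2-1}$ and similar.

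The new feature compared with the $(1\ 2)$ case is the region $0<b'<a'/2$. This region is counted twice, by the symmetry between $c'$ and $c'-p^{b'}$, and it produces the $2(p^{b'}-p^{b'-1})$ and $2p^{b'-1}$ terms. The diagonal $b'=a'/2$ produces $p^{a'/2}-2p^{a'/2-1}$, because case (2) and case (3) with $\ord(c')=a'-b'$ each remove one class.

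\emph{Step 3: replace $\ord(c)$.} As in the earlier proofs, replace $\ord(c)$ by its Gorenstein value $a/2$, using the invariance argument of \cite{CHL2}. This collapses the conditions $a'\le\min(a,\ord(c))$ and their complements into the ranges $0<a'\le a/2$ and $a/2<a'\le a$ appearing in $P^{a,b}_{(1\ 1\ 1)}$.

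\emph{Main obstacle.} The hardest part is the bookkeeping in Step 2 for $b'<a'/2$ and $b'=a'/2$. There the containment condition $R_p\subset\mathcal{O}_{a',b',c'}$ must be intersected with the two residue constraints on $c'$ (namely $c'\in(p^{a'-b'})$ or $c'\in p^{b'}+(p^{a'-b'})$). I would check these counts by hand on small cases, such as $a=2,b=1$ and $a=4,b=2$, before trusting the general formula.
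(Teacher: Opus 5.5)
Your proposal follows the paper's proof. You take the overorders $\mathcal{O}_{a',b',c'}$ of $R_p=\mathcal{O}_{a,b,c}$ from \cite{CHL2}, sort them by $\zeta_{\mathcal{O}_p}(s)/\zeta_{E_p}(s)\in\{1,\,1-p^{-s},\,(1-p^{-s})^2\}$ according to whether both, exactly one, or neither of $a',b'$ vanish, import the counts of all and of Gorenstein overorders for fixed $(a',b')$ from \cite{CHL2}, and split $h$ on the local ones via $-p^{1-s}=1-(1+p^{1-s})$. The few slips are cosmetic and do not affect the counts: the $a'=0$ family isolates the second factor, so the two $b'=0$ families ($c'\equiv 0$ or $c'\equiv 1 \bmod p^{a'}$) isolate the first and third; on the diagonal $a'=2b'$ the two excluded classes are $c'\equiv 0$ and $c'\equiv p^{b'} \pmod{p^{b'+1}}$; and your Step 3 is vacuous, since the normalization of Lemma \ref{lem:for reformulation of split} already gives $\ord(c)=a/2$.
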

\begin{proof}
Since the structure of the ideal class monoid of an order in $\mathcal{O}_{E_p}$ is invariant under permutations of the $\mathbb{Z}_p$-factors of $\mathcal{O}_{E_p}=\mathbb{Z}_p\times\mathbb{Z}_p\times\mathbb{Z}_p$, we may and do assume that $R_p=\mathcal{O}_{a,b,c}$.
By \cite[Proposition 5.7.(2)]{CHL2}, any overorder of $R_p$ is of the form $\mathcal{O}_{a',b',c'}$.
Moreover, \cite[Proposition 5.7.(4)]{CHL2} yields that $S(R_p)=a+b$ and $[\mathcal{O}_{a',b',c'}:R_p]=p^{a+b-a'-b'}$.
    If $a'=b'=0$ (so that $\mathcal{O}_{a',b',c'}= \mathcal{O}_{E_p}$), then maximal ideals of $\mathcal{O}_{a',b',c'}$ are $(p)\times \mathbb{Z}_p\times \mathbb{Z}_p$, $\mathbb{Z}_p\times (p)\times \mathbb{Z}_p$, and $\mathbb{Z}_p\times \mathbb{Z}_p\times (p)$, and thus $\zeta_{\mathcal{O}_{a',b',c'}}(s)=\frac{1}{(1-p^{-s})^3}$.
    If exactly one of $a'$ and $b'$ is  $0$, 
    then $\mathcal{O}_{a',b',c'} \cong \mathbb{Z}_p\times R_p'$ where $R_p'\cong \mathbb{Z}_p\langle (1,1),(p^{\max(a',b')},0) \rangle\subset \mathbb{Z}_p\times \mathbb{Z}_p$ and thus $\zeta_{\mathcal{O}_{a',b',c'}}(s)=\frac{1}{(1-p^{-s})^2}$.
    Otherwise, $\mathcal{O}_{a',b',c'}$ is a local ring with the residue field $\mathbb{F}_p$.
    Therefore we have
\[\frac{\zeta_{\mathcal{O}_{a',b',c'}}(s)}{\zeta_{E_p}(s)}=
\left\{
\begin{array}{l l}
     1&  \text{if $a'=b'=0$};\\
     1-p^{-s}& \text{if exactly one of $a'$ and $b
     '$ is $0$};\\
     (1-p^{-s})^2 &\text{otherwise}.
\end{array}
\right.
\]
Then, the counting of overorders and Gorenstein overorders $\mathcal{O}_{a',b',c'}$ of $R_p=\mathcal{O}_{a,b,c}$ for fixed $a'$ and $b'$ in the proof of \cite[Theorem 5.9]{CHL2} yields that
\begin{align*}
    \tilde{L}_p(s,R)=
        &p^{(a+b)s}\Bigg(p^{(a+b)(1-2s)}+(1-p^{-s})\Big(\sum_{0<b'\leq b}p^{(a+b-b')(1-2s)}+\sum_{0<a'\leq \frac{a}{2}}2p^{(a+b-a')(1-2s)}\Big)\Bigg)\\
        &+p^{(a+b)s}(1-p^{-s})^2\Bigg((1+p^{1-s})\sum_{0< b'\leq b}\Big(
        \sum_{0< a'\leq \min(\frac{a}{2}, 2b')}p^{\min(\lfloor a'/2 \rfloor,b-b')+(a+b-a'-b')(1-2s)}
        \\&+\sum_{2b'<a'\leq \frac{a}{2}}2p^{b'+(a+b-a'-b')(1-2s)}+\sum\limits_{\substack{\frac{a}{2}<a'\leq \min(a, 2b')\\ \lceil a'/2 \rceil-b'\leq \frac{a}{2}-b}}p^{b-b'+(a+b-a'-b')(1-2s)}\Big)\\
        &\\
        &
        -p^{1-s}\Big(\sum_{\substack{0<b'\leq b}}\sum\limits_{\substack{0<a'\leq \min(\frac{a}{2}, 2b')\\ a'/2\leq b-b', ~2\mid a'}}(p^{a'/2}-p^{a'/2-1})p^{(a+b-a'-b')(1-2s)}
        -\sum\limits_{\substack{0<a'\leq\frac{a}{2}\\ 2\mid a'}}p^{a'/2-1+(a+b-\frac{3a'}{2})(1-2s)}\\
        &
        +\sum_{0<b'\leq b}\sum_{2b'<a'\leq \frac{a}{2}}2(p^{b'}-p^{b'-1})p^{(a+b-a'-b')(1-2s)}
        +\sum_{\substack{\frac{a}{2}<a'\leq a\\ ~2\mid a'}}p^{\frac{a}{2}-a'/2+(\frac{3a}{2}-\frac{3a'}{2})(1-2s)}\Big)\Bigg).
    \end{align*}
        Here, we apply the following reformulation to a component of $\tilde{L}_p(s,R)$:
    \begin{align*}
     &p^{S(R_p)s}(1-p^{-s})^2
    \sum_{\substack{R_p\subset\mathcal{O}_p\subset \mathcal{O}_{E_p}\\ \zeta_{\mathcal{O}_p}(s)/\zeta_{E_p}(s)=(1-p^{-s})^2}}h_{\mathcal{O}_p}(s)[\mathcal{O}_p:R_p]^{1-2s} 
    \\&=
         p^{S(R_p)s}(1-p^{-s})^2\Big(
    \sum_{\substack{R_p\subset\mathcal{O}_p\subset \mathcal{O}_{E_p}\\ \zeta_{\mathcal{O}_p}(s)/\zeta_{E_p}(s)=(1-p^{-s})^2}}(1+p^{1-s})[\mathcal{O}_p:R_p]^{1-2s} - \sum_{\substack{R_p\subset\mathcal{O}_p\subset \mathcal{O}_{E_p}\\ \zeta_{\mathcal{O}_p}(s)/\zeta_{E_p}(s)=(1-p^{-s})^2\\ \mathcal{O}_p :\ Gorenstein}}p^{1-s}[\mathcal{O}_p:R_p]^{1-2s}
    \Big).
    \end{align*}
    Interchanging the order of summation and applying the identity $-p^{1-s}=1-(1+p^{1-s})$ yields the desired formula.
\end{proof}
\begin{theorem}\label{thm:reformulation_for_split}
Suppose that $\sigma_p(E)=(1\ 1\ 1)$. Let $a=a(R_p)$ and $b=b(R_p)$ (see Remark \ref{rmk:about a b in split}). 
Then we have
\begin{align*}
    \tilde{L}_p(s,R) =& p^{(a+b)(1-s)}+(1-p^{-s})\left( p^{(a+b)(1-s)+2s-1}\sum_{i=0}^{\frac{a}{2}-1}3p^{(3s-1)i}+p^{(b+\frac{a}{2}-1)(1-s)+as}\sum_{i=0}^{b-\frac{a}{2}-1}p^{(2s-1)i} \right)\\
    &+(1-p^{-s})^2 p^{(a+b)s}\Bigg(\sum_{d'=0}^{b-\frac{a}{2}-1}p^{(1-2s)d'}\sum_{i=0}^{\frac{a}{2}}p^{(2-3s)i} \\
    &+p^{(b-\frac{a}{2})(1-2s)}\sum_{d'=1}^{\frac{a}{2}}p^{(3-6s)(\frac{a}{2}-d')}\left(\sum_{i=0}^{d'}p^{(2-3s)i}+p^{1-2s}\sum_{i=0}^{d'-1}p^{(2-3s)i}+p^{2-4s}\sum_{i=0}^{d'-2}p^{(2-3s)i}\right)\Bigg).
\end{align*}
\end{theorem}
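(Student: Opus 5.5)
The plan follows the template of Theorems \ref{thm:reformulation_for_unramlin} and \ref{thm:reformulation_for_ramlin}. We start from the expression of Proposition \ref{prop:first formula for split} and compare it with the claimed formula in three layers, according to the prefactors $1$, $(1-p^{-s})$ and $(1-p^{-s})^2$.

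\emph{Step 1: the case $a=0$.} Here $P^{0,b}_{(1\ 1\ 1)}=0$, and only $p^{bs}\bigl(p^{b(1-2s)}+(1-p^{-s})\sum_{0<b'\le b}p^{(b-b')(1-2s)}\bigr)$ survives. This is the quadratic computation of Example \ref{eg:when n=2} with $\chi=1$. Writing $p^{bs}\sum_{b'=0}^{b-1}p^{(1-2s)b'}=p^{(b-1)(1-s)}\sum_i p^{(2s-1)i}$, one obtains the stated formula directly: the $3\sum$ term is empty, and the $(1-p^{-s})^2$ bracket reduces to $\sum_{d'=0}^{b-1}p^{(1-2s)d'}$. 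A factor $(1-p^{-s})$ is absorbed from the split of $p^{bs}\sum$ exactly as in the $a=0$ computation of Theorem \ref{thm:reformulation_for_unramlin}.

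\emph{Step 2: the case $a=2m\ge 2$, reduction to a multiset identity.} Write $b=m+k$ and $T=3m+k$. First rewrite the linear part of the statement, $p^{T(1-s)}+(1-p^{-s})(\cdots)$, as $p^{Ts}\bigl(p^{T(1-2s)}+(1-p^{-s})(\sum_{0<b'\le b}p^{(T-b')(1-2s)}+2\sum_{0<a'\le m}p^{(T-a')(1-2s)})\bigr)$ plus a correction term. This correction is $(1-p^{-s})^2p^{Ts}$ times sums of the same shape as family (ii) of \eqref{eq:rhs_of_unramsplit}, now with multiplicity reflecting the three conjugate components. It arises by expanding the geometric series $\sum 3p^{(3s-1)i}$ and $\sum p^{(2s-1)i}$ against $\sum p^{(T-j)(1-2s)}$, just as the passage from \eqref{eq:unramlin_eq} to \eqref{eq:reformulation of state in thm of unramlin} was done.

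After this rewriting, everything reduces to an identity $P^{a,b}_{(1\ 1\ 1)}(s)=Q^{a,b}_{(1\ 1\ 1)}(s)$. Here $Q$ consists of the families (i) and (iii) of Theorem \ref{thm:reformulation_for_unramlin}, together with the new family (ii)$'$ produced by the correction. We encode each monomial as $p^{A(1-2s)+B(2-3s)}$ by the pair $(A,B)$, and compare multisets.

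\emph{Step 3: computing the multisets.} For $Q$, families (i) and (iii) are already computed in the proof of Theorem \ref{thm:reformulation_for_unramlin}. Family (ii)$'$ gives the segments $\{(A,i):2(m-i)\le A\le T-3i-2\}$ together with an extra copy coming from the factor $3$ versus $1$. For $P$, we split it as $\mathcal A+(1+p^{1-s})\mathcal C$, as in the $(1\ 2)$ case. The new ingredients are the terms $2(p^{b'}-p^{b'-1})$ and $2p^{b'-1}$ over $0<b'<a'/2$, together with the modified boundary term $(p^{a'/2}-2p^{a'/2-1})$ and its companion $2p^{a'/2-1}$. We cancel the negative parts against the corresponding $\mathcal C$-pieces, so that
\[
P=\widehat{\mathcal A}+\widehat{\mathcal C}+p^{1-s}\mathcal C.
\]
The base terms give pairs with even $B$, and the $p^{1-s}$-shifted terms give pairs with odd $B$, via $B=2X$, $A=Y-3X$, respectively $B=2X+1$, $A=Y-3X-1$. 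We then collect, row by row in $i$, the $A$-intervals, treating $i=0$ and $i=m$ separately, and then even and odd $i$ in $1\le i\le m-1$.

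\emph{Main obstacle.} The hard part is this row-by-row bookkeeping for the new doubled sums over $0<b'<a'/2$. They contribute the pairs in the region of small $A$, namely $A<2(m-i)$, with multiplicity $2$. The task is to check that these match the extra copies forced by the coefficient $3$ in the $(1-p^{-s})$-layer, with the endpoints $A=T-3i-1$ and $T-3i-2$ aligning exactly. I would first verify the identity for small cases ($m=1,2$ with $k=0,1$) to pin down the target multiset $\mathcal M_{(1\ 1\ 1)}$. I would then prove that both sides equal it, interval by interval, reusing the Case 1–3 analysis of Theorem \ref{thm:reformulation_for_unramlin} verbatim for the terms shared with $P^{a,b}_{(1\ 2)}$.
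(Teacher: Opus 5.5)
Your overall route is essentially the paper's. You start from Proposition \ref{prop:first formula for split}, dispose of $a=0$ directly, and rewrite the $(1-p^{-s})$-layer so that the coefficient $3$ becomes a factor $3$ on the $d'\le m-1$ part of family (ii). You then encode monomials as pairs $(A,B)$ and recycle the $(1\ 2)$ analysis.

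The paper packages the recycling more economically. It writes $P^{a,b}_{(1\,1\,1)}=P^{a,b}_{(1\,2)}+\Delta_P$ and $Q^{a,b}_{(1\,1\,1)}=Q^{a,b}_{(1\,2)}+2\sum_{d'=0}^{m-1}p^{(T-1-d')(1-2s)+d's}\sum_{i=0}^{d'-1}p^{-is}$. It then invokes $P^{a,b}_{(1\,2)}=Q^{a,b}_{(1\,2)}$ and checks only the short residual identity (\ref{eq:final equation of reformulation split case}), so no new row-by-row analysis is needed.

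The step you single out as the main obstacle, however, is mispredicted, and the matching you propose there would not close.

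\begin{enumerate}
\item The new pieces combine as $2(p^{b'}-p^{b'-1})+2(1+p^{1-s})p^{b'-1}=2p^{b'}(1+p^{-s})$ and $-2p^{a'/2-1}+2(1+p^{1-s})p^{a'/2-1}=2p^{a'/2-s}$.
\item Since $p^{-s}$ corresponds to the pair $(2,-1)$, the doubled sums over $0<b'<a'/2$ give the pairs $(T-a'-2i,i)$ with $i=2b'$ or $i=2b'-1$. The boundary terms give $(T-3i-1,i)$ for odd $i$.
\item Altogether $\Delta_P$ is exactly two copies of $\{(A,i):T-m-2i\le A\le T-3i-1\}$ for each $1\le i\le m-1$.
\end{enumerate}

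Because $T-m-2i=2(m-i)+k$, these pairs lie at the top of each row, not in the region $A<2(m-i)$. That region already has multiplicity one in the $(1\ 2)$ multiset $\mathcal{M}$ and must keep it.

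Likewise, the coefficient $3$ does not produce ``an extra copy'' of $\{2(m-i)\le A\le T-3i-2\}$. It produces two extra copies of the $d'\le m-1$ part of family (ii) only, and that part is precisely $\{(A,i):T-m-2i\le A\le T-3i-1\}$.

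With this correction the two surpluses coincide and your argument goes through. As written, though, you would be searching for the new pairs, and for the copies forced by the $3$, in the wrong part of each row.
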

\begin{proof}
By Proposition \ref{prop:first formula for split}, we have
\begin{equation}\label{eq:full_LHS of split}
\begin{aligned}
    \tilde{L}_p(s,R)=&p^{(a+b)s}\Bigg(p^{(a+b)(1-2s)}+(1-p^{-s})\bigg(\sum_{0<b'\leq b}p^{(a+b-b')(1-2s)}+\sum_{0<a'\leq \frac{a}{2}}2p^{(a+b-a')(1-2s)}\bigg)\Bigg)\\
         &+p^{(a+b)s}(1-p^{-s})^2 P^{a,b}_{(1\ 1\ 1)}(s).
\end{aligned}
\end{equation}
We note that 
\begin{align*}
    P_{(1\ 1\ 1)}^{a,b}(s)=&\sum_{0 < a' \leq \frac{a}{2}} \sum_{0 < b' < \frac{a'}{2}} 2(p^{b'}-p^{b'-1})p^{(a+b-a'-b')(1-2s)}-\sum_{\substack{0 < a' \leq \frac{a}{2}\\ 2 \mid a'}}2p^{\frac{a'}{2}-1+(a+b-\frac{3}{2}a')(1-2s)} \\
        &+(1+p^{1-s})\Big(\sum_{0 < a' \leq \frac{a}{2}} \sum_{0 < b' < \frac{a'}{2}} 2p^{b'-1+(a+b-a'-b')(1-2s)}+\sum_{0<a'\leq \frac{a}{2},\ 2|a'}2p^{\frac{a'}{2}-1+(a+b-\frac{3}{2}a')(1-2s)}\Big)\\
        &+P_{(1\ 2)}^{a,b}(s).
\end{align*}

In the case that $a=0$, we have 
\begin{align*}
       \tilde{L}_p(s,R)=&p^{bs}\Big(p^{b(1-2s)}+(1-p^{-s})\sum_{0<b'\leq b}p^{(b-b')(1-2s)}\Big)\\
    =& p^{b(1-s)}+(1-p^{-s})p^{bs}\sum_{b'=0}^{b-1}p^{(1-2s)b'}\\
    =& p^{b(1-s)}+ (1-p^{-s})\Big( p^{(b-1)(1-s)}\sum_{i=0}^{b-1}p^{(2s-1)i}+(1-p^{-s})p^{bs}\sum_{d'=0}^{b-1}p^{(1-2s)d'}\Big).
\end{align*}
It proves the theorem when $a=0$ and thus we may and do assume that $a\geq 2$ (note that $2|a$ by Remark \ref{rmk:about a b in split}).

In the case that $a\geq 2$, after factoring out $p^{(a+b)s}$, the assertion of the theorem is equivalent to
\begin{align*}
    &p^{(a+b)s}\Bigg(p^{(a+b)(1-2s)}+(1-p^{-s})\bigg(\sum_{0<b'\leq b}p^{(a+b-b')(1-2s)}+\sum_{0<a'\leq \frac{a}{2}}2p^{(a+b-a')(1-2s)}\bigg)\Bigg)\\
    &+p^{(a+b)s}(1-p^{-s})^2 Q_{(1\ 1\ 1)}^{a,b}(s).    
    \end{align*}
where the first line is identical to that of (\ref{eq:full_LHS of split}) and
    \begin{align*}
     Q_{(1\ 1\ 1)}^{a,b}(s)=&\sum_{d'=0}^{b-\frac{a}{2}-1}p^{(1-2s)d'}\sum_{i=0}^{\frac{a}{2}}p^{(2-3s)i}  +3\sum_{d'=0}^{a/2-1}p^{(a+b-1-d')(1-2s)+d's}\sum_{i=0}^{d'-1}p^{-is}\\
        &+\sum_{d'=a/2}^{b-1}p^{(a+b-1-d')(1-2s)+(a/2-1)s}\sum_{i=0}^{a/2-2}p^{-is}\\
    &+p^{(b-\frac{a}{2})(1-2s)}\sum_{d'=1}^{\frac{a}{2}}p^{(3-6s)(\frac{a}{2}-d')}\left(\sum_{i=0}^{d'}p^{(2-3s)i}+p^{1-2s}\sum_{i=0}^{d'-1}p^{(2-3s)i}+p^{2-4s}\sum_{i=0}^{d'-2}p^{(2-3s)i}\right).
\end{align*}
We note that
 \begin{align*}
     Q_{(1\ 1\ 1)}^{a,b}(s)=&2\sum_{d'=0}^{a/2-1}p^{(a+b-1-d')(1-2s)+d's}\sum_{i=0}^{d'-1}p^{-is}+Q_{(1\ 2)}^{a,b}(s)   
\end{align*}
where $Q_{(1\ 2)}^{a,b}(s)$ is defined in \eqref{eq:rhs_of_unramsplit}. 

Since $a$ is even and satisfies $a\leq 2b$ by Lemma \ref{lem:for reformulation of split}, the proof of Theorem  \ref{thm:reformulation_for_unramlin} (see (\ref{eq:comparison_splitunram})) yields $P_{(1\ 2)}^{a,b}(s)=Q_{(1\ 2)}^{a,b}(s)$ and hence it suffices to show that
\[
P^{a,b}_{(1\ 1\ 1)}(s)-P^{a,b}_{(1\ 2)}(s)=Q^{a,b}_{(1\ 1\ 1)}(s)-Q^{a,b}_{(1\ 2)}(s).
\]
This assertion is equivalent to verifying that
\begin{equation}\label{eq:final equation of reformulation split case}
\begin{aligned}
    &\sum_{0 < a' \leq \frac{a}{2}} \sum_{0 < b' < \frac{a'}{2}} 2(p^{b'}-p^{b'-1})p^{(a+b-a'-b')(1-2s)}-\sum_{\substack{0 < a' \leq \frac{a}{2}\\ 2 \mid a'}}2p^{\frac{a'}{2}-1+(a+b-\frac{3}{2}a')(1-2s)} \\
        &+(1+p^{1-s})\Big(\sum_{0 < a' \leq \frac{a}{2}} \sum_{0 < b' < \frac{a'}{2}} 2p^{b'-1+(a+b-a'-b')(1-2s)}+\sum_{\substack{0<a'\leq \frac{a}{2}\\ 2 \mid a'}}2p^{\frac{a'}{2}-1+(a+b-\frac{3}{2}a')(1-2s)}\Big)\\
        &=2\sum_{d'=0}^{a/2-1}p^{(a+b-1-d')(1-2s)+d's}\sum_{i=0}^{d'-1}p^{-is}.
\end{aligned}
\end{equation}
On the left-hand side of (\ref{eq:final equation of reformulation split case}), distributing the factor $1+p^{1-s}$ and simplifying the expression yields
\begin{equation}\label{eq:equation2 in split}
\begin{aligned}
    &\sum_{0 < a' \leq \frac{a}{2}} \sum_{0 < b' < \frac{a'}{2}} 2p^{b'+(a+b-a'-b')(1-2s)}+\sum_{0 < a' \leq \frac{a}{2}} \sum_{0 < b' < \frac{a'}{2}} 2p^{b'+(a+b-a'-b')(1-2s)-s}\\
        &+\sum_{\substack{0<a'\leq \frac{a}{2}\\ 2 \mid a'}}2p^{\frac{a'}{2}+(a+b-\frac{3}{2}a')(1-2s)-s}.
\end{aligned}
\end{equation}
For a fixed $a'$, the set of exponents of $p$ in the first summation on the first line is $\{(a+b-a')(1-2s)+2b's\}_{1\leq b' <\frac{a'}{2}}$, and that in the second summation is $\{(a+b-a')(1-2s)+(2b'-1)s\}_{1\leq b' <\frac{a'}{2}}$. 
Taking their union, we obtain $\{(a+b-a')(1-2s)+is\}_{1\leq i \leq a'-2}$ if $a'$ is even, and $\{(a+b-a')(1-2s)+is\}_{1\leq i \leq a'-1}$ if $a'$ is odd.
Then, by relabeling the index $a'$ as $d'$, the first line of (\ref{eq:equation2 in split}) can be rewritten as follows:
\begin{equation}\label{eq:equation3 in split}
\sum_{\substack{1 \leq d' \leq \frac{a}{2}\\ 2 \mid d'}} \sum_{1 \leq  i \leq d'-2} 2p^{(a+b-d')(1-2s)+is}+\sum_{\substack{1 \leq d' \leq \frac{a}{2}\\ 2 \nmid d'}} \sum_{1 \leq  i \leq d'-1} 2p^{(a+b-d')(1-2s)+is}.
\end{equation}
On the other hand, the last term in (\ref{eq:equation2 in split}) is equal to
\[
\sum_{\substack{0<d'\leq \frac{a}{2}\\ 2 \mid d'}}2p^{(a+b-d')(1-2s)+(d'-1)s},
\]
which precisely provides the missing terms for $i=d'-1$ in the first summation of (\ref{eq:equation3 in split}). Combining these, we obtain
\[
\sum_{1\leq d'\leq \frac{a}{2}}\sum_{1\leq i\leq d'-1}2p^{(a+b-d')(1-2s)+is}=\sum_{d'=0}^{\frac{a}{2}-1}\sum_{ i=1}^{d'} 2p^{(a+b-1-d')(1-2s)+is}=\sum_{d'=0}^{\frac{a}{2}-1}\sum_{ i=0}^{d'-1} 2p^{(a+b-1-d')(1-2s)+(d'-i)s}.
\]
This completes the proof.
\end{proof}

\section{Functional equation of the $L$-function for a cubic order}
We now prove the main theorem.
\begin{theorem}\label{thm:main thm}
    For a Gorenstein order $R$ of a cubic number field $E$, we have
    \[
    \Lambda(s,R)=\Lambda(1-s,R).
    \]
\end{theorem}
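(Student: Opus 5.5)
By Lemma \ref{lem:definition of tilde L}, Proposition \ref{prop:euler_product} and Remark \ref{rmk:finite alive for local factors}, it suffices to prove the local identity $\tilde{L}_p(s,R)=\tilde{L}_p(1-s,R)$ for every prime $p$. We will do this using the explicit polynomials of Theorems \ref{thm:reformulation_for_unram}, \ref{thm:reformulation_for_ram}, \ref{thm:reformulation_for_unramlin}, \ref{thm:reformulation_for_ramlin} and \ref{thm:reformulation_for_split}, according to $\sigma_p(E)$. The natural variables are $u=p^{1-2s}$ and $v=p^{2-3s}$. Under $s\mapsto 1-s$ these become $p^{2s-1}=u^{-1}$ and $p^{3s-1}$, while the prefactor $p^{S(R_p)s}$ becomes $p^{S(R_p)(1-s)}=p^{S(R_p)s}u^{S(R_p)}$. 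Hence in each case the local functional equation says that the bracketed polynomial $F(u,v)$ is invariant under the involution $p^{Ns}F\mapsto p^{N(1-s)}F|_{s\mapsto1-s}$. This is a palindromic-type symmetry, which we will verify monomial by monomial via the pairs $(A,B)$ used in the proofs of Section \ref{sec:reformulation of local factors}.

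\textbf{Irreducible cases.} For $\sigma_p(E)=(3)$ we write $N=3d$. The factor $1+p^{-s}+p^{-2s}$ is sent to $p^{2s-2}(1+p^{1-s}+p^{2-2s})$, which is not of the same shape. So we first multiply the inner expression out and rewrite everything as a sum of monomials $p^{xs+y}$. Then, for each fixed $d'$, we pair the term $\sum_{i=0}^{d'}v^i+u\sum_{i<d'}v^i+u^2\sum_{i\le d'-2}v^i$ with its image. The goal is a finite double-sum identity: a reflection $d'\mapsto d-d'$ combined with $i\mapsto d'-i$ should permute the exponents. The same scheme applies to $(1^3)$ with $N=3d$ and $N=3d+1$, where the third inner sum runs to $d'-1$ (respectively, the second runs to $d'$). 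I expect one lemma to handle all three, stating that $\sum_{d'}\sum_i$ of $p^{\text{linear}(d',i,s)}$ over a triangle is symmetric under the reflection.

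\textbf{Split cases.} Here the formulas have the common shape
\[
\text{boundary terms}+(1-p^{-s})^{e}p^{Ns}\bigl(\mathrm{(i)}+\mathrm{(iii)}\bigr).
\]
Term (iii) is the unramified or ramified cubic expression already handled above, scaled by $p^{(b-a/2)(1-2s)}$. Term (i) is a rectangle $\sum_{d'<b-a/2}u^{d'}\sum_{i\le a/2}v^i$. The factors $(1-p^{-s})$ and $(1+p^{-s})$ again do not transform to themselves, so the boundary terms must compensate for their failure of invariance.

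I would first treat $(1\ 1^2)$ with $a$ even, since it has the simplest factor $(1-p^{-s})$. Expanding $(1-p^{-s})$, the error $p^{-s}$ times (i)+(iii) should telescope against the two geometric sums $p^{(a+b-1)(1-s)+s}\sum p^{(3s-1)i}$ and $p^{(b+a/2-1)(1-s)+as}\sum p^{(2s-1)i}$. This is the key identity to establish, by summing the geometric series in closed form and clearing denominators $(1-p^{3s-1})(1-p^{2s-1})$. Once it holds, $(1\ 2)$ should follow by writing $1+p^{-s}$ and $1-p^{-2s}=(1-p^{-s})(1+p^{-s})$ and factoring. Likewise $(1\ 1\ 1)$ should follow from $(1-p^{-s})^2$ together with the coefficient $3$, via the decomposition $Q_{(1\,1\,1)}=Q_{(1\,2)}+2(\cdots)$ already exploited in Theorem \ref{thm:reformulation_for_split}. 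The odd case $a=2b+1$ of $(1\ 1^2)$ is analogous.

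\textbf{Main obstacle.} The hardest step will be this central split identity: the boundary geometric sums must exactly absorb the non-invariance of the $(1\mp p^{-s})$ factors. The plan is to clear denominators and compare rational functions in $p^{-s}$. If the closed form becomes unwieldy, the fallback is to compare multisets of exponent pairs, as in Section \ref{sec:reformulation of local factors}.
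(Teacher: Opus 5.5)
Your plan follows the paper's proof essentially step for step: reduction to the local identities, a reflection lemma for triangular double sums in the irreducible cases (the paper's Lemma \ref{lem:double sum formula for nonsplit case} with its two shifted variants), and in the split cases a central telescoping identity for $(1\ 1^2)$ with $a$ even, from which $(1\ 2)$ and $(1\ 1\ 1)$ are derived algebraically, with $a=2b+1$ treated separately. Be careful that this last derivation is not pure factoring and uses more than the invariance of $L^{a,b}_{(1\ 1^2)}$ under $s\mapsto 1-s$: because the third inner sum in (iii) stops at $d'-2$ rather than $d'-1$, one gets $\tilde{L}_p(s,R)=(1\pm p^{-s})L^{a,b}_{(1\ 1^2)}(s)\mp\bigl(p^{T(1-s)-s}+(1-p^{-2s})\mathcal{C}(s)\bigr)$ (upper sign for $(1\ 2)$, lower for $(1\ 1\ 1)$, with $T=a+b$ and $\mathcal{C}(s)=p^{(T-1)(1-s)+s}\sum_{i=0}^{a/2-1}p^{(3s-1)i}$), so you must keep your explicit closed form for $(1-p^{2s-1})L^{a,b}_{(1\ 1^2)}(s)$, which is exactly the paper's Key Equation \eqref{eq:Key_equation}.
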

\begin{proof}
    By Lemma \ref{lem:definition of tilde L}, it suffices to show that $\tilde{L}(s,R)=\tilde{L}(1-s,R)$.
    By Proposition \ref{prop:euler_product} and Remark \ref{rmk:finite alive for local factors}, this reduces to proving the local functional equations,  $\tilde{L}_p(s,R)=\tilde{L}_p(1-s,R)$ which are proved in Theorems \ref{prop:local_ftnl_eqn}-\ref{thm:functional_equation_quasisplit}. 
\end{proof}
It therefore remains to prove the local functional equations.
Throughout this section, we derive them using the reformulations obtained in Section \ref{sec:reformulation of local factors}.
\subsection{Local functional equation at $p$ where $E$ is irreducible}
Before proceeding to the main theorem, we first represent a key lemma which will be used to verify the local functional equations for a prime $p$ where $E$ is irreducible.
\begin{lemma}\label{lem:double sum formula for nonsplit case}
For any integer $d\geq 0$, we have the identity
\begin{equation}\label{eq:lemma for irred case}
p^{3d(1-s)}\sum_{d'=0}^d p^{3d'(2s-1)}\sum_{i=0}^{d'}p^{(2-3s)i}=p^{3ds}\sum_{d'=0}^d p^{3d'(1-2s)}\sum_{i=0}^{d'}p^{(3s-1)i}.
\end{equation}
Consequently, we also obtain the following two identities:
\begin{gather*}
    p^{3(d+1)(1-s)}\sum_{d'=1}^{d} p^{3d'(2s-1)}\sum_{i=0}^{d'-1}p^{(2-3s)i} = p^{3(d+1)s}\sum_{d'=1}^{d} p^{3d'(1-2s)}\sum_{i=0}^{d'-1}p^{(3s-1)i}, \textit{ and} \\
    p^{3(d+2)(1-s)}\sum_{d'=2}^{d} p^{3d'(2s-1)}\sum_{i=0}^{d'-2}p^{(2-3s)i} = p^{3(d+2)s}\sum_{d'=2}^{d} p^{3d'(1-2s)}\sum_{i=0}^{d'-2}p^{(3s-1)i}.
\end{gather*}
\end{lemma}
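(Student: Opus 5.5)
The plan is to prove \eqref{eq:lemma for irred case} by an explicit bijection between the monomials on the two sides, and then to obtain the two consequences by shifting indices in the main identity.

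\textbf{Step 1: expand both sides into monomials.} Both sides are finite sums of monomials $p^{X+Ys}$ indexed by the same set of pairs
\[
I_d=\{(d',i)\in\mathbb{Z}^2 : 0\le i\le d'\le d\}.
\]
The term of the left-hand side indexed by $(d',i)$ has constant exponent $3d-3d'+2i$ and $s$-coefficient $-3d+6d'-3i$. The term of the right-hand side indexed by $(d_2,i_2)$ has constant exponent $3d_2-i_2$ and $s$-coefficient $3d-6d_2+3i_2$.

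\textbf{Step 2: find the matching map.} Equating these two exponent pairs gives two linear equations in $(d_2,i_2)$. Solving them yields the unique solution $i_2=i$ and $d_2=d-d'+i$. The map $(d',i)\mapsto(d-d'+i,\,i)$ sends $I_d$ into itself, because $i\le d-d'+i\le d$ follows from $i\le d'$. It is also an involution, since applying it twice gives back $(d',i)$. Hence it is a bijection of $I_d$ that matches each monomial on the left with exactly one equal monomial on the right. This proves \eqref{eq:lemma for irred case}.

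\textbf{Step 3: derive the two consequences.} For the first consequence, substitute $d'=e+1$, so that $e$ runs from $0$ to $d-1$ and $i$ runs from $0$ to $e$. Pulling out the factors $p^{3(2s-1)}$ on the left and $p^{3(1-2s)}$ on the right turns the inner double sums into the two sides of \eqref{eq:lemma for irred case} with $d-1$ in place of $d$. A direct check of the prefactors then finishes the argument: dividing the left prefactor $p^{3(d+1)(1-s)}\cdot p^{3(2s-1)}$ by $p^{3(d-1)(1-s)}$, and the right prefactor $p^{3(d+1)s}\cdot p^{3(1-2s)}$ by $p^{3(d-1)s}$, both leave the same factor $p^{3}$. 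For $d=0$ both sides are empty sums, so this case is trivial.

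The second consequence is handled in the same way. Substitute $d'=e+2$ and apply \eqref{eq:lemma for irred case} with $d-2$ in place of $d$. The prefactors now leave the common factor $p^{6}$ on both sides.

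The main obstacle is only locating the bijection $(d',i)\mapsto(d-d'+i,i)$ in Step 2. Once it is written down, every remaining step is routine bookkeeping.
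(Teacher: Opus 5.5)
Your proof is correct and matches the paper's approach: the paper's chain of reindexings ($d'\mapsto d-d'$, then $i\mapsto i-d'$, then swapping $i$ and $d'$, then $i\mapsto d'-i$) composes to exactly your involution $(d',i)\mapsto(d-d'+i,\,i)$. The two consequences are derived just as you do, by applying the identity with $d-1$ (resp.\ $d-2$), shifting the outer index, and multiplying by $p^{3}$ (resp.\ $p^{6}$). One small correction: the inequality $i\le d-d'+i$ uses $d'\le d$, while $i\le d'$ gives the other bound $d-d'+i\le d$.
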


\begin{proof}
Replacing $d'$ with $d-d'$ and subsequently substituting the inner summation index $i$ with $i-d'$, the left-hand side of \eqref{eq:lemma for irred case} can be expanded as
\begin{align*}
p^{3d(1-s)}\sum_{d'=0}^d\sum_{i=0}^{d'}p^{3d'(2s-1)+(2-3s)i}
&=p^{3ds}\sum_{d'=0}^d\sum_{i=0}^{d-d'} p^{3d'(1-2s)+(2-3s)i}\\
&=p^{3ds}\sum_{d'=0}^d\sum_{i=d'}^{d} p^{(1-3s)d'+(2-3s)i}.
\end{align*}
Next, interchanging the dummy variables $i$ and $d'$ yields
\[
p^{3ds}\sum_{i=0}^d\sum_{d'=i}^{d} p^{(1-3s)i+(2-3s)d'}.
\]
Since the summation domain is $0 \leq i \leq d' \leq d$, changing the order of summation gives
\[
p^{3ds}\sum_{d'=0}^d\sum_{i=0}^{d'} p^{(1-3s)i+(2-3s)d'}.
\]
Finally, substituting $i \mapsto d'-i$ in the inner sum, we obtain
\[
p^{3ds}\sum_{d'=0}^d p^{3d'(1-2s)}\sum_{i=0}^{d'} p^{(3s-1)i},
\]
which yields \eqref{eq:lemma for irred case}.

The remaining two identities follow directly from \eqref{eq:lemma for irred case}. Applying the main identity with $d$ replaced by $d-1$, shifting the outer summation index via $d' \mapsto d'-1$, and multiplying both sides by $p^3$ yields the first identity. Similarly, replacing $d$ with $d-2$, shifting the outer index via $d' \mapsto d'-2$, and multiplying both sides by $p^6$ gives the second identity.
\end{proof}

\begin{theorem}\label{prop:local_ftnl_eqn}
Suppose that $E$ is irreducible at $p$. Then we have the local functional equation:
\[
    \tilde{L}_p(s,R)=\tilde{L}_p(1-s,R).
\]
\end{theorem}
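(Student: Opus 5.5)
The plan is to treat the two splitting types $\sigma_p(E)=(3)$ and $\sigma_p(E)=(1^3)$ separately, starting from the closed forms in Theorems \ref{thm:reformulation_for_unram} and \ref{thm:reformulation_for_ram}. In each case I will rewrite $\tilde{L}_p(s,R)$ as a linear combination of the three triangular double sums appearing in Lemma \ref{lem:double sum formula for nonsplit case}, each multiplied by a simple prefactor. Each block, together with its prefactor, should then be visibly invariant under $s\mapsto 1-s$ by the Lemma. The one check to carry along throughout is the elementary fact that $p^{3ds}$ becomes $p^{3d(1-s)}$ under $s\mapsto 1-s$, so that the substitution turns the left-hand side of \eqref{eq:lemma for irred case} into the right-hand side.

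\emph{The case $(3)$.} Reindexing $d'\mapsto d-d'$ in Theorem \ref{thm:reformulation_for_unram} gives the outer factor $p^{3d'(1-2s)}$ with $0\le d'\le d-1$. The three inner sums have lengths $d-d'$, $d-d'-1$ and $d-d'-2$, and are shifted by $1$, $p^{1-2s}$ and $p^{2-4s}$ respectively. I will first absorb the isolated term $p^{3d(1-2s)}$ as the $d'=d$ (equivalently, original $d'=0$) contribution of the first block.

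The real difficulty is the factor $1+p^{-s}+p^{-2s}$, which is not symmetric on its own. I would try to split it and redistribute it among the three blocks. The idea is to use $p^{-s}\cdot p^{1-2s}=p^{1-3s}$ and $p^{-2s}\cdot p^{2-4s}=p^{2-6s}$, together with the identity $p^{-3s}p^{3(1-2s)}=p^{3-9s}$ and similar ones, so that each resulting block takes the shape $p^{c(1-s)}\sum p^{3d'(2s-1)}\sum p^{(2-3s)i}$ or its mirror image. Concretely, the terms coming from $1$, $p^{-s}$ and $p^{-2s}$ should pair crosswise under $s\mapsto 1-s$:
\begin{itemize}
\item the ``$1$'' part of the first block should pair with the ``$p^{-2s}$'' part of the third block;
\item the middle block should pair with itself.
\end{itemize}
The three identities of Lemma \ref{lem:double sum formula for nonsplit case}, with $d$, $d-1$ and $d-2$ and the exponents $3(d+1)$ and $3(d+2)$, are exactly tailored to this pairing.

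If the direct pairing proves awkward, the fallback is the same bookkeeping the paper uses in Section \ref{sec:reformulation of local factors}. Write every monomial of $p^{-3ds}\tilde{L}_p(s,R)$ in the normal form $p^{A(1-2s)+B(2-3s)}$ and record it by the pair $(A,B)$. Since $s\mapsto 1-s$ sends $p^{3ds}p^{A(1-2s)+B(2-3s)}$ to a monomial of the same shape with explicitly computable new pair (an affine involution on $(A,B)$), it then suffices to check that the multiset of pairs, described row by row in $B$ as in the proof of Theorem \ref{thm:reformulation_for_unram}, is stable under this involution.

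\emph{The case $(1^3)$.} Here the factor $1+p^{-s}+p^{-2s}$ is absent, so the argument is simpler. For $3\mid S(R_p)$ the three inner sums have lengths $d'$, $d'-1$, $d'-1$. The main identity of the Lemma handles the first block together with the lone $p^{3d(1-2s)}$ term. The two blocks shifted by $p^{1-2s}$ and $p^{2-4s}$ have the same length and should be exchanged with each other under $s\mapsto 1-s$; this uses the first shifted identity of the Lemma, and the check is that $p^{3ds}p^{1-2s}$ and $p^{3ds}p^{2-4s}$ go to one another up to the factor $p^{3(d+1)(\cdot)}$ appearing there. For $S(R_p)=3d+1$ the same scheme applies with $p^{(3d+1)s}$: the two extra terms $p^{(3d+1)(1-2s)}$ and $p^{3d(1-2s)}$ pair with each other, and the blocks of lengths $d'$, $d'$, $d'-1$ pair as before.

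I expect the main obstacle to be the case $(3)$, specifically arranging the non-symmetric factor $1+p^{-s}+p^{-2s}$ so that the pieces match the hypotheses of Lemma \ref{lem:double sum formula for nonsplit case} exactly, including the boundary terms $d'=0,1$. If this fails, I will fall back on the $(A,B)$-multiset verification described above.
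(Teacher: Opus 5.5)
\textbf{There is a genuine gap in case $(3)$.} Your framework is the paper's: write $\tilde L_p(s,R)$ as a combination of the three triangular sums of Lemma~\ref{lem:double sum formula for nonsplit case}, each with a prefactor fixed by $s\mapsto 1-s$. For $(1^3)$ with $3\mid S(R_p)$ your pairing is exactly the paper's, giving $G_0+(p^{s-2}+p^{-1-s})G_1$ in the notation below. Case $(3)$ is the step you yourself flag, and there the pairing you propose is false. Put $S_k=\sum_{i=0}^{k}p^{(2-3s)i}$ (with $S_{-1}=0$) and
\[
G_0=p^{3d(1-s)}\sum_{d'=0}^{d}p^{3d'(2s-1)}S_{d'},\quad G_1=p^{3(d+1)(1-s)}\sum_{d'=1}^{d}p^{3d'(2s-1)}S_{d'-1},\quad G_2=p^{3(d+2)(1-s)}\sum_{d'=2}^{d}p^{3d'(2s-1)}S_{d'-2}.
\]
Each of these is invariant under $s\mapsto 1-s$ by the Lemma. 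Distributing $1+p^{-s}+p^{-2s}$ in Theorem~\ref{thm:reformulation_for_unram} gives
\[
\tilde L_p(s,R)=(1+p^{-s}+p^{-2s})G_0-(p^{-s}+p^{-2s})p^{3d(1-s)}+(p^{s-2}+p^{-2}+p^{-s-2})G_1+(p^{2s-4}+p^{s-4}+p^{-4})G_2 .
\]
\begin{itemize}
\item The ``$1$''-part of the first block (together with the lone term) is $G_0$, and the ``$p^{-2s}$''-part of the third block is $p^{-4}G_2$. Each of these is fixed by $s\mapsto1-s$, so they are not exchanged with each other.
\item The middle block does not pair with itself, because $p^{s-2}+p^{-2}+p^{-s-2}$ is sent to $p^{-1-s}+p^{-2}+p^{s-3}$.
\item The four leftover pieces (the $p^{-s}$- and $p^{-2s}$-parts of the first block, and the $1$- and $p^{-s}$-parts of the third block) carry non-symmetric coefficients. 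The Lemma alone cannot dispose of them.
\end{itemize}

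\textbf{The missing step} is to peel off the $i=0$ term, $S_k=1+p^{2-3s}S_{k-1}$; this is what the paper's ``extract the $i=0$ terms and absorb them into the $p^{2-4s}$-sums'' does. With $\Sigma=p^{3d(1-s)}\sum_{d'=1}^{d}p^{3d'(2s-1)}$ it gives
\[
G_0-p^{3d(1-s)}=\Sigma+p^{-1}G_1,\qquad G_2=pG_1-p^{4-3s}\Sigma .
\]
The single sums $\pm(p^{-s}+p^{-2s})\Sigma$ then cancel between the first and third blocks, and
\[
\tilde L_p(s,R)=G_0+p^{-4}G_2+Q(s)G_1,\qquad Q(s)=p^{s-2}+p^{-1-s}+p^{2s-3}+p^{-2}+p^{-1-2s}+p^{s-3}+p^{-2-s},
\]
with $Q(1-s)=Q(s)$. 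The crosswise pairings you were looking for do exist, but inside $Q$ and with the roles reversed. For example, the $p^{-2s}$-part of the first block ($p^{-1-2s}$) pairs with the $1$-part of the third block ($p^{2s-3}$).

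\textbf{A smaller slip for $S(R_p)=3d+1$.} The two extra terms do not pair with each other once $d\ge1$. After multiplying by $p^{(3d+1)s}$ they are $p^{(3d+1)(1-s)}$ and $p^{3d(1-s)+s}$, with images $p^{(3d+1)s}$ and $p^{3ds+1-s}$. They must instead be absorbed as the $d'=0$ terms of the two length-$d'$ blocks, which gives the paper's $\tilde L_p(s,R)=(p^{s}+p^{1-s})G_0+p^{-1}G_1$.

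\textbf{On the $(A,B)$ fallback.} It is sound in principle: the involution is $(A,B)\mapsto(3d-A-3B,B)$, so each row $B$ must be symmetric about $\tfrac{3d-3B}{2}$. However, the factor $1+p^{-s}+p^{-2s}$ superimposes three shifted, overlapping copies of the multiset of $P^{3d}_{(3)}(s)$. You have not described the resulting rows, so as written the fallback only restates what must be proved.
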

\begin{proof}
    \begin{enumerate}
        \item 
        By Theorem \ref{thm:reformulation_for_unram}, for a prime $p$ such that $\sigma_p(E)=(3)$ and $d=S(R_p)/3$, we have
\begin{align*}
    \tilde{L}_p(s,R)=&
    p^{3d(1-s)}+(1+p^{-s}+p^{-2s})\sum_{d'=1}^{d}p^{3d(1-s)-3d'(1-2s)}\left(\sum_{i=0}^{d'}p^{(2-3s)i}+p^{1-2s}\sum_{i=0}^{d'-1}p^{(2-3s)i}+p^{2-4s}\sum_{i=0}^{d'-2}p^{(2-3s)i}\right)\\
    =&p^{3d(1-s)}+p^{3d(1-s)}\sum_{d'=1}^{d}p^{3d'(2s-1)}\left(\sum_{i=0}^{d'}p^{(2-3s)i}+p^{1-2s}\sum_{i=0}^{d'-1}p^{(2-3s)i}+p^{2-4s}\sum_{i=-1}^{d'-2}p^{(2-3s)i}\right)\\
    &+p^{3d(1-s)}\sum_{d'=1}^{d}p^{3d'(2s-1)-s}\left(\sum_{i=1}^{d'}p^{(2-3s)i}+p^{1-2s}\sum_{i=0}^{d'-1}p^{(2-3s)i}+p^{2-4s}\sum_{i=-1}^{d'-2}p^{(2-3s)i}\right)\\
    &+p^{3d(1-s)}\sum_{d'=1}^{d}p^{3d'(2s-1)-2s}\left(\sum_{i=1}^{d'}p^{(2-3s)i}+p^{1-2s}\sum_{i=0}^{d'-1}p^{(2-3s)i}+p^{2-4s}\sum_{i=0}^{d'-2}p^{(2-3s)i}\right).
\end{align*}
Here, the shifted indices in the second equality arise from extracting the $(i=0)$-terms from the sums $\sum_{i=0}^{d'}p^{(2-3s)i}$ in the second and third groups. These extracted terms can be absorbed into the adjacent sums with coefficient $p^{2-4s}$, extending their lower limits from $0$ to ${-1}$.
Then, by reindexing $i$ so that all inner sums begin at $i=0$, we group the terms into three groups based on their upper limits ($d'$, $d'-1$, and $d'-2$):
\begin{align*}
    &p^{3d(1-s)}\sum_{d'=0}^dp^{3d'(2s-1)}\sum_{i=0}^{d'}p^{(2-3s)i}+p^{3(d+2)(1-s)-4}\sum_{d'=2}^{d}p^{3d'(2s-1)}\sum_{i=0}^{d'-2}p^{(2-3s)i}\\
    &+p^{3(d+1)(1-s)}\sum_{d'=1}^{d}p^{3d'(2s-1)}\sum_{i=0}^{d'-1}p^{(2-3s)i}(p^{-2+s}+p^{-1-s}+p^{-3+2s}+p^{-2}+p^{-1-2s}+p^{-3+s}+p^{-2-s}).
\end{align*}
Applying Lemma \ref{lem:double sum formula for nonsplit case} to the above expression yields
\begin{equation}\label{eq:equation for ftn eq for unram}
\begin{aligned}
    &p^{3ds}\sum_{d'=0}^dp^{3d'(1-2s)}\sum_{i=0}^{d'}p^{(3s-1)i}+p^{3(d+2)s-4}\sum_{d'=2}^{d}p^{3d'(1-2s)}\sum_{i=0}^{d'-2}p^{(3s-1)i}\\
    &+p^{3(d+1)s}\sum_{d'=1}^{d}p^{3d'(1-2s)}\sum_{i=0}^{d'-1}p^{(3s-1)i}(p^{-2+s}+p^{-1-s}+p^{-3+2s}+p^{-2}+p^{-1-2s}+p^{-3+s}+p^{-2-s}).
\end{aligned}
\end{equation}
We modify the last term of \eqref{eq:equation for ftn eq for unram} by distributing $p^{3s}$ from the factor $p^{3(d+1)s} = p^{3ds} \cdot p^{3s}$ into the polynomial, which splits the sum into three groups based on powers of $p^{s-1}$:
\begin{align*}
    &p^{3ds}\sum_{d'=1}^{d}p^{3d'(1-2s)}\sum_{i=0}^{d'-1}p^{(3s-1)i}(p^{4s-2}+p^{2s-1}+p^{5s-3}+p^{3s-2}+p^{s-1}+p^{4s-3}+p^{2s-2})\\
    &=p^{3ds}\sum_{d'=1}^{d}p^{3d'(1-2s)}\left(p^{4s-2}\sum_{i=0}^{d'-1}p^{(3s-1)i}+p^{2s-1}\sum_{i=0}^{d'-1}p^{(3s-1)i}\right)\\
    &\phantom{=}+p^{3ds}\sum_{d'=1}^{d}p^{3d'(1-2s)+s-1}\left(p^{4s-2}\sum_{i=0}^{d'-1}p^{(3s-1)i}+p^{2s-1}\sum_{i=0}^{d'-1}p^{(3s-1)i}+\sum_{i=0}^{d'-1}p^{(3s-1)i}\right)\\
    &\phantom{=}+p^{3ds}\sum_{d'=1}^{d}p^{3d'(1-2s)+2s-2}\left(p^{2s-1}\sum_{i=0}^{d'-1}p^{(3s-1)i}+\sum_{i=0}^{d'-1}p^{(3s-1)i}\right).
\end{align*}
Using the identity $p^{4s-2}\cdot p^{(3s-1)(d'-1)}=p^{(3s-1)d'+s-1}$, we extract the highest-index terms (i.e., $i=d'-1$) from the sums with the coefficient $p^{4s-2}$ in the first and second groups. These extracted terms perfectly absorb into the adjacent groups, extending their upper limits from $d'-1$ to $d'$. This transforms the expression into:
\begin{align*}
    &p^{3ds}\sum_{d'=1}^{d}p^{3d'(1-2s)}\sum_{i=0}^{d'-1}p^{(3s-1)i}(p^{4s-2}+p^{2s-1}+p^{5s-3}+p^{3s-2}+p^{s-1}+p^{4s-3}+p^{2s-2})\\
    &= p^{3ds}\sum_{d'=1}^{d}p^{3d'(1-2s)}\left(p^{2s-1}\sum_{i=0}^{d'-1}p^{(3s-1)i}+p^{4s-2}\sum_{i=0}^{d'-2}p^{(3s-1)i}\right)\\
    &\quad+p^{3ds}\sum_{d'=1}^{d}p^{3d'(1-2s)+s-1}\left(\sum_{i=0}^{d'}p^{(3s-1)i}+p^{2s-1}\sum_{i=0}^{d'-1}p^{(3s-1)i}+p^{4s-2}\sum_{i=0}^{d'-2}p^{(3s-1)i}\right)\\
    &\quad+p^{3ds}\sum_{d'=1}^{d}p^{3d'(1-2s)+2s-2}\left(\sum_{i=0}^{d'}p^{(3s-1)i}+p^{2s-1}\sum_{i=0}^{d'-1}p^{(3s-1)i}\right).
\end{align*}
Finally, adding the remaining terms from the first line of \eqref{eq:equation for ftn eq for unram} yields the missing terms, completing the factorization. 
Thus, we conclude that
\begin{align*}
    \tilde{L}_p(s,R)=& p^{3ds}+(1+p^{s-1}+p^{2s-2})\sum_{d'=1}^{d}p^{3ds-3d'(2s-1)}\left(\sum_{i=0}^{d'}p^{(3s-1)i}+p^{2s-1}\sum_{i=0}^{d'-1}p^{(3s-1)i}+p^{4s-2}\sum_{i=0}^{d'-2}p^{(3s-1)i}\right)\\
    =&\tilde{L}_p(1-s,R).
\end{align*}
        \item For a prime $p$ such that $\sigma_p(E)=(1^3)$, we have $3|S(R_p)$ or $3|(S(R_p)-1)$ by Section \ref{sec:reformulation for ram}.
        \begin{itemize}
            \item In the case that $3|S(R_p)$, let $d=S(R_p)/3$. 
            Then, by Theorem \ref{thm:reformulation_for_ram}, we have
           \begin{align*}
        \tilde{L}_p(s,R)=&p^{3d(1-s)}+\sum_{d'=1}^d p^{3d(1-s)-3d'(1-2s)}\Bigg(
        \sum_{i=0}^{d'}p^{(2-3s)i} + p^{1-2s}\sum_{i=0}^{d'-1}p^{(2-3s)i}+ p^{2-4s}\sum_{i=0}^{d'-1}p^{(2-3s)i}
        \Bigg)\\
        =& p^{3d(1-s)}\sum_{d'=0}^d p^{3d'(2s-1)}\sum_{i=0}^{d'}p^{(2-3s)i}+(p^{-2+s}+p^{-1-s})\Bigg(p^{3(d+1)(1-s)}\sum_{d'=1}^d p^{3d'(2s-1)}\sum_{i=0}^{d'-1}p^{(2-3s)i}\Bigg).
        \end{align*}
        By Lemma \ref{lem:double sum formula for nonsplit case}, this is equivalent to
        \begin{align*}
        &
        p^{3ds}\sum_{d'=0}^d p^{3d'(1-2s)}\sum_{i=0}^{d'}p^{(3s-1)i}+(p^{-1-s}+p^{-2+s})\Bigg(p^{3(d+1)s}\sum_{d'=1}^d p^{3d'(1-2s)}\sum_{i=0}^{d'-1}p^{(3s-1)i}\Bigg)\\
        &=p^{3ds}\sum_{d'=0}^d p^{3d'(1-2s)}\sum_{i=0}^{d'}p^{(3s-1)i}+(p^{2s-1}+p^{4s-2})\Bigg(p^{3ds}\sum_{d'=1}^d p^{3d'(1-2s)}\sum_{i=0}^{d'-1}p^{(3s-1)i}\Bigg)
        \\
        &=p^{3ds}+\sum_{d'=1}^d p^{3ds-3d'(2s-1)}\Bigg(\sum_{i=0}^{d'}p^{(3s-1)i}+p^{2s-1}\sum_{i=0}^{d'-1}p^{(3s-1)i}+p^{4s-2}\sum_{i=0}^{d'-1}p^{(3s-1)i}\Bigg).
        \end{align*}
        This yields the functional equation for $\tilde{L}_p(s,R)$.
        \item In the case that  $3|(S(R)-1)$, let $d=(S(R_p)-1)/3$. Then, by Theorem \ref{thm:reformulation_for_ram}, we have
        \begin{align*}
            \tilde{L}_p(s,R)=&\sum_{d'=0}^d p^{3d(1-s)-3d'(1-2s)+s}(\sum_{i=0}^{d'}p^{(2-3s)i}+p^{1-2s}\sum_{i=0}^{d'}p^{(2-3s)i}+p^{2-4s}\sum_{i=0}^{d'-1}p^{(2-3s)i})\\
            =&(p^s+p^{1-s})p^{3d(1-s)}\sum_{d'=0}^d p^{3d'(2s-1)}\sum_{i=0}^{d'}p^{(2-3s)i}+p^{3(d+1)(1-s)-1}
            \sum_{d'=1}^d p^{3d'(2s-1)}\sum_{i=0}^{d'-1}p^{(2-3s)i}.
        \end{align*}
        By Lemma \ref{lem:double sum formula for nonsplit case}, this is equivalent to
        \begin{align*}
            &(p^{1-s}+p^{s})p^{3ds}\sum_{d'=0}^d p^{3d'(1-2s)}\sum_{i=0}^{d'}p^{(3s-1)i}+p^{3(d+1)s-1}
            \sum_{d'=1}^d p^{3d'(1-2s)}\sum_{i=0}^{d'-1}p^{(3s-1)i}\\
            &=\sum_{d'=0}^d p^{3ds-3d'(2s-1)+1-s}(\sum_{i=0}^{d'}p^{(3s-1)i}+p^{2s-1}\sum_{i=0}^{d'}p^{(3s-1)i}+p^{4s-2}\sum_{i=0}^{d'-1}p^{(3s-1)i}).
        \end{align*}
        This yields the functional equation for  $\tilde{L}_p(s,R)$.\qedhere
        \end{itemize}
    \end{enumerate}
\end{proof}
\subsection{Local functional equation at $p$ where $E$ splits}
\begin{theorem}\label{thm:functional_equation_quasisplit}
Suppose that $E$ splits at $p$. 
\[
\tilde{L}_p(s,R)=\tilde{L}_p(1-s,R).
\]
\end{theorem}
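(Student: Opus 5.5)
The plan is to use the explicit formulas of Theorems \ref{thm:reformulation_for_unramlin}, \ref{thm:reformulation_for_ramlin} and \ref{thm:reformulation_for_split}. Each has the same shape: a ``boundary'' part
\[
p^{(a+b)(1-s)}+c_1\,p^{(a+b-1)(1-s)+s}\sum_{i=0}^{\frac a2-1}p^{(3s-1)i}+c_2\,p^{(b+\frac a2-1)(1-s)+as}\sum_{i=0}^{b-\frac a2-1}p^{(2s-1)i},
\]
plus $p^{(a+b)s}$ times a factor $(1-p^{-2s})$, $(1-p^{-s})$ or $(1-p^{-s})^2$ times a core double sum. In the $(1\ 2)$ case the factor $1+p^{-s}$ multiplies the boundary sums. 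In the $(1\ 1\ 1)$ case the factor $1-p^{-s}$ multiplies them, with a coefficient $3$ on the first.

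I will start with the base type $(1\ 1^2)$, $a=2m$, $b=m+k$, $T=3m+k$. Write the core as $Q=Q_{\rm I}+Q_{\rm III}$. Here $Q_{\rm I}=\sum_{d'<k}p^{(1-2s)d'}\sum_{i\le m}p^{(2-3s)i}$ is a product of two geometric series. The term $Q_{\rm III}=p^{k(1-2s)}\cdot(\text{the }(1^3)\text{ core of level }m)$ is already understood through Lemma \ref{lem:double sum formula for nonsplit case}.

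The key identity \eqref{eq:Key_equation} I aim for has the form $F(s)=F(1-s)$. The function $F$ is the whole expression, but I would organize it as an identity among three kinds of terms:
\begin{itemize}
\item $p^{Ts}(1-p^{-s})Q_{\rm III}$ transforms like the totally ramified local factor with $3d$ replaced by $3m$, up to the shift $p^{k(1-2s)}p^{ks}=p^{k(1-s)}$.
\item $p^{Ts}(1-p^{-s})Q_{\rm I}$ is a rational function in $p^{-s}$. I will sum the geometric series in closed form and compare it with its image under $s\mapsto 1-s$.
\item The discrepancy should be cancelled exactly by the boundary terms, which are also closed-form geometric sums.
\end{itemize}
Concretely, I would multiply everything by $(1-p^{1-2s})(1-p^{2-3s})$ to clear denominators. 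Then I will check that the resulting Laurent polynomial in $X=p^{-s}$ is invariant under $X\mapsto p^{-1}X^{-1}$, after the $p^{Ts}$ normalization. For the $Q_{\rm III}$ piece I will reuse the manipulations of Theorem \ref{prop:local_ftnl_eqn}(2), with $(p^{-2+s}+p^{-1-s})$ regrouping, and track only the extra boundary contributions at $d'=m$ where it meets $Q_{\rm I}$. The odd case $a=2b+1$ will be handled the same way using the $3d+1$ computation of Theorem \ref{prop:local_ftnl_eqn}.

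For $(1\ 2)$ and $(1\ 1\ 1)$ I would reduce to the base case algebraically rather than recompute. Their cores differ from the $(1\ 1^2)$ core only in the last inner sum $\sum_{i=0}^{d'-2}$ versus $\sum_{i=0}^{d'-1}$, i.e. by $p^{2-4s}p^{(2-3s)(d'-1)}$ terms. They also differ in the factors $(1-p^{-2s})=(1-p^{-s})(1+p^{-s})$ and $(1-p^{-s})^2$. I expect to write
\[
\tilde L^{(1\ 2)}_p=(1+p^{-s})\,\tilde L^{(1\ 1^2)}_p+\text{(correction)}-p^{-s}p^{(a+b)(1-s)},
\]
and similarly for $(1\ 1\ 1)$ with $(1-p^{-s})$. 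The plan is then to show each correction is individually invariant under $s\mapsto 1-s$, after combining it with the mismatch $(1\pm p^{-s})$ versus $(1\pm p^{s-1})$. That mismatch is $\pm p^{-s}(1-p^{2s-1})$, which should pair with the boundary sums $\sum p^{(3s-1)i}$, $\sum p^{(2s-1)i}$ and the extra $(2-3s)$-terms via the elementary identities behind Lemma \ref{lem:double sum formula for nonsplit case}. The $a=0$ cases are trivial one-variable geometric sums and I would dispose of them first.

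The main obstacle is establishing the base identity \eqref{eq:Key_equation}. The mixed structure of $Q_{\rm I}$, a rectangle of pairs $(A,i)$, glued to the triangular $Q_{\rm III}$ and to the boundary sums is not symmetric piece by piece. I would first try to phrase everything as a multiset of pairs $(A,B)$, as in Section \ref{sec:reformulation of local factors}, and find the involution $(A,B)\mapsto(A',B')$ induced by $s\mapsto1-s$. Under $p^{Ts}$ normalization this involution is an affine map. The first thing to check is whether it maps the multiset $\mathcal M_{\rm even}$ (together with the boundary monomials) to itself. If so, the proof becomes a bijective verification case by case in $i$, mirroring the Case 1–3 analysis above.
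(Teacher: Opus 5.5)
Your architecture matches the paper's. You take $(1\ 1^2)$ with even $a$ as the base, write the $(1\ 2)$ and $(1\ 1\ 1)$ factors as $(1\pm p^{-s})L^{a,b}_{(1\ 1^2)}(s)$ plus corrections, and treat $a=2b+1$ separately. Your expected decomposition is also right: the $(1\ 2)$ core differs from the $(1\ 1^2)$ core by exactly $p^{-Ts}\mathcal{C}(s)$, where $\mathcal{C}(s)=p^{(T-1)(1-s)+s}\sum_{i=0}^{m-1}p^{(3s-1)i}$. The gap is that the step carrying the proof is left open, and the target you set for it ($F(s)=F(1-s)$ for the base) is too weak. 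The base symmetry alone does not yield the reductions. For $(1\ 2)$ it gives
\[
\tilde L_p(s,R)-\tilde L_p(1-s,R)=p^{-s}(1-p^{2s-1})L^{a,b}_{(1\ 1^2)}(s)-p^{T(1-s)-s}+p^{Ts+s-1}-(1-p^{-2s})\mathcal{C}(s)+(1-p^{2s-2})\mathcal{C}(1-s).
\]
The corrections are not invariant one by one, so this vanishes only if you know $(1-p^{2s-1})L^{a,b}_{(1\ 1^2)}(s)$ explicitly. The same problem hits your plan for $Q_{\mathrm{III}}$. We have $p^{Ts}(1-p^{-s})Q_{\mathrm{III}}=(1-p^{-s})p^{k(1-s)}\bigl(\Phi_m(s)-p^{3m(1-s)}\bigr)$, where $\Phi_m$ is the $(1^3)$ factor with $S(R_p)=3m$. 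Since the multiplier is not symmetric, neither the symmetry of $\Phi_m$ nor the manipulations proving it transfer. The odd case has the same defect: its core is the $S=3b+1$ core times $(1-p^{-s})$.

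The missing idea is to multiply by $1-p^{2s-1}$, the factor by which $1-p^{-s}$ and $1-p^{s-1}$ differ. This telescopes $Q_{\mathrm{I}}$, $Q_{\mathrm{III}}$ and the boundary sums simultaneously, giving the paper's Key Equation
\[
(1-p^{2s-1})L^{a,b}_{(1\ 1^2)}(s)=p^{T(1-s)}-p^{Ts+2s-1}+(1-p^{-2s})p^{s}\mathcal{C}(s)-(1-p^{2s-2})p^{s}\mathcal{C}(1-s)=:W(s).
\]
One checks $W(1-s)=-p^{1-2s}W(s)$ term by term, which gives the base case. Multiplied by $p^{-s}$, it is exactly the vanishing of the displayed difference; for $(1\ 1\ 1)$ the difference is its negative. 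The odd case is likewise settled by a direct telescoping.

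Your proposed substitutes do not work as stated. First, under $s\mapsto 1-s$ the factor $(1-p^{1-2s})(1-p^{2-3s})$ becomes $(1-p^{2s-1})(1-p^{3s-1})$, which is not a monomial multiple of itself, so the cleared expression cannot be invariant. Second, $s\mapsto 1-s$ acts on $p^{Ts}p^{A(1-2s)+B(2-3s)}$ by $(A,B)\mapsto(T-3B-A,B)$, and this does not preserve $\mathcal{M}_{\mathrm{even}}$. Row $0$, which is $[0,T-1]$, goes to $[1,T]$, and the doubled segment $[2(m-i),T-3i-1]$ goes to $[1,T-2m-i]$. Because the core enters with the factor $1-p^{-s}$, i.e.\ minus its translate by $(2,-1)$, any combinatorial check must be a signed cancellation between adjacent rows. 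It cannot be a self-bijection of $\mathcal{M}_{\mathrm{even}}$ together with the boundary pairs.
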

\begin{proof}
Our strategy is to prove the functional equation simultaneously for the cases where $\sigma_p(E)=(1\ 2)$, $(1\ 1\ 1)$, and the case where $\sigma_p(E)=(1\ 1^2)$ with even $a(R_p)$, using algebraic reduction. 
The case where $\sigma_p(E)=(1\ 1^2)$ with odd $a(R_p)$ requires a separate but analogous argument.

\vspace{0.5em}
\noindent \textbf{Part 1: The Central Identity.} \\
    Let $a$ and $b$ be non-negative integers such that $a$ is even with $a \le 2b$. 
    We define $A = a/2$, $B = b - a/2$, and $T = a+b = 3A+B$. 
    Let $L_{(1\ 1^2)}^{a,b}(s)$ be the polynomial that yields the local factor of the case where $\sigma_p(E)=(1\ 1^2)$ upon substituting $a=a(R_p)$ and $b=b(R_p)$:
    \begin{align*}
    L_{(1\ 1^2)}^{a,b}(s) =& \; p^{T(1-s)} + p^{(T-1)(1-s)+s}\sum_{i=0}^{A-1}p^{(3s-1)i} + p^{(2A+B-1)(1-s)+2As}\sum_{i=0}^{B-1}p^{(2s-1)i} \\
    &+ (1-p^{-s})p^{Ts} \Bigg( \sum_{d'=0}^{B-1}p^{(1-2s)d'}\sum_{i=0}^{A}p^{(2-3s)i} \\
    &+ p^{B(1-2s)}\sum_{d'=1}^{A}p^{(3-6s)(A-d')} \Big( \sum_{i=0}^{d'}p^{(2-3s)i} + p^{1-2s}\sum_{i=0}^{d'-1}p^{(2-3s)i} + p^{2-4s}\sum_{i=0}^{d'-1}p^{(2-3s)i} \Big) \Bigg).
\end{align*}
We define the auxiliary polynomial $\mathcal{C}(s)$ as follows
\[ \mathcal{C}(s) = p^{T(1-s)-s} \sum_{d'=1}^A p^{d'(3s-1)} = p^{(T-1)(1-s)+s} \sum_{i=0}^{A-1} p^{i(3s-1)}. \]
Observe that under the substitution $s \leftrightarrow 1-s$, it transforms as
\[ \mathcal{C}(1-s) = p^{Ts+s-1} \sum_{d'=1}^A p^{d'(2-3s)}. \]

Applying telescoping cancellation to the double sums, we have
\begin{align*}
&(1-p^{2s-1})\sum_{d'=1}^Ap^{3d'(2s-1)}\Big(\sum_{i=0}^{d'}p^{(2-3s)i}+p^{1-2s}\sum_{i=0}^{d'-1}p^{(2-3s)i}+p^{2-4s}\sum_{i=0}^{d'-1}p^{(2-3s)i}\Big)\\
&=\sum_{d'=1}^{A}p^{d'(3s-1)}+p^{2s-1}-p^{(3A+1)(2s-1)}\sum_{d'=0}^{A}p^{d'(2-3s)}.
\end{align*}
Multiplying $L_{(1\ 1^2)}^{a,b}(s)$ by $(1-p^{2s-1})$ and evaluating the geometric series $(1-p^{2s-1})\sum_{i=0}^{B-1}p^{(2s-1)i} = 1-p^{B(2s-1)}$, we can substitute the telescoping identity above to simplify the expression.
Distributing and regrouping the terms, we deduce
\begin{align*}
    (1-p^{2s-1})L_{(1\ 1^2)}^{a,b}
    =&p^{T(1-s)}(1-p^{2s-1})+p^{(T-1)(1-s)+s}(1-p^{2s-1})\sum_{d'=0}^{A-1}p^{d'(3s-1)}\\
    &+ p^{(2A+B-1)(1-s)+2As}(1-p^{B(2s-1)})
        -(1-p^{-s})p^{Ts+2s-1}(1-p^{B(1-2s)})\sum_{i=0}^A p^{(2-3s)i}\\
        &+(1-p^{-s})p^{T(1-s)}
        \Big(\sum_{d'=1}^{A}p^{d'(3s-1)}+p^{2s-1}-p^{(3A+1)(2s-1)}\sum_{d'=0}^{A}p^{d'(2-3s)}\Big)\\
    =&p^{T(1-s)}(1-p^{2s-1})+p^{2A+(B-1)(1-s)}(1-p^{B(2s-1)})+(1-p^{-s})p^{(3A+B)(1-s)+2s-1}\\
    &-(1-p^{-s})\Big(p^{Ts}(p^{2s-1}-p^{(B-1)(1-2s)})+p^{(3A+1)s+(B-1)(1-s)}\Big)\sum_{d'=0}^{A}p^{d'(2-3s)}\\
    &+\Big( p^{T(1-s)}(p^{-s}-p^{s-1})
    +(1-p^{-s})p^{T(1-s)}\Big)\sum_{d'=1}^{A}p^{d'(3s-1)}\\
    =& p^{T(1-s)}+p^{2A+(B-1)(1-s)}-p^{2A+Bs+s-1}-p^{T(1-s)+s-1}\\
    &-(1-p^{-s})p^{Ts+2s-1}\sum_{d'=0}^{A}p^{d'(2-3s)}+ p^{T(1-s)}(1-p^{s-1})\sum_{d'=1}^{A}p^{d'(3s-1)}.
\end{align*}
To express the remaining summations in terms of $\mathcal{C}(s)$ and $\mathcal{C}(1-s)$, we algebraically extract the required coefficients.
By using the relation $T=3A+B$, we have
\begin{align*}
    &(1-p^{-s})p^{Ts+2s-1}\sum_{d'=0}^A p^{d'(2-3s)}=p^{(3A+B)s+2s-1}\Big((1-p^{2s-2})+p^{-s}(p^{3s-2} -1)\Big)\sum_{d'=0}^A p^{d'(2-3s)}\\
    &=p^{(3A+B)s+2s-1}(1-p^{2s-2})\sum_{d'=1}^A p^{d'(2-3s)}+p^{(3A+B)s+2s-1}(1-p^{2s-2})+p^{(3A+B)s+4s-3}(1-p^{(A+1)(2-3s)})\\
    &=p^{Ts+2s-1}(1-p^{2s-2})\sum_{d'=1}^A p^{d'(2-3s)}+p^{Ts+2s-1}-p^{2A+Bs+s-1}.
\end{align*}
Similarly, we have
\begin{align*}
    &p^{T(1-s)}(1-p^{s-1})\sum_{d'=1}^A p^{d'(3s-1)}=p^{(3A+B)(1-s)}\Big((1-p^{-2s}) +p^{-2s}(1-p^{3s-1})\Big)\sum_{d'=1}^A p^{d'(3s-1)}\\
    &=p^{T(1-s)}(1-p^{-2s})\sum_{d'=1}^A p^{d'(3s-1)}+p^{T(1-s)+s-1}(1-p^{A(3s-1)}).
\end{align*}
Substituting these relations back into the expanded expression, the residual cross-terms cancel out entirely. 
Consequently, we have the following \textbf{Key Equation}:
\begin{equation}\label{eq:Key_equation}
(1-p^{2s-1}) L_{(1\ 1^2)}^{a,b}(s) = p^{T(1-s)} - p^{Ts+2s-1} + (1-p^{-2s})p^s\mathcal{C}(s) - (1-p^{2s-2})p^s\mathcal{C}(1-s).
\end{equation}
We denote the right-hand side by $W(s)$. 
Here, $W(s)$ satisfies the following relation under the exchange $s\leftrightarrow 1-s$:
\begin{align*}
W(1-s) &= p^{Ts} - p^{T(1-s)+1-2s} + (1-p^{2s-2}) p^{1-s} \mathcal{C}(1-s) - (1-p^{-2s}) p^{1-s} \mathcal{C}(s) \\
&= -p^{1-2s} W(s).    
\end{align*}
Dividing $W(1-s)$ by $(1-p^{1-2s})$ yields that 
\[
L^{a,b}_{(1\ 1^2)}(1-s) = \frac{W(1-s)}{1-p^{1-2s}} = \frac{-p^{1-2s} W(s)}{-p^{1-2s}(1-p^{2s-1})} = \frac{W(s)}{1-p^{2s-1}} = L_{(1\ 1^2)}^{a,b}(s).
\]


Note that $L_{(1\ 1^2)}^{a,b}(s)$ and the auxiliary polynomial $\mathcal{C}(s)$ are polynomials in $p^{-s}$ and thus entire functions on $\mathbb{C}$. 
Consequently, any apparent poles introduced by division in the intermediate steps are strictly removable singularities.
By the principle of analytic continuation, verifying this formal algebraic identity guarantees the exact equality for all $s\in\mathbb{C}$.


Therefore, we have the functional equation for $\sigma_p(E)=(1\ 1^2)$ when $a(R_p)$ is even (so that $a(R_p)\leq 2b(R_p)$ by Section \ref{sec:reformulation for ramlin}): \[\tilde{L}_p(s,R) = L^{a(R_p),b(R_p)}_{(1\ 1^2)}(s) = L^{a(R_p),b(R_p)}_{(1\ 1^2)}(1-s)=\tilde{L}_p(1-s,R).\]

\vspace{0.5em}
\noindent \textbf{Part 2: Algebraic Reductions for the cases $\sigma_p(E)=(1\ 2)$ and $\sigma_p(E)=(1\ 1\ 1)$.} \\
Let $a=a(R_p)$ and $b=b(R_p)$. Note that $a$ is even and $a\leq 2b$ by Section \ref{sec:reformulation for unramlin} and Remark \ref{rmk:about a b in split}.
By comparing the explicit evaluations in the previous theorems, we express algebraically the local factors at $p$ with $\sigma_p(E)=(1\ 2)$ or $\sigma_p(E)=(1\ 1\ 1)$ in terms of $L_{(1\ 1^2)}^{a,b}(s)$ and $\mathcal{C}(s)$.
 \begin{itemize}
     \item  In the case where $\sigma_p(E)=(1\ 2)$, by Theorem \ref{thm:reformulation_for_unramlin}, we have
\begin{align*}
\tilde{L}_p(s,R)=&p^{T(1-s)}+(1+p^{-s})\Big(L_{(1\ 1^2)}^{a,b}(s) -p^{T(1-s)}-(1-p^{-s})\mathcal{C}(s)\Big)\\
=&(1+p^{-s})L_{(1\ 1^2)}^{a,b}(s) -p^{T(1-s)-s}-(1-p^{-2s})\mathcal{C}(s).
\end{align*}
To verify its functional equation, we evaluate $\tilde{L}_p(s,R) - \tilde{L}_p(1-s,R) = 0$.
Using the symmetry $L_{(1\ 1^2)}^{a,b}(s)=L_{(1\ 1^2)}^{a,b}(1-s)$, we have
\begin{align*}
&\tilde{L}_p(s,R) - \tilde{L}_p(1-s,R)\\
&= (p^{-s}-p^{s-1})L_{(1\ 1^2)}^{a,b}(s) -p^{T(1-s)-s}+p^{Ts+s-1}-(1-p^{-2s})\mathcal{C}(s)+(1-p^{2s-2})\mathcal{C}(1-s).
\end{align*}
Multiplying both sides of the \textbf{Key equation} in (\ref{eq:Key_equation}) by $p^{-s}$, we have 
\[
(p^{-s}-p^{s-1})L_{(1\ 1^2)}^{a,b}(s) -p^{T(1-s)-s}+p^{Ts+s-1}-(1-p^{-2s})\mathcal{C}(s)+(1-p^{2s-2})\mathcal{C}(1-s)=0.
\]
Hence, $\tilde{L}_p(s,R) = \tilde{L}_p(1-s,R)$ is satisfied.
\item
In the case where $\sigma_p(E)=(1\ 1\ 1)$, by Theorem \ref{thm:reformulation_for_split}, we have
\begin{align*}
    \tilde{L}_p(s,R)
    =&p^{T(1-s)}+(1-p^{-s})\Big(L_{(1\ 1^2)}^{a,b}(s) -p^{T(1-s)}+(1+p^{-s})\mathcal{C}(s)\Big)\\
    =&(1-p^{-s})L_{(1\ 1^2)}^{a,b}(s)+p^{T(1-s)-s}+(1-p^{-2s})\mathcal{C}(s).
\end{align*}
Evaluating its symmetry condition yields that
\begin{align*}
  &\tilde{L}_p(s,R) - \tilde{L}_p(1-s,R)\\
&= -(p^{-s}-p^{s-1})L_{(1\ 1^2)}^{a,b}(s) +p^{T(1-s)-s}-p^{Ts+s-1}+(1-p^{-2s})\mathcal{C}(s)-(1-p^{2s-2})\mathcal{C}(1-s).
\end{align*}
This equals $0$ by the above argument and hence $\tilde{L}_p(s,R)=\tilde{L}_p(1-s,R)$ is satisfied.
\end{itemize}

\vspace{0.5em}
\noindent \textbf{Part 3: Functional Equation for $\sigma_p(E)=(1\ 1^2)$ with odd $a(R_p)$.} \\
    In the case where $\sigma_p(E)=(1\ 1^2)$ and $a(R_p)$ is odd, we have $a(R_p)=2b(R_p)+1$ by Section \ref{sec:reformulation for ramlin}.
    By Theorem \ref{thm:reformulation_for_ramlin}, for $b=b(R_p)$, we have
        \begin{align*}
        \tilde{L}_p(s,R)=&p^{(3b+1)(1-s)}+p^{3b(1-s)+s}\sum_{i=0}^{b-1}p^{(3s-1)i}+(1-p^{-s})p^{(3b+1)s}\Bigg(p^{(3-6s)b}+\\
            &
            \sum_{d'=1}^{b}p^{(3-6s)(b-d')}\Big(\sum_{i=0}^{d'}p^{(2-3s)i}+p^{1-2s}\sum_{i=0}^{d'}p^{(2-3s)i}+p^{2-4s}\sum_{i=0}^{d'-1}p^{(2-3s)i}\Big)\Bigg).
        \end{align*}
 We consider the difference \begin{equation}\label{eq:intervene final}\tilde{L}_p(s,R)-p^{3b(1-s)+s}\sum_{i=0}^{b-1}p^{(3s-1)i}.\end{equation}
Applying telescoping cancellation to the double sums in \eqref{eq:intervene final} (as the factor $(1-p^{-s})$ induces a telescoping cancellation), we obtain 
\begin{align*}
\eqref{eq:intervene final}
    =&p^{(3b+1)s}\sum_{i=0}^b p^{(2-3s)i}+p^{3bs+1-s}\sum_{i=0}^{b}p^{(2-3s)i}+p^{(3b+1)(1-s)}\sum_{d'=0}^{b-1}p^{3d'(2s-1)}p^{(2-3s)d'}\\
    &-p^{(3b+1)(1-s)-(2-3s)}\Big(\sum_{d'=0}^{b}p^{(3d'-1)(2s-1)}+\sum_{d'=0}^{b-1}p^{3d'(2s-1)}+\sum_{d'=0}^{b-1}p^{(3d'+1)(2s-1)}\Big)\\
    =& p^{(3b+1)s}\sum_{i=0}^b p^{(2-3s)i}+p^{3bs+1-s}\sum_{i=0}^{b}p^{(2-3s)i}+p^{(3b+1)(1-s)}\sum_{i=0}^{b-1}p^{(3s-1)i}-p^{3b(1-s)}\sum_{i=0}^{3b}p^{(2s-1)i}.
\end{align*}
We then have
\begin{align*}
    \tilde{L}_p(s,R)=&\Big(p^{3b(1-s)+s}\sum_{i=0}^{b-1}p^{(3s-1)i}+p^{3bs+1-s}\sum_{i=0}^{b-1}p^{(2-3s)i}\Big)+\Big(p^{2b+1-s} +p^{2b+s}\Big)\\
    &+\Big(p^{(3b+1)s}\sum_{i=0}^{b-1}p^{(2-3s)i}+p^{(3b+1)(1-s)}\sum_{i=0}^{b-1}p^{(3s-1)i}\Big)-p^{3b(1-s)}\sum_{i=0}^{3b}p^{(2s-1)i}.
\end{align*}
Here, the last term has the following symmetry
\[p^{3b(1-s)}\sum_{i=0}^{3b}p^{(2s-1)i}=p^{3b(1-s)}\cdot p^{3b(2s-1)} \sum_{i=0}^{3b}p^{(1-2s)i}=p^{3bs}\sum_{i=0}^{3b}p^{(1-2s)i}.\]
This yields the functional equation for the case where $\sigma_p(E)=(1\ 1^2)$ with odd $a(R_p)$, completing the proof for all possible cases.\qedhere
\end{proof}        

\bibliographystyle{alpha}
\bibliography{References}

\end{document}